\begin{document}
\begin{CJK*}{GBK}{song}

\title[KdV limit of the Euler-Poisson system]
{KdV limit of the Euler-Poisson system}
\author[Y. Guo and X. Pu]
{Yan Guo\ \ \ \ \  Xueke Pu}  

\address{Yan Guo \newline
Division of Applied Mathematics, Brown University, Providence, RI 02912, USA.} \email{guoy@cfm.brown.edu}

\address{Xueke Pu \newline
Department of Mathematics, Chongqing University, Chongqing 400044, P.R.China;\newline
Division of Applied Mathematics, Brown University, Providence, RI 02912, USA.} \email{ puxueke@gmail.com}

\thanks{}
\subjclass[2000]{35Q53; 35Q35} \keywords{Korteweg-de Vries equation; Euler-Poisson equation; Singular limit}

\begin{abstract}
Consider the scaling $\varepsilon^{1/2}(x-Vt)\to x,\ \varepsilon^{3/2}t\to t$ in the Euler-Poisson system for ion-acoustic waves \eqref{equ1}. We establish that as $\varepsilon\to 0$, the solutions to such Euler-Poisson system converge globally in time to the solutions of the Korteweg-de Vries equation.
\end{abstract}

\maketitle \numberwithin{equation}{section}
\newtheorem{proposition}{Proposition}[section]
\newtheorem{theorem}{Theorem}[section]
\newtheorem{lemma}[theorem]{Lemma}
\newtheorem{remark}[theorem]{Remark}
\newtheorem{hypothesis}[theorem]{Hypothesis}
\newtheorem{definition}{Definition}[section]
\newtheorem{corollary}{Corollary}[section]
\newtheorem{assumption}{Assumption}[section]

\section{Introduction}
\setcounter{section}{1}\setcounter{equation}{0}
The Euler-Poisson system is an important two-fluid model for describing the dymamics of a plasma. Consider the one dimensional Euler-Poisson system for ion-acoustic waves
\begin{equation}\label{equ1}
\begin{split}
&\partial_tn+\partial_x(nu)=0\\
&\partial_tu+u\partial_x u+\frac{1}{M}\frac{T_i\partial_x n}{n}=-\frac{e}{M}\partial_x\phi\\
&\partial^2_x\phi=4\pi e(\bar{n}e^{e\phi/T_e}-n),
\end{split}
\end{equation}
where $n(t,x),u(t,x)$ and $\phi(t,x)$ are the density, velocity of the ions and the electric potential at time $t\geq 0$, position $x\in\Bbb R$ respectively. The parameters $e>0$ is the electron charge, $T_e$ is the temperature of the electron, $M$ and $T_i$ are the mass and temperature of the ions respectively. The electrons are described by the so called isothermal Boltzmann relation
$$n_e=\bar{n}e^{e\phi/T_e},$$
where $\bar{n}$ is the equilibrium densities of the electrons.

Both experimental and theoretic studies show that in the long-wavelength limit, Korteweg-de Vries equation would govern the dynamics of \eqref{equ1}. However, only formal derivations of such KdV limit are known \cite{WT66,SG69,Guo95,HS02}. In this paper, we close this gap by justifying this limit rigourously.

\subsection{Formal KdV expansion}
By the classical Gardner-Morikawa transformation \cite{SG69}
\begin{equation}\label{equ39}
\varepsilon^{1/2}(x-Vt)\to x,\ \ \ \ \varepsilon^{3/2}t\to t,
\end{equation}
in \eqref{equ1}, we obtain the parameterized equation
\begin{equation}\label{equ10}
\begin{split}
&\varepsilon\partial_tn-V\partial_xn+\partial_x(nu)=0,\\
&\varepsilon\partial_tu-V\partial_xu+u\partial_xu+\frac{T_i}{M}\frac{\partial_xn}{n} =-\frac{e}{M}\partial_x\phi,\\
&\varepsilon\partial_x^2\phi=4\pi e(\bar{n}e^{e\phi/T_e}-n),
\end{split}
\end{equation}
where $\varepsilon$ is the amplitude of the initial disturbance and is assumed to be small compared with unity and $V$ is a velocity parameter to be determined. We consider the following formal expansion
\begin{equation}\label{expan-formal}
\begin{split}
n&=\bar{n}(1+\varepsilon^{1} n^{(1)}+\varepsilon^{2} n^{(2)}+\varepsilon^{3} n^{(3)}+\varepsilon^{4} n^{(4)}+\cdots)\\
u&=\varepsilon^{1} u^{(1)}+\varepsilon^{2} u^{(2)}+\varepsilon^{3}u^{(3)}+\varepsilon^{4} u^{(4)}+\cdots\\
\phi&=\varepsilon^{1}\phi^{(1)}+\varepsilon^{2}\phi^{(2)} +\varepsilon^{3}\phi^{(3)}+\varepsilon^{4} \phi^{(4)}+\cdots.
\end{split}
\end{equation}
Plugging \eqref{expan-formal} into \eqref{equ1}, we get a power series of $\varepsilon$, whose coefficients depend on $(n^{(k)},u^{(k)},\phi^{(k)})$ for $k=1,2,\cdots$.

\bigskip

\emph{The coefficients of $\varepsilon^0$}: The coefficient of $\varepsilon^0$ is automatically 0.

\bigskip

\emph{The coefficients of $\varepsilon$}: Setting the coefficient of $\varepsilon$ to be 0, we obtain
\begin{subequations}\label{e1}
\begin{numcases}{(\mathcal S_0)}
-V\partial_xn^{(1)}+\partial_xu^{(1)}=0\\
-V\partial_xu^{(1)}+\frac{T_i}{M}\partial_xn^{(1)}=-\frac{e}{M}\partial_x\phi^{(1)}\\
0=\frac{e}{T_e}\phi^{(1)}-n^{(1)}.
\end{numcases}
\end{subequations}
Write this equation in the matrix form
\begin{equation}\label{constraint}
\left[
\begin{array}{ccc}
V  &  -1  & 0  \\
{T_i}/{M}   &  -V  &  e/M\\
1  &  0  &  -e/T_e
\end{array}
\right]
\left[
\begin{array}{ccc}
\partial_xn^{(1)}\\
\partial_xu^{(1)}\\
\partial_x\phi^{(1)}
\end{array}
\right]=\left[
\begin{array}{ccc}
0\\
0\\
0
\end{array}
\right].
\end{equation}
In order to get a nontrivial solution for $n^{(1)},u^{(1)}$ and $\phi^{(1)}$, we require the determinant of the coefficient matrix to vanish so that
\begin{equation}\label{equ7}
\frac{T_i+T_e}{M}={V^2}.
\end{equation}
That is to say, we can (and need only to) adjust the velocity $V$, which is independent of any physical parameters, to derive the KdV equation. Furthermore, \eqref{e1} enables us to assume the relation
\begin{subequations}\label{equ6}
\begin{numcases}{(\mathcal L_1)}
u^{(1)}=Vn^{(1)},\label{equ6-1}\\
\phi^{(1)}=\frac{T_e}{e}n^{(1)},\label{equ6-2}
\end{numcases}
\end{subequations}
which makes \eqref{e1} valid. Only $n^{(1)}$ still needs to be determined.
\bigskip

\emph{The coefficients of $\varepsilon^2$ and the KdV equation for $n^{(1)}$}: Setting the coefficient of $\varepsilon^2$ to be 0, we obtain
\begin{subequations}\label{e2}
\begin{numcases}{(\mathcal S_1)}
\partial_tn^{(1)}-V\partial_xn^{(2)}+\partial_xu^{(2)} +\partial_x(n^{(1)}u^{(1)})=0, \label{e2-1}\\
\partial_tu^{(1)}-V\partial_xu^{(2)}+u^{(1)}\partial_xu^{(1)}\nonumber\\
\ \ \ \ \ \ \ \ \ \ \ \ +\frac{T_i}{M}(\partial_xn^{(2)}-n^{(1)}\partial_xn^{(1)}) =-\frac{e}{M}\partial_x\phi^{(2)},\label{e2-2}\\
\partial_x^2\phi^{(1)}=4\pi e\bar{n}(\frac{e}{T_e}\phi^{(2)}+\frac12((\frac{e}{T_e})\phi^{(1)})^2-n^{(2)}). \label{e2-3}
\end{numcases}
\end{subequations}
Differentiating \eqref{e2-3} with respect to $x$, multiplying \eqref{e2-1} with $V$, and \eqref{e2-3} with ${T_e}/{(4\pi e\bar{n}M)}$ respectively, and then adding them to \eqref{e2-2} together, we deduce that $n^{(1)}$ satisfies the Korteweg-de Vries equation
\begin{equation}\label{kdv}
\partial_tn^{(1)}+Vn^{(1)}\partial_xn^{(1)}+\frac{1}{2}\frac{T_e}{4\pi \bar{n}eMV}\partial_x^3n^{(1)}=0,
\end{equation}
where we have used the relation \eqref{equ6} and \eqref{equ7}, under which all the coefficients of $n^{(2)},u^{(2)}$ and $\phi^{(2)}$ vanish. We also note that the system \eqref{kdv} and \eqref{equ6} for $(n^{(1)},u^{(1)},\phi^{(1)})$ are self contained, which do not depend on $(n^{(j)},u^{(j)},\phi^{(j)})$ for $j\geq 2$. The above formal derivation for the case $T_i=0$ can be found in \cite{SG69}; while the derivation for the case $T_i>0$ is new.

Now we want to find out the equations satisfied by $(n^{(2)},u^{(2)},\phi^{(2)})$ assuming that $(n^{(1)},u^{(1)},\phi^{(1)})$ is known (solved form \eqref{kdv} and \eqref{equ6}). From \eqref{e2}, we can express $(n^{(2)},u^{(2)},\phi^{(2)})$ in terms of $(n^{(1)},u^{(1)},\phi^{(1)})$:
\begin{subequations}\label{equ16}
\begin{numcases}{(\mathcal L_2)}
\phi^{(2)}=\frac{T_e}{e}(n^{(2)}+h^{(1)}),\ \ h^{(1)}=\frac{1}{4\pi e\bar{n}}\partial_x^2\phi^{(1)}-\frac12{(\frac{e}{T_e}\phi^{(1)})^2},\label{equ16-1}\\
u^{(2)}=Vn^{(2)}+{g}^{(1)},\ \ \ \ \ \  {g}^{(1)}=\int^{x}\mathfrak{g}^{(1)}(t,\xi)d\xi,\label{equ16-2}\\
\ \ \ \ \ \ \ \ \ \ \ \ \ \ \ \ \ \ \ \ \ \ \ \ \ \ \ \ \ \ \ \ \mathfrak{g}^{(1)}=-\partial_tn^{(1)}+\partial_x(n^{(1)}u^{(1)}),\nonumber
\end{numcases}
\end{subequations}
which make \eqref{e2} valid. Only $n^{(2)}$ needs to be determined now.
\bigskip

\emph{The coefficients of $\varepsilon^3$ and the linearized KdV equation for $n^{(2)}$}: Setting the coefficient of $\varepsilon^3$ to be zero, we obtain
\begin{subequations}\label{e3}
\begin{numcases}{(\mathcal S_2)}
\partial_tn^{(2)}-V\partial_xn^{(3)}+\partial_xu^{(3)} +\partial_x(n^{(1)}u^{(2)}+n^{(2)}u^{(1)})=0,\label{e3-1}\\
\partial_tu^{(2)}-V\partial_xu^{(3)}+\partial_x(u^{(1)}u^{(2)}) +\frac{T_i}{M}\partial_xn^{(3)}\nonumber \\ \ \ \ \ \ \ \ \ \ \ \ \ -\frac{T_i}{M}[\partial_x(n^{(1)}n^{(2)}) -(n^{(1)})^2\partial_xn^{(1)}]=-\frac{e}{M}\partial_x\phi^{(3)},\label{e3-2}\\
\partial_x^2\phi^{(2)}=4\pi e\bar{n}[\frac{e}{T_e}\phi^{(3)}+(\frac{e}{T_e})^2\phi^{(1)}\phi^{(2)} +\frac1{3!}(\frac{e}{T_e}\phi^{(1)})^3-n^{(3)}].\label{e3-3}
\end{numcases}
\end{subequations}
Differentiating \eqref{e3-3} with respect to $x$, multiplying \eqref{e3-1} with $V$, and \eqref{e3-3} with ${T_e}/{(4\pi e\bar{n}M)}$ respectively, and then adding them to \eqref{e3-2} together, we deduce that $n^{(2)}$ satisfies the linearized inhomogeneous Korteweg-de Vries equation
\begin{equation}\label{e4}
\begin{split}
\partial_tn^{(2)}+V\partial_x(n^{(1)}n^{(2)})+\frac{1}{2}\frac{T_e}{4\pi \bar{n}eMV}\partial_x^3n^{(2)}=G^{(1)},
\end{split}
\end{equation}
where we have used \eqref{equ16} and $G^{(1)}=G^{(1)}(n^{(1)})$ depends only on $n^{(1)}$. Again, the system \eqref{e4} and \eqref{equ16} for $(n^{(2)},u^{(2)},\phi^{(2)})$ are self contained and do not depend on $(n^{(j)},u^{(j)},\phi^{(j)})$ for $j\geq 3$.

\bigskip

\emph{The coefficients of $\varepsilon^{k+1}$ and the linearized KdV equation for $n^{(k)}$}: Let $k\geq3$ be an integer. Recalling that in the $k^{th}$ step, by setting the coefficient of $\varepsilon^k$ to be 0, we obtain an evolution system $(\mathcal S_{k-1})$ for $(n^{(k-1)},u^{(k-1)},\phi^{(k-1)})$, from which we obtain
\begin{subequations}\label{relation}
\begin{numcases}{(\mathcal L_k)}
\phi^{(k)}=\frac{T_e}{e}(n^{(k)}+h^{(k-1)}),\ for\ some\ h^{(k-1)}\ depending \label{relation-1}\\ \ \ \ \ \ \ \ \ \ only\ on\ (n^{j},u^{j},\phi^{j})\ for\ 1\leq j\leq k-1,\nonumber\\
u^{(k)}=Vn^{(k)}+{g}^{(k-1)},\ {g}^{(k-1)}=\int^{x}\mathfrak{g}^{(k-1)}(t,\xi)d\xi,\ for\ some \label{relation-2}\\ \ \ \ \ \ \ \ \ \ \mathfrak{g}^{(k-1)}\ depending\ only\ on\ (n^{j},u^{j},\phi^{j})\ for\ 1\leq j\leq k-1.\nonumber
\end{numcases}
\end{subequations}
This relation makes $(\mathcal S_{k-1})$ valid, and we need only to determine $n^{(k)}$. By setting the coefficient of $\varepsilon^{k+1}$ to be 0, we obtain an evolution system $(\mathcal S_k)$ for $(n^{(k)},u^{(k)},\phi^{(k)})$. By the same procedure that leads to \eqref{e4}, we obtain the linearized inhomogeneous Korteweg-de Vries equation for $n^{(k)}$
\begin{equation}\label{linearized}
\begin{split}
\partial_tn^{(k)}+V\partial_x(n^{(1)}n^{(k)})+\frac{1}{2}\frac{T_e}{4\pi \bar{n}eMV}\partial_x^3n^{(k)}=G^{(k-1)},\ \ k\geq3,
\end{split}
\end{equation}
where $G^{(k-1)}$ depends only on $n^{(1)},\cdots,n^{k-1}$, which are ``known" from the first $(k-1)^{th}$ steps. Again, the system \eqref{relation} and \eqref{linearized} for $(n^{(k)},u^{(k)},\phi^{(k)})$ are self contained, which do not depend on $(n^{(j)},u^{(j)},\phi^{(j)})$ for $j\geq k+1$.

For the solvability of $(n^{(k)},u^{(k)},\phi^{(k)})$ for $k\geq 1$, we have the following two theorems.
\begin{theorem}\label{thm2}
Let $\tilde s_1\geq2$ be a sufficiently large integer. Then for any given initial data $n^{(1)}_0\in H^{\tilde s_1}(\Bbb R)$, there exists $\tau_*>0$ such that the initial value problem \eqref{kdv} and \eqref{equ6} has a unique solution
$$(n^{(1)},u^{(1)},\phi^{(1)})\in L^{\infty}(-\tau_*,\tau_*; H^{\tilde s_1}(\Bbb R))$$
with initial data $(n^{(1)}_0,Vn^{(1)}_0,T_en^{(1)}_0/e)$. Furthermore, by using the conservation laws of the KdV equation, we can extend the solution to any time interval $[-\tau,\tau]$.
\end{theorem}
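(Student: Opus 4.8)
The plan is to notice that the system \eqref{equ6} consists of two pointwise (non-differential) identities, so that the initial value problem \eqref{kdv}--\eqref{equ6} collapses to a scalar Cauchy problem for the KdV equation. Concretely, one seeks $n^{(1)}$ solving \eqref{kdv} with datum $n^{(1)}_0\in H^{\tilde s_1}$, and then \emph{defines} $u^{(1)}:=Vn^{(1)}$ and $\phi^{(1)}:=\tfrac{T_e}{e}n^{(1)}$; these lie in the same space $L^{\infty}(-\tau_*,\tau_*;H^{\tilde s_1})$, match the prescribed data $(n^{(1)}_0,Vn^{(1)}_0,T_en^{(1)}_0/e)$, and --- using the dispersion relation \eqref{equ7} --- make $(\mathcal S_0)$ (equivalently the algebraic system \eqref{constraint}) hold identically, as a one-line substitution shows. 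After the linear rescaling in $t,x$ and in amplitude that normalizes its two coefficients, \eqref{kdv} is the standard equation $\partial_tw+w\partial_xw+\partial_x^3w=0$, so it remains only to prove local existence, uniqueness, and global extension in $H^{\tilde s_1}$.

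For the local theory I would use the classical energy method. Introduce a parabolic regularization, e.g.\ $\partial_tw_\delta+w_\delta\partial_xw_\delta+\partial_x^3w_\delta+\delta\,\partial_x^4w_\delta=0$ with $\delta>0$, which is easily solved globally for each fixed $\delta$ (a Friedrichs mollification of the nonlinearity combined with a contraction argument would serve equally well). Applying $\partial_x^\alpha$ for $0\le\alpha\le\tilde s_1$, pairing with $\partial_x^\alpha w_\delta$ in $L^2$, and using that $\partial_x^3$ is skew-adjoint while the regularizing term has a favourable sign, the only term left to control is $\int\partial_x^\alpha(w_\delta\partial_xw_\delta)\,\partial_x^\alpha w_\delta\,dx$; the Kato--Ponce commutator estimate together with the embedding $H^{\tilde s_1}\hookrightarrow W^{1,\infty}$, valid since $\tilde s_1\ge2>\tfrac32$, gives
\[
\frac{d}{dt}\|w_\delta(t)\|_{H^{\tilde s_1}}^2\le C\,\|w_\delta(t)\|_{H^{\tilde s_1}}^3,
\]
with $C$ independent of $\delta$. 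Hence a $\delta$-uniform bound on an interval $[-\tau_*,\tau_*]$ with $\tau_*\sim\|n^{(1)}_0\|_{H^{\tilde s_1}}^{-1}$, and a standard limiting argument as $\delta\to0$ (the equation upgrading weak convergence in lower norms, a Bona--Smith-type regularization recovering the top norm) yields $w\in L^{\infty}(-\tau_*,\tau_*;H^{\tilde s_1})$. Uniqueness and continuous dependence follow from an $L^2$ estimate for the difference of two solutions, again using $H^{\tilde s_1}\hookrightarrow W^{1,\infty}$; the assertion for negative times follows from the symmetry $(t,x)\mapsto(-t,-x)$ of \eqref{kdv}.

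For the global extension I would invoke the conserved quantities of the KdV hierarchy: besides $\int w\,dx$ and $\|w\|_{L^2}^2$, the equation admits for each integer $k\ge1$ a polynomial conservation law whose leading density is $(\partial_x^kw)^2$. Arguing by induction on $k$ and absorbing the subleading terms with the Gagliardo--Nirenberg inequality, one bounds $\|w(t)\|_{H^k}$ for every $k\le\tilde s_1$ by a constant depending only on $\|n^{(1)}_0\|_{H^{\tilde s_1}}$ and not on $t$. Since the local existence time depends only on the $H^{\tilde s_1}$ norm of the data, the usual continuation argument then extends $w$, and hence $(n^{(1)},u^{(1)},\phi^{(1)})$, to an arbitrary time interval $[-\tau,\tau]$.

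I expect the only genuinely delicate points to be the $\delta$-uniform high-order energy estimate --- the commutator bookkeeping that closes the quadratic term in $H^{\tilde s_1}$ --- and, for the global statement, pinning down which members of the KdV hierarchy are needed to control each $H^k$; both are classical, but together they carry essentially all of the analytic content of the statement.
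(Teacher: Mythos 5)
The paper does not actually prove Theorem~\ref{thm2}: it declares the result ``classical'' and points to \cite{KPV93}, \cite{Miura76}, \cite{KdV}. Your reduction step — observing that \eqref{equ6} is purely algebraic so the system collapses to the scalar KdV Cauchy problem for $n^{(1)}$ in $H^{\tilde s_1}$, after which $u^{(1)}$ and $\phi^{(1)}$ are read off — is exactly what the paper implicitly uses and is correct. For the scalar problem, your route (parabolic regularization, $\delta$-uniform $H^{\tilde s_1}$ energy estimates via Kato--Ponce, Bona--Smith limiting for the top norm, $(t,x)\mapsto(-t,-x)$ symmetry for negative times, and global extension by the polynomial conservation laws of the KdV hierarchy) is a standard and valid proof, consistent with the global-extension mechanism the theorem statement itself invokes. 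It differs from the cited \cite{KPV93}, whose local theory is built on the dispersive (local smoothing/maximal function) estimates and a contraction argument rather than on energy methods; the energy approach is less sharp in regularity but entirely adequate here since $\tilde s_1$ is taken large, and both routes reach the same conclusion once the conservation laws are brought in. The only places where your sketch would need fleshing out in a full write-up are the commutator bookkeeping that closes the $H^{\tilde s_1}$ estimate uniformly in $\delta$, and the induction through the KdV hierarchy using Gagliardo--Nirenberg to absorb lower-order densities — but you correctly flag these as the real analytic content, and both are standard.
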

This result is classical for the KdV equation, see for example \cite{KPV93}. See also \cite{Miura76,KdV} for more details on KdV equation.
\begin{theorem}\label{thm3}
Let $k\geq 2$ and $\tilde s_k\leq \tilde s_1-3(k-1)$ be a sufficiently large integer. Then for any $\tau>0$ and any given initial data $(n^{(k)}_0,u^{(k)}_0,\phi^{(k)}_0)\in H^{\tilde s_k}(\Bbb R)$,  the initial value problem \eqref{linearized} and \eqref{relation} with initial data $(n^{(k)}_0,u^{(k)}_0,\phi^{(k)}_0)$ satisfying \eqref{relation} has a unique solution
$$(n^{(k)},u^{(k)},\phi^{(k)})\in L^{\infty}(-\tau,\tau; H^{\tilde s_k}(\Bbb R)).$$
\end{theorem}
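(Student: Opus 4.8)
The plan is to exploit the self-contained, \emph{linear} character of \eqref{linearized}--\eqref{relation} emphasized after \eqref{linearized}, proceeding by induction on $k$. When step $k\ge 2$ is reached, the lower-order profiles $n^{(1)}$ (furnished by Theorem \ref{thm2}, extended to the prescribed interval $[-\tau,\tau]$) and $n^{(2)},\dots,n^{(k-1)}$ (furnished by the previous steps) are available, with $n^{(j)}\in L^\infty(-\tau,\tau;H^{\tilde s_j})$ and $\tilde s_j=\tilde s_1-3(j-1)$. Consequently $G^{(k-1)}$, $h^{(k-1)}$ and $\mathfrak g^{(k-1)}$ are \emph{known} functions: each is a differential polynomial of order at most three in $n^{(1)},\dots,n^{(k-1)}$ — read off from the same linear combination of $(\mathcal S_{k-1})$ that produced \eqref{kdv} and \eqref{e4}, and from \eqref{relation} — hence each belongs to $L^\infty(-\tau,\tau;H^{\tilde s_k})$; this order-three loss per stage is precisely the source of the hypothesis $\tilde s_k\le\tilde s_1-3(k-1)$. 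Since the compatibility condition \eqref{relation} at $t=0$ fixes $u^{(k)}_0$ and $\phi^{(k)}_0$ in terms of $n^{(k)}_0$, the whole problem reduces to solving the single linear equation \eqref{linearized} for $n^{(k)}$ with datum $n^{(k)}_0\in H^{\tilde s_k}$, after which $u^{(k)}$ and $\phi^{(k)}$ are merely \emph{defined} via \eqref{relation}.

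For \eqref{linearized} I would run the classical energy method. Write $\alpha=\tfrac12 T_e/(4\pi\bar n e M V)>0$ and $s=\tilde s_k$, apply $\partial_x^s$ to \eqref{linearized}, and pair with $\partial_x^s n^{(k)}$ in $L^2$. The dispersive term contributes $\alpha\int(\partial_x^s n^{(k)})(\partial_x^{s+3}n^{(k)})\,dx=0$ since $\partial_x^3$ is skew-adjoint; the variable-coefficient transport term $V\partial_x(n^{(1)}n^{(k)})$ is handled by the Kato--Ponce commutator estimate together with the integration by parts $\int n^{(1)}(\partial_x^{s+1}n^{(k)})(\partial_x^s n^{(k)})\,dx=-\tfrac12\int(\partial_x n^{(1)})(\partial_x^s n^{(k)})^2\,dx$, so that its contribution is $\lesssim\|n^{(1)}\|_{H^{s+1}}\|n^{(k)}\|_{H^s}^2$; and the source is estimated by Cauchy--Schwarz. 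This gives
\begin{equation*}
\frac{d}{dt}\|n^{(k)}(t)\|_{H^s}^2\le C\big(\|n^{(1)}(t)\|_{H^{s+1}}\big)\,\|n^{(k)}(t)\|_{H^s}^2+2\|G^{(k-1)}(t)\|_{H^s}\,\|n^{(k)}(t)\|_{H^s}.
\end{equation*}
Because $\|n^{(1)}(t)\|_{H^{s+1}}\le\|n^{(1)}(t)\|_{H^{\tilde s_1}}$ and $\|G^{(k-1)}(t)\|_{H^s}$ are already bounded on all of $[-\tau,\tau]$, Gronwall's inequality yields an a priori bound for $\|n^{(k)}(t)\|_{H^s}$ on the \emph{entire} interval $[-\tau,\tau]$; there is no smallness or finite-time-blow-up issue, which is why the lifespan is the whole prescribed $[-\tau,\tau]$.

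To convert this a priori bound into an actual solution I would use a standard approximation scheme — parabolic regularization of \eqref{linearized} (adding $\mp\delta\,\partial_x^4 n^{(k)}_\delta$, sign depending on the time direction), or a spectral Galerkin truncation, or a Bona--Smith-type construction — for which short-time existence is immediate; the energy identity above is uniform in the regularization parameter (the extra fourth-order term only helps), so the approximations extend to $[-\tau,\tau]$ with a uniform bound in $L^\infty(-\tau,\tau;H^{\tilde s_k})$ and a weak-$*$ limit solves \eqref{linearized} there. Uniqueness in $L^\infty(-\tau,\tau;H^{\tilde s_k})$ is automatic from linearity: the difference of two solutions solves the homogeneous equation, so the same energy estimate with $G^{(k-1)}\equiv0$ and Gronwall force it to vanish. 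Finally set $\phi^{(k)}$ and $u^{(k)}$ by \eqref{relation}: $\phi^{(k)}\in L^\infty(-\tau,\tau;H^{\tilde s_k})$ trivially, and $u^{(k)}=Vn^{(k)}+g^{(k-1)}$ with $g^{(k-1)}=\int^x\mathfrak g^{(k-1)}(t,\xi)\,d\xi$ lies in $L^\infty(-\tau,\tau;H^{\tilde s_k})$ because $\mathfrak g^{(k-1)}$ is in fact an exact $x$-derivative of an $H^{\tilde s_k+1}$-function; one checks this at the first step, where \eqref{kdv} gives $\mathfrak g^{(1)}=\partial_x\big(c_1(n^{(1)})^2+c_2\,\partial_x^2 n^{(1)}\big)$, and the structure propagates through the recursion.

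The mechanics above are routine linear PDE theory, and I expect no serious analytic obstacle there. The real content — and the only place the argument can break — is the algebraic bookkeeping compressed into the first paragraph: one must check, at each stage of the recursion $(\mathcal S_{k-1})\to(\mathcal L_k)\to(\mathcal S_k)$, that the prescribed linear combination leaves a source $G^{(k-1)}$ which is a differential polynomial of order \emph{at most} three in the already-known profiles (so that $\tilde s_k\le\tilde s_1-3(k-1)$ is the correct and sharp loss), and that $G^{(k-1)}$ and $\mathfrak g^{(k-1)}$ inherit the exact-derivative (equivalently, vanishing-spatial-mean) structure that makes the antiderivative in \eqref{relation-2} legitimate. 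These are the points that deserve careful verification.
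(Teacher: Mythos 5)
Your proof follows essentially the same route as the paper's appendix argument: an $L^2$-based energy estimate for the linearized KdV equation \eqref{linearized} (the dispersive term dropping out by skew-adjointness, the transport term controlled after one integration by parts), closed by Gronwall on the full interval and iterated by induction on $k$. The paper states only the zeroth-order a priori estimate and asserts "higher order estimate is similar; the general case can be proved by induction," so your write-up is a more complete version of the same argument — in particular your attention to the derivative-loss bookkeeping behind $\tilde s_k\le\tilde s_1-3(k-1)$ and to the exact-derivative structure of $\mathfrak g^{(k-1)}$ (needed so that the antiderivative in \eqref{relation-2} lands back in $L^2$) addresses points the paper leaves implicit.
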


The proof of Theorem \ref{thm3} is standard. See Appendix. In the following, we will assume that these solutions $(n^{(k)},u^{(k)},\phi^{(k)})$ for $1\leq k\leq 4$ are sufficiently smooth. The optimality of $\tilde s_k$ will not be addressed in this paper.

\subsection{Main result}\label{sect-rem}
To show that $n^{(1)}$ converges to a solution of the KdV equation as $\varepsilon\to 0$, we must make the above procedure rigorous. Let $(n,u,\phi)$ be a solution of the scaled system \eqref{equ10} of the following expansion
\begin{equation}\label{expan}
\begin{split}
n&=\bar{n}(1+\varepsilon^{1} n^{(1)}+\varepsilon^{2} n^{(2)}+\varepsilon^{3} n^{(3)}+\varepsilon^{4} n^{(4)}+\varepsilon^3n^{\varepsilon}_R)\\
u&=\varepsilon^{1} u^{(1)}+\varepsilon^{2} u^{(2)}+\varepsilon^{3}u^{(3)}+\varepsilon^{4} u^{(4)}+\varepsilon^3u^{\varepsilon}_R\\
\phi&=\varepsilon^{1}\phi^{(1)}+\varepsilon^{2}\phi^{(2)} +\varepsilon^{3}\phi^{(3)}+\varepsilon^{4} \phi^{(4)}+\varepsilon^3\phi^{\varepsilon}_R,
\end{split}
\end{equation}
where $(n^{(1)},u^{(1)},\phi^{(1)})$ satisfies \eqref{equ6} and \eqref{kdv}, $(n^{(k)},u^{(k)},\phi^{(k)})$ satisfies \eqref{relation} and \eqref{linearized} for $2\leq k\leq 4$, and $(n^{\varepsilon}_R,u^{\varepsilon}_R,\phi^{\varepsilon}_R)$ is the remainder.

In the following, we derive the remainder system satisfied by $(n_R^{\varepsilon},u_R^{\varepsilon},\phi_R^{\varepsilon})$. To simplify the expression, we denote
\begin{equation*}
\begin{split}
\tilde n=n^{(1)}+\varepsilon n^{(2)}+\varepsilon^2n^{(3)}+\varepsilon^3n^{(4)}, \ \ \ \tilde u=u^{(1)}+\varepsilon u^{(2)}+\varepsilon^2u^{(3)}+\varepsilon^3u^{(4)}.
\end{split}
\end{equation*}
After careful computations (see Appendix for details), we obtain the following remainder system for $(n^{\varepsilon}_R, u^{\varepsilon}_R, \phi^{\varepsilon}_R)$:
\begin{subequations}\label{rem}
\begin{numcases}{}
\ \ \partial_tn^{\varepsilon}_R-\frac{V-u}{\varepsilon}\partial_xn^{\varepsilon}_R +\frac{n}{\varepsilon}\partial_xu^{\varepsilon}_R +\partial_x\tilde nu^{\varepsilon}_R +\partial_x\tilde un^{\varepsilon}_R+\varepsilon{\mathcal R_1}=0,\label{rem-1}\\
\ \ \partial_tu^{\varepsilon}_R-\frac{V-u}{\varepsilon}\partial_xu^{\varepsilon}_R +\frac{1}{\varepsilon}\frac{T_i}{M}\partial_xn^{\varepsilon}_R -\frac{T_i}{M}(\frac{\tilde n+\varepsilon n^{\varepsilon}_R}{n})\partial_xn^{\varepsilon}_R\nonumber\\
\ \ \ \ \ \ \ \ \ \ \ \ +\partial_x\tilde uu^{\varepsilon}_R -\frac{T_i}{M}\frac{b}{n}n^{\varepsilon}_R
+\varepsilon\mathcal R_{2}=-\frac{1}{\varepsilon}\frac{e}{M}\partial_x\phi_R^{\varepsilon},\label{rem-2}\\
\varepsilon\partial_x^2\phi^{\varepsilon}_R=4\pi e\bar{n}[\frac{e}{T_e}\phi^{\varepsilon}_R+\varepsilon(\frac{e}{T_e})^2 \phi^{(1)}\phi^{\varepsilon}_R-n^{\varepsilon}_R]+\varepsilon^2{\mathcal R}_3,\label{rem-3}
\end{numcases}
\end{subequations}
where 
\begin{subequations}\label{rrr}
\begin{numcases}{}
\ \ b=\partial_xn^{(1)} +\varepsilon(\partial_xn^{(2)}-n^{(1)}\partial_xn^{(1)})\nonumber\\ \ \ \ \ \ \ \ \ +\varepsilon^2 (\partial_xn^{(3)}+[(n^{(1)})^2-\partial_xn^{(1)}] \partial_xn^{(1)}+n^{(1)}\partial_xn^{(2)})\nonumber\\
\ \ \ \ \ \ \ \ +\varepsilon^3(\partial_xn^{(4)} -[\partial_x(n^{(1)}n^{(3)}) +(n^{(2)}-(n^{(1)})^2)\partial_xn^{(2)}\nonumber\\
\ \ \ \ \ \ \ \ +((n^{(1)})^3-2n^{(1)}n^{(2)})\partial_xn^{(1)}]),\label{rrr-1}\\
{\mathcal R_1}=\partial_tn^{(4)}+\sum_{{1\leq i,j\leq 4; i+j\geq5}}\varepsilon^{i+j-5}\partial_x(n^{(i)}u^{(j)}),\label{rrr-2}\\
{\mathcal R_2}=\partial_tu^{(4)}+\sum_{{1\leq i,j\leq 4; i+j\geq5}}\varepsilon^{i+j-5}u^{(i)}\partial_xu^{(j)} +\frac{T_i}{M}\frac{1}{n}\big\{\text{finite }\nonumber\\
\ \ \ \ \ \ \ \  \text{combination of } n^{(i)}(1\leq i\leq 4)\text{ and their derivatives}\big\},\label{rrr-3}\\
{\mathcal R}_3=\left[\frac{1}{2}(\frac{e}{T_e})^2({\varepsilon}\phi^{\varepsilon}_R) +(\frac{e}{T_e})^2 (\phi^{(2)}+\frac{1}{2}\frac{e}{T_e}(\phi^{(1)})^2)\right]\phi^{\varepsilon}_R +\widehat R'({\varepsilon}\phi^{\varepsilon}_R).\label{rrr-4}
\end{numcases}
\end{subequations}
One can refer to the Appendix for the detailed derivation of $\mathcal R_3$, which is a smooth function of $\phi^{\varepsilon}_R$. In particular, $\mathcal R_3$ does not involve any derivatives of $\phi^{\varepsilon}_R$. The mathematical key difficulty is to derive estimates for the remainders $(n^{\varepsilon}_R, u^{\varepsilon}_R, \phi^{\varepsilon}_R)$ uniformly in $\varepsilon$.

Our main result of this paper is the following
\begin{theorem}\label{thm1}
Let $\tilde s_i\geq2$ in Theorem \ref{thm2} and \ref{thm3} be sufficiently large and $(n^{(1)},u^{(1)},\phi^{(1)})\in H^{\tilde s_1}$ be a solution constructed in Theorem \ref{thm2} for the KdV equation with initial data $(n^{(1)}_0,u^{(1)}_0,\phi^{(1)}_0)\in H^{\tilde s_1}$ satisfying \eqref{equ6}. Let $(n^{(i)},u^{(i)},\phi^{(i)})\in H^{\tilde s_i}$ $(i=2,3,4)$ be solutions of \eqref{linearized} and \eqref{relation} constructed in Theorem \ref{thm3} with initial data $(n^{(i)}_0,u^{(i)}_0,\phi^{(i)}_0)\in H^{\tilde s_i}$ satisfying \eqref{relation}. Let ${(n^{\varepsilon}_R}_0,{u^{\varepsilon}_R}_0,{\phi^{\varepsilon}_R}_0)$ satisfy \eqref{rem} and assume
\begin{equation*}
\begin{split}
n_0&=\bar{n}(1+\varepsilon^{1} n^{(1)}_0+\varepsilon^{2} n^{(2)}_0+\varepsilon^{3} n^{(3)}_0+\varepsilon^{4} n^{(4)}_0+\varepsilon^3{n^{\varepsilon}_R}_0),\\
u_0&=\varepsilon^{1} u^{(1)}_0+\varepsilon^{2} u^{(2)}_0+\varepsilon^{3}u^{(3)}_0+\varepsilon^{4} u^{(4)}_0+\varepsilon^3{u^{\varepsilon}_R}_0,\\
\phi_0&=\varepsilon^{1}\phi^{(1)}_0+\varepsilon^{2}\phi^{(2)}_0 +\varepsilon^{3}\phi^{(3)}_0+\varepsilon^{4} \phi^{(4)}_0+\varepsilon^3{\phi^{\varepsilon}_R}_0.
\end{split}
\end{equation*}
Then for any $\tau>0$, there exists $\varepsilon_0>0$ such that if $0<\varepsilon<\varepsilon_0$, the solution of the EP system \eqref{equ10} with initial data $(n_0,u_0,\phi_0)$ can be expressed as
\begin{equation*}
\begin{split}
n&=\bar{n}(1+\varepsilon^{1} n^{(1)}+\varepsilon^{2} n^{(2)}+\varepsilon^{3} n^{(3)}+\varepsilon^{4} n^{(4)}+\varepsilon^3n^{\varepsilon}_R),\\
u&=\varepsilon^{1} u^{(1)}+\varepsilon^{2} u^{(2)}+\varepsilon^{3}u^{(3)}+\varepsilon^{4} u^{(4)}+\varepsilon^3u^{\varepsilon}_R,\\
\phi&=\varepsilon^{1}\phi^{(1)}+\varepsilon^{2}\phi^{(2)} +\varepsilon^{3}\phi^{(3)}+\varepsilon^{4} \phi^{(4)}+\varepsilon^3\phi^{\varepsilon}_R,
\end{split}
\end{equation*}
such that for all $0<\varepsilon<\varepsilon_0$,
\par 1) when $T_i>0$,
\begin{equation*}
\sup_{[0,\tau]}\|(n^{\varepsilon}_R,u^{\varepsilon}_R, \phi^{\varepsilon}_R)\|_{H^{2}}^2\leq C_{\tau}\left(1+\|({n^{\varepsilon}_R}_0,{u^{\varepsilon}_R}_0, {\phi^{\varepsilon}_R}_0)\|_{H^{2}}^2\right),
\end{equation*}
\par 2) when $T_i=0$,
\begin{equation*}
\begin{split}
\sup_{[0,\tau]}&\{\|(n^{\varepsilon}_R,u^{\varepsilon}_R, \phi^{\varepsilon}_R)\|_{H^{2}}^2 +\varepsilon\|(\partial_x^3u^{\varepsilon}_R, \partial_x^3\phi^{\varepsilon}_R)\|_{L^{2}}^2 +\varepsilon^2\|\partial_x^4\phi^{\varepsilon}_R\|_{L^{2}}^2\}\\
&\leq C_{\tau}\left(1+\|({n^{\varepsilon}_R}_0,{u^{\varepsilon}_R}_0, {\phi^{\varepsilon}_R}_0)\|_{H^{2}}^2 +\varepsilon\|(\partial_x^3{u^{\varepsilon}_R}_0, \partial_x^3{\phi^{\varepsilon}_R}_0)\|_{L^{2}}^2 +\varepsilon^2\|\partial_x^4{\phi^{\varepsilon}_R}_0\|_{L^{2}}^2\right).
\end{split}
\end{equation*}
\end{theorem}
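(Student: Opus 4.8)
The plan is to establish uniform-in-$\varepsilon$ a priori estimates for the remainder system \eqref{rem} and to combine them with local existence and a continuation argument. For each fixed $\varepsilon>0$, \eqref{rem} is a quasilinear symmetrizable hyperbolic system for $(n^{\varepsilon}_R,u^{\varepsilon}_R)$ coupled to an elliptic equation for $\phi^{\varepsilon}_R$, hence locally well posed in $H^{2}$ on a time interval a priori depending on $\varepsilon$; the uniform bounds below will prevent the $H^{2}$-norm from blowing up before time $\tau$ once $\varepsilon$ is small, so a standard continuation argument extends the solution to $[0,\tau]$ and yields the stated estimates, after which $(n,u,\phi)$ is reconstituted through \eqref{expan}.

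First I would use \eqref{rem-3} as a uniformly elliptic equation for $\phi^{\varepsilon}_R$: rewritten as $\bigl(-\tfrac{\varepsilon}{4\pi e\bar n}\partial_x^{2}+\tfrac{e}{T_e}+\varepsilon(\tfrac{e}{T_e})^{2}\phi^{(1)}\bigr)\phi^{\varepsilon}_R=n^{\varepsilon}_R-\tfrac{\varepsilon^{2}}{4\pi e\bar n}\mathcal{R}_3$, the operator on the left is for $\varepsilon$ small a positive perturbation of $-\tfrac{\varepsilon}{4\pi e\bar n}\partial_x^{2}+\tfrac{e}{T_e}$, and $\mathcal{R}_3$ carries no derivatives of $\phi^{\varepsilon}_R$ (cf.\ \eqref{rrr-4}). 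Elliptic estimates then give, for each integer $s\ge0$ up to the available regularity, $\|\phi^{\varepsilon}_R\|_{H^{s}}+\varepsilon\|\partial_x^{s+2}\phi^{\varepsilon}_R\|_{L^{2}}\le C\bigl(\|n^{\varepsilon}_R\|_{H^{s}}+\varepsilon^{2}\|\mathcal{R}_3\|_{H^{s}}\bigr)$ and, by interpolation, $\varepsilon^{1/2}\|\partial_x^{s+1}\phi^{\varepsilon}_R\|_{L^{2}}\le C(\|n^{\varepsilon}_R\|_{H^{s}}+\cdots)$; since $\|\mathcal{R}_3\|_{H^{s}}\lesssim\|\phi^{\varepsilon}_R\|_{H^{s}}(1+\varepsilon\|\phi^{\varepsilon}_R\|_{H^{s}})$, the quantity $\phi^{\varepsilon}_R$, together with $\varepsilon\|\partial_x^{3}\phi^{\varepsilon}_R\|_{L^{2}}^{2}$ and $\varepsilon^{2}\|\partial_x^{4}\phi^{\varepsilon}_R\|_{L^{2}}^{2}$, is controlled by $\|n^{\varepsilon}_R\|_{H^{2}}$ alone, and moreover the discrepancy $e\phi^{\varepsilon}_R-T_e n^{\varepsilon}_R$ is $O(\varepsilon)$ in low norms. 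Thus $\phi^{\varepsilon}_R$ is subordinate to $n^{\varepsilon}_R$ and the ``extra'' $\varepsilon$- and $\varepsilon^{2}$-weighted $\phi^{\varepsilon}_R$-terms in part 2) are automatically present.

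Next I would run the energy estimate on \eqref{rem-1}--\eqref{rem-2}. The key algebraic fact is the dispersion relation \eqref{equ7}, $(T_i+T_e)/M=V^{2}$: using $n=\bar n+O(\varepsilon)$ and the elliptic step, the $O(1/\varepsilon)$ part of \eqref{rem-1}--\eqref{rem-2} becomes skew-symmetric in the weighted inner product with weight $V^{2}$ on $n^{\varepsilon}_R$ and $\bar n$ on $u^{\varepsilon}_R$, so the genuinely singular cross-terms cancel; the constant-coefficient transport $\tfrac{V}{\varepsilon}\partial_x$ is skew-adjoint and disappears (also from all commutators, its $x$-derivative vanishing), while $\tfrac{V-u}{\varepsilon}-\tfrac{V}{\varepsilon}=-\tfrac{u}{\varepsilon}$ is bounded uniformly in $\varepsilon$ because $u=O(\varepsilon)$. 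What survives is a dispersive contribution proportional to $\partial_x^{3}\phi^{\varepsilon}_R$ in the $u^{\varepsilon}_R$-equation --- precisely the term producing the KdV dispersion. When $T_i>0$ the ion-pressure terms in \eqref{rem-2} restore a strong symmetric-hyperbolic structure at order $1/\varepsilon$, and this dispersive contribution, together with all commutators from applying $\partial_x^{\ell}$, $0\le\ell\le2$, is absorbed into $C_\tau(1+\mathcal E)$ with $\mathcal E\sim\|(n^{\varepsilon}_R,u^{\varepsilon}_R,\phi^{\varepsilon}_R)\|_{H^{2}}^{2}$, and Gronwall gives part 1). When $T_i=0$ the ion pressure degenerates; processing the surviving $\partial_x^{3}\phi^{\varepsilon}_R$ term through \eqref{rem-1} and the Poisson constraint \eqref{rem-3} turns it into perfect time-derivatives of $\varepsilon$-weighted higher Sobolev norms, and closing the identity with an $\varepsilon$-independent constant forces the energy to be augmented by $\varepsilon\|\partial_x^{3}u^{\varepsilon}_R\|_{L^{2}}^{2}$, $\varepsilon\|\partial_x^{3}\phi^{\varepsilon}_R\|_{L^{2}}^{2}$ and $\varepsilon^{2}\|\partial_x^{4}\phi^{\varepsilon}_R\|_{L^{2}}^{2}$ (the first from carrying the $u^{\varepsilon}_R$-equation one derivative beyond $H^{2}$ with an $\varepsilon$ weight, the last from the $\varepsilon\partial_x^{2}$ in \eqref{rem-3} at top order); with this augmented $\mathcal E$ the cross-terms cancel in a cascade, the $\varepsilon$-weights compensating the remaining $1/\varepsilon$ factors, and Gronwall gives part 2). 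Throughout, $\varepsilon\mathcal{R}_1$, $\varepsilon\mathcal{R}_2$ and the coefficients in \eqref{rrr} are bounded uniformly in $\varepsilon$ (they involve only the smooth profiles $n^{(k)},u^{(k)}$ for $1\le k\le 4$ and $\tfrac1n$, bounded for $\varepsilon$ small), contributing $C_\tau(1+\mathcal E)$, and $\mathcal R_3$ is mild and absorbed as above.

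The main obstacle is this uniform-in-$\varepsilon$ energy estimate, i.e.\ obtaining $\tfrac{d}{dt}\mathcal E\le C_\tau(1+\mathcal E)$ with $C_\tau$ \emph{independent of} $\varepsilon$ despite the $1/\varepsilon$ singularities in \eqref{rem}. It rests on (i) the exact cancellation of all $O(1/\varepsilon)$ contributions, forced by \eqref{equ7} together with the $O(\varepsilon)$ size of $e\phi^{\varepsilon}_R-T_e n^{\varepsilon}_R$ from \eqref{rem-3}; (ii) the correct $\varepsilon$-weighted augmentation of the $H^{2}$ energy in the $T_i=0$ case, dictated by the regularizing structure of \eqref{rem-3} and the coupling $\partial_t u^{\varepsilon}_R\sim-\tfrac1\varepsilon\partial_x\phi^{\varepsilon}_R$; and (iii) a careful accounting of every commutator to verify that, after symmetrization, no term retains an uncancelled negative power of $\varepsilon$. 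The $T_i>0$ case is comparatively easier precisely because the ion pressure restores strict hyperbolicity at order $1/\varepsilon$.
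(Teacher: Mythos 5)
Your proposal captures the broad outline but, as written, it would not close; the two places where it elides the real difficulty are precisely where the paper spends most of its effort.

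For $T_i>0$ you propose a constant-coefficient weighted $L^2$ energy, $V^2$ on $n^\varepsilon_R$ and $\bar n$ on $u^\varepsilon_R$, claiming the $O(1/\varepsilon)$ cross terms cancel by \eqref{equ7} and the leftover dispersive contribution ``is absorbed into $C_\tau(1+\mathcal E)$ with $\mathcal E\sim\|(n^\varepsilon_R,u^\varepsilon_R,\phi^\varepsilon_R)\|_{H^2}^2$.'' This last step does not work. The cancellation $w_1\bar n=w_2 V^2$ only removes the singular cross-term up to the error in replacing $\phi^\varepsilon_R$ by $(T_e/e)n^\varepsilon_R$; from \eqref{rem-3} this error is $\varepsilon\partial_x^2\phi^\varepsilon_R$ divided by $\varepsilon$, i.e.\ a term of order one in $\varepsilon$ but carrying three derivatives on $\phi^\varepsilon_R$. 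At the $\partial_x^2$ level that produces $\int\partial_x^2 u^\varepsilon_R\,\partial_x^5\phi^\varepsilon_R$ (or, after integration by parts, $\int\partial_x^3 u^\varepsilon_R\,\partial_x^4\phi^\varepsilon_R$), which is not controlled by the $H^2$ energy: Lemma~\ref{L1} only gives $\varepsilon\|\partial_x^3\phi^\varepsilon_R\|^2$ and $\varepsilon^2\|\partial_x^4\phi^\varepsilon_R\|^2$ from $\|n^\varepsilon_R\|_{H^2}$, and $\partial_x^3 u^\varepsilon_R$ is entirely absent. The paper's route for $T_i>0$ is fundamentally different: it reduces \eqref{rem} to the abstract form \eqref{equ11} via the PsDO decomposition $\phi^\varepsilon_R=\Phi_1+\Phi_2+\Phi_3$ (\eqref{equ44}), and uses a $\xi$-dependent symmetrizer $\mathcal C=Op(P_\varepsilon^{-1})$ (\eqref{PInverse}) with $n_2=1+\varepsilon\phi^{(1)}+\varepsilon\xi^2$ built into the symbol. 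It is precisely the $\varepsilon\xi^2$ in $P_\varepsilon^{-1}$ that accounts for the dispersive Poisson correction you are dropping, and the fact that $P_\varepsilon^{-1}$ is a bounded family of order-$0$ PsDOs \emph{only when $T_i>0$} is the reason the two cases are treated separately in the paper. A constant-coefficient weight cannot substitute for this symbol.

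For $T_i=0$ your proposal is on the right track --- augment the energy by $\varepsilon\|\partial_x^3 u^\varepsilon_R\|^2$, $\varepsilon\|\partial_x^3\phi^\varepsilon_R\|^2$, $\varepsilon^2\|\partial_x^4\phi^\varepsilon_R\|^2$ and use \eqref{rem-3} to trade $n^\varepsilon_R$ for $\phi^\varepsilon_R$ --- but the statement that the cross terms ``cancel in a cascade, the $\varepsilon$-weights compensating the remaining $1/\varepsilon$ factors'' skips the genuine obstruction. The third-order estimate produces the term $\mathcal B^{(3\times\varepsilon)}=-\int\partial_x^3\phi^\varepsilon_R\,\partial_x[\varepsilon^2/n]\,\partial_t\partial_x^4\phi^\varepsilon_R$ (equation~\eqref{equ41}), in which $\partial_t\partial_x^4\phi^\varepsilon_R$ cannot be bounded by $|\!|\!|(u^\varepsilon_R,\phi^\varepsilon_R)|\!|\!|_\varepsilon$, and, as Remark~\ref{remark} notes, trying to integrate by parts further just trades this for $\partial_x^5\phi^\varepsilon_R$. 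The paper even emphasizes that ``this difficulty persists no matter how high the Sobolev order or the expansion order is,'' so it is not a routine cascade. The fix is the exact algebraic cancellation $\mathcal B^{(2)}+\mathcal B^{(3\times\varepsilon)}=\int\partial_x(\varepsilon/n)\,\partial_x^3\phi^\varepsilon_R\,\partial_t\partial_x^2\bigl(\phi^\varepsilon_R-\varepsilon\partial_x^2\phi^\varepsilon_R\bigr)$, in which the specific combination $\phi^\varepsilon_R-\varepsilon\partial_x^2\phi^\varepsilon_R$ matches the left-hand side of the Poisson equation \eqref{rem2-3} and hence can be replaced by $n^\varepsilon_R-\varepsilon\phi^{(1)}\phi^\varepsilon_R-\varepsilon^2\mathcal R_3$, closing the estimate via Lemma~\ref{L2}. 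Without singling out this pairing of $\mathcal B^{(2)}$ from the second-order estimate with $\mathcal B^{(3\times\varepsilon)}$ from the $\varepsilon$-weighted third-order estimate, the Gronwall argument does not close, and this is the heart of the $T_i=0$ proof.
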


\begin{remark}
While we get a global uniform in $\varepsilon$ estimate for the $H^2$ norm of the remainders, the $H^3$ norm or the $H^4$ norm may blow up in finite time. However, they are both uniformly bounded after multiplied by $\varepsilon^{1/2}$ and $\varepsilon$ respectively.
\end{remark}

Our result provides a rigorous and unified justification of the KdV equation limit of the Euler-Poisson system for ion-acoustic waves with Boltamann relation. The classical formal derivation in \cite{SG69} deals with only the case of $T_i=0$, while our results cover all the case of $T_i\geq 0$. When $T_i>0$, the control of the remainder falls into the framework of Grenier \cite{Gre97}, where the author studied some singular limits by using the pseudo-differential operator (PsDO) techniques for singular perturbations of hyperbolic systems. But suitable decomposition of \eqref{rem-3} is required.

Unfortunately, in the classical case of $T_i=0$, we cannot apply Grenier's machinery to get uniform estimate for the remainders. This is because when $T_i=0$, the matrix $P_{\varepsilon}^{-1}$ given by \eqref{PInverse} is not a bounded family of PsDOs of order 0 any more,  see \cite{Gre97} or \cite{Stein93} for more details on PsDO theory. To overcome this difficulty, we need to employ a careful combination of delicate energy estimate together with analysis of the structure of the remainder system.

The basic plan is to first estimate some uniform bound for $(u^{\varepsilon}_R,\phi^{\varepsilon}_R)$ and then recover the estimate for $n^{\varepsilon}_R$ from the estimate of $\phi^{\varepsilon}_R$ by the Poisson equation \eqref{rem-3} (see Lemma \ref{L1}). We want to apply the Gronwall lemma to complete the proof. To state clearly, we first define (see \eqref{def-A})
\begin{equation}\label{triple}
|\!|\!|(u^{\varepsilon}_R,\phi^{\varepsilon}_R)|\!|\!|^2_{\varepsilon}=\|u^{\varepsilon}_R\|_{H^2}^2 +\|\phi^{\varepsilon}_R\|_{H^2}^2 +\varepsilon\|\partial_x^3u^{\varepsilon}_R\|^2 +\varepsilon\|\partial_x^3\phi^{\varepsilon}_R\|^2 +\varepsilon^2\|\partial_x^4\phi^{\varepsilon}_R\|^2.
\end{equation}
As we will see, the zeroth order, the first to the second order estimates for $(u^{\varepsilon}_R,\phi^{\varepsilon}_R)$ can be controlled in terms of $|\!|\!|(u^{\varepsilon}_R,\phi^{\varepsilon}_R)|\!|\!|^2_{\varepsilon}$. Unfortunately, the third order estimate of $\sqrt{\varepsilon}(u^{\varepsilon}_R,\phi^{\varepsilon}_R)$ involves a bad term $\mathcal B^{(3\times\varepsilon)}$ (see \eqref{bad3} and Remark \ref{remark})
\begin{equation}\label{equ41}
\begin{split}
\mathcal B^{(3\times\varepsilon)}=-\int\partial_{x}^{3}\phi^{\varepsilon}_R \partial_x\left[\frac{\varepsilon^2}n\right]\partial_{t} \partial_x^4\phi^{\varepsilon}_Rdx,
\end{split}
\end{equation}
where $\partial_{t} \partial_x^4\phi^{\varepsilon}_R$ cannot be controlled in terms of $|\!|\!|(u^{\varepsilon}_R,\phi^{\varepsilon}_R)|\!|\!|_{\varepsilon}$. Even worse, this difficulty persists no matter how high the Sobolev order or the expansion order is. For example, when we want to estimate the $H^k$ norm of $(u^{\varepsilon}_R,\phi^{\varepsilon}_R)$, we get a term $\mathcal B^{(k)}=-\int\partial_{x}^{k}\phi^{\varepsilon}_R \partial_x\left[\frac{\varepsilon}n\right]\partial_{t} \partial_x^{k+1}\phi^{\varepsilon}_Rdx$ with the same structure of \eqref{equ41}. Note that $\mathcal B^{(3\times\varepsilon)}$ in equation \eqref{bad3} is just $\mathcal B^{(3)}$ multiplied by $\varepsilon$.

Fortunately, we are able to employ the precise structure of
\eqref{rem-3} to overcome such a difficulty. In the second order estimate, we can extract a precise term $\mathcal B^{(2)}$ (after integration by parts, see \eqref{equ-B2})
\begin{equation*}
\begin{split}
\mathcal B^{(2)}=\int\partial_{x}^3\phi^{\varepsilon}_R \partial_x\left[\frac{\varepsilon}n\right] \partial_{t}\partial_x^2\phi^{\varepsilon}_R.
\end{split}
\end{equation*}
Even though $\partial_t\partial_x^4\phi^{\varepsilon}_R$ in \eqref{equ41} is out of control, the combination of
$$\partial_t\partial_x^2\phi^{\varepsilon}_R -\varepsilon\partial_t\partial_x^4\phi^{\varepsilon}_R$$
can be controlled in terms of $|\!|\!|(u^{\varepsilon}_R,\phi^{\varepsilon}_R)|\!|\!|_{\varepsilon}^2$ by using the Poisson equation \eqref{rem-3}.

In recent years, there have been a large number of studies of the Euler-Poisson (Maxwell) equation and related various singular limit \cite{CG00,ELT01,LT02,LT03,LMZ10,GP11,Te07,Guo98,GGP11,GJ10}. In \cite{SW00}, KdV equation is derived rigorously from the water-wave equation.

This paper is organized as follows. In Section 2, we prove the limit for the case of $T_i>0$, by using the PsDO framework of Grenier \cite{Gre97}. In Section 3, we prove the limit for the classical case of $T_i=0$, where more delicate estimate is required. Throughout this paper, $\|\cdot\|$ denotes the $L^2$ norm.

\section{Uniform energy estimates: the case $T_i>0$}\label{sect-energy}
\setcounter{section}{2}\setcounter{equation}{0}
In this section, we give the energy estimates uniformly in $\varepsilon$ for the case of $T_i>0$ for the remainder system \eqref{rem} of $(n^{\varepsilon}_R, u^{\varepsilon}_R, \phi^{\varepsilon}_R)$. This section is divided into two parts. In the first one, we introduce an abstract form of the remainder equations of $n^{\varepsilon}_R$ and $u^{\varepsilon}_R$, while $\phi^{\varepsilon}_R$ is only included implicitly. This abstract form is more suitable for us to apply the PsDO framework in Grenier \cite{Gre97}. Then in the second part, we establish energy estimates and prove the main theorem for $T_i>0$. We remark that PsDO framework is applicable mainly because this system is symmetrizable when $T_i>0$, see also \cite{Majda84}.

For notational convenience, we normalize the physical constants $e,M,T_i,T_e$ to be 1 and $\bar{n}=(4\pi e)^{-1}$ in \eqref{rem} throughout this section. Therefore, $V=\sqrt2$ by \eqref{constraint}. Let $\tau\geq 0$ be arbitrarily fixed, we will establish estimates in $L^{\infty}(0,\tau;H^{s'})$ for any $2\leq s'\leq \tilde s_4-3$, where $\tilde s_4$ is sufficiently large and fixed in Theorem \ref{thm2}.
\subsection{Reduction}
We follow Grenier's framework of \cite{Gre97} (see also \cite{CG00}). Before we give the uniform estimate, we first reduce \eqref{rem} into an abstract form.
\begin{lemma}
Let $(n^{\varepsilon}_R,u^{\varepsilon}_R,\phi^{\varepsilon}_R)$ be a solution to \eqref{rem} and $w=(n^{\varepsilon}_R,u^{\varepsilon}_R)^{T}$. Then $w$ satisfies the following system
\begin{equation}\label{equ11}
w_t+\mathcal A_{\varepsilon}(w)w+\mathcal R(w)=0,
\end{equation}
where $\mathcal A_{\varepsilon}$ is a family of pseudodifferential operators whose symbol depends on the solution $w$, and can be decomposed into the sum of a ``regular" part and a ``singular" part
\begin{equation*}
\mathcal A_{\varepsilon}(w)=\mathcal A_{1,\varepsilon}(w)+\mathcal A_{2,\varepsilon},
\end{equation*}
whose symbols are respectively the following matrices
\begin{equation}\label{equ12}
A_{1,\varepsilon}(w)=\left[
\begin{array}{ccc}
i\xi(\tilde u+\varepsilon^2u^{\varepsilon}_R)  &  i\xi(\tilde n+\varepsilon^2 n^{\varepsilon}_R)\\
-\frac{i\xi(\tilde n+\varepsilon^2 n^{\varepsilon}_R)}{n}-\frac{i\xi\phi^{(1)}}{{(1+\varepsilon\xi^2)}{(1+ \varepsilon\phi^{(1)}+\varepsilon\xi^2)}}  &  i\xi(\tilde u+\varepsilon^2 u^{\varepsilon}_R)
\end{array}
\right]
\end{equation}
and
\begin{equation}\label{equ13}
A_{2,\varepsilon}=\frac1\varepsilon \tilde A_{2,\varepsilon}(\xi)=\frac{1}{\varepsilon}\left[
\begin{array}{ccc}
-\sqrt{2}i\xi  &  i\xi\\
i\xi+\frac{i\xi}{1+\varepsilon\xi^2}  &  -\sqrt{2}i\xi
\end{array}
\right].
\end{equation}
In \eqref{equ11},
\begin{equation}\label{equ51}
\mathcal R(w)=\mathcal F_{\varepsilon}(w)w+\mathcal N(w),
\end{equation}where $\mathcal F_{\varepsilon}(w)$ is the coefficient matrix before $(n^{\varepsilon}_R,u^{\varepsilon}_R)^T$:
\begin{equation}\label{equ24}
\mathcal F_{\varepsilon}(w)=\left[
\begin{array}{ccc}
\partial_x\tilde u  &  \partial_x\tilde n\\
-\frac{b}{n}  &  \partial_x\tilde u
\end{array}
\right],
\end{equation}
and $\mathcal N=[\mathcal N_1, -\mathcal N_2]^T$ is defined by \eqref{er2}. There exists constant $C_{\alpha}$
\begin{equation}\label{equ23}
\|\mathcal F_{\varepsilon}(w)w\|_{H^{\alpha}}+\|\mathcal N(w)\|_{H^{\alpha}}\leq C_{\alpha}(1+\|n^{\varepsilon}_R\|+\|u^{\varepsilon}_R\|)
\end{equation}
for every $2\leq\alpha\leq s'$.
\end{lemma}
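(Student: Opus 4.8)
The plan is to derive \eqref{equ11} directly from the remainder system \eqref{rem} by rewriting the first two equations \eqref{rem-1}--\eqref{rem-2} as an evolution system for $w=(n^{\varepsilon}_R,u^{\varepsilon}_R)^T$, and then eliminating $\phi^{\varepsilon}_R$ in favor of $n^{\varepsilon}_R$ by inverting the Poisson relation \eqref{rem-3}. First I would collect, in \eqref{rem-1}--\eqref{rem-2}, all the terms carrying the singular prefactor $1/\varepsilon$: these are $-\frac{V-u}{\varepsilon}\partial_x n^{\varepsilon}_R$, $\frac{n}{\varepsilon}\partial_x u^{\varepsilon}_R$, $\frac{1}{\varepsilon}\frac{T_i}{M}\partial_x n^{\varepsilon}_R$, $-\frac{V-u}{\varepsilon}\partial_x u^{\varepsilon}_R$, and the singular term $-\frac1\varepsilon\frac{e}{M}\partial_x\phi^{\varepsilon}_R$. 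After normalizing $e=M=T_i=T_e=1$, $\bar n=(4\pi e)^{-1}$ so that $V=\sqrt2$, I would split $V-u = V - (\tilde u + \varepsilon^2 u^{\varepsilon}_R)$ and thereby separate each such coefficient into its leading constant part (which remains $O(1/\varepsilon)$ and goes into $\mathcal A_{2,\varepsilon}$) and a smooth, solution-dependent $O(1)$ part $\tilde u + \varepsilon^2 u^{\varepsilon}_R$ (which goes into $\mathcal A_{1,\varepsilon}$). Similarly $n=\bar n(1+\varepsilon\tilde n + \varepsilon^3 n^{\varepsilon}_R) \cdot 4\pi e = 1 + \varepsilon\tilde n + \varepsilon^3 n^{\varepsilon}_R$ after normalization, so $\frac{n}{\varepsilon}\partial_x u^{\varepsilon}_R$ splits as $\frac1\varepsilon\partial_x u^{\varepsilon}_R$ plus $(\tilde n + \varepsilon^2 n^{\varepsilon}_R)\partial_x u^{\varepsilon}_R$.

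The crux is the $\partial_x\phi^{\varepsilon}_R$ term: I would solve \eqref{rem-3} for $\phi^{\varepsilon}_R$ on the Fourier side. Dropping for the moment the lower order pieces ($\varepsilon(\frac{e}{T_e})^2\phi^{(1)}\phi^{\varepsilon}_R$ and $\varepsilon^2\mathcal R_3$), the Poisson equation reads $-\varepsilon\xi^2\widehat{\phi^{\varepsilon}_R} = \widehat{\phi^{\varepsilon}_R} - \widehat{n^{\varepsilon}_R}$, i.e. $\widehat{\phi^{\varepsilon}_R} = \frac{1}{1+\varepsilon\xi^2}\widehat{n^{\varepsilon}_R}$; hence $\widehat{\partial_x\phi^{\varepsilon}_R} = \frac{i\xi}{1+\varepsilon\xi^2}\widehat{n^{\varepsilon}_R}$, which is exactly the extra $\frac{i\xi}{1+\varepsilon\xi^2}$ appearing in the $(2,1)$ entry of $\tilde A_{2,\varepsilon}$ in \eqref{equ13}. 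The term $\varepsilon(\frac{e}{T_e})^2\phi^{(1)}\phi^{\varepsilon}_R$ — which is not a Fourier multiplier because $\phi^{(1)}=\phi^{(1)}(t,x)$ depends on $x$ — must be handled by writing the full Poisson operator as $(1+\varepsilon\xi^2) - \varepsilon\phi^{(1)}(x)$ acting on $\phi^{\varepsilon}_R$; inverting this, one gets a genuine pseudodifferential operator whose principal symbol is $\frac{1}{1+\varepsilon\xi^2 - \varepsilon\phi^{(1)}}$, which after factoring produces the $\frac{i\xi\phi^{(1)}}{(1+\varepsilon\xi^2)(1+\varepsilon\phi^{(1)}+\varepsilon\xi^2)}$ correction that is placed in the regular part $A_{1,\varepsilon}$ in \eqref{equ12}, plus a smoothing remainder that goes into $\mathcal N$. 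Here I would invoke standard symbolic calculus (composition and parametrix construction, as in \cite{Gre97,Stein93}) and crucially use $T_i>0$, which guarantees that $1+\varepsilon\xi^2-\varepsilon\phi^{(1)}$ and the associated symbols stay elliptic and bounded below uniformly in $\varepsilon$, so that $P_\varepsilon^{-1}$ is a bounded family of order-$0$ operators.

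Once $\partial_x\phi^{\varepsilon}_R$ is expressed in terms of $n^{\varepsilon}_R$ through this operator, all remaining terms in \eqref{rem-1}--\eqref{rem-2} are collected: the first-order transport/coupling terms $\partial_x\tilde u \cdot u^{\varepsilon}_R + \partial_x\tilde u\cdot n^{\varepsilon}_R$ type contributions form the matrix $\mathcal F_\varepsilon(w)$ in \eqref{equ24} (noting the $-\frac{T_i}{M}\frac{b}{n}n^{\varepsilon}_R$ term supplies the $(2,1)$ entry $-b/n$), while the explicitly $\varepsilon$-multiplied forcing terms $\varepsilon\mathcal R_1$, $\varepsilon\mathcal R_2$, the quadratic pieces $-\frac{T_i}{M}\frac{\varepsilon n^{\varepsilon}_R}{n}\partial_x n^{\varepsilon}_R$, and the contributions of $\varepsilon^2\mathcal R_3$ routed through the Poisson inversion all land in $\mathcal N=[\mathcal N_1,-\mathcal N_2]^T$ (this is the definition \eqref{er2} referenced in the statement). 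Finally, for the bound \eqref{equ23}: I would check term by term that $\mathcal F_\varepsilon(w)$ has $H^\alpha$-bounded entries (since $\tilde n,\tilde u, b, 1/n$ are smooth and bounded with all derivatives uniformly in $\varepsilon$, using that $n$ stays bounded away from zero), so $\|\mathcal F_\varepsilon(w)w\|_{H^\alpha}\lesssim \|n^{\varepsilon}_R\|_{H^\alpha}+\|u^{\varepsilon}_R\|_{H^\alpha}$ — but this needs the claimed bound in terms of the $L^2$ norms only; that improvement comes from the fact that the top-order part of $\mathcal A_\varepsilon$ has been separated out into $\mathcal A_{1,\varepsilon}+\mathcal A_{2,\varepsilon}$, so $\mathcal F_\varepsilon$ and $\mathcal N$ contain no derivatives of $w$ at top order, hence are genuinely lower order — I would rely on the fact that $\mathcal R_1,\mathcal R_2,\mathcal R_3$ involve only the known profiles $n^{(i)},u^{(i)},\phi^{(i)}$ (which are fixed smooth functions, contributing the constant $1$) plus at most first-derivative or undifferentiated occurrences of $w$, and that $\mathcal R_3$ by construction contains no derivatives of $\phi^{\varepsilon}_R$. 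The main obstacle, and the step deserving the most care, is the rigorous construction and uniform (in $\varepsilon$) boundedness of the inverse Poisson operator $P_\varepsilon^{-1}$ as an order-$0$ pseudodifferential operator and the verification that peeling off its principal symbol leaves only a smoothing remainder — this is precisely where $T_i>0$ is used and is the point at which the argument of Section 3 (the $T_i=0$ case) must diverge.
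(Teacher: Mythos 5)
Your overall route coincides with the paper's: write \eqref{rem-1}--\eqref{rem-2} as a $2\times 2$ system for $w=(n^{\varepsilon}_R,u^{\varepsilon}_R)^T$, split the coefficients of the $1/\varepsilon$ transport terms into constant (singular) and $\tilde u+\varepsilon^2u^{\varepsilon}_R$ (regular) parts, invert the Poisson relation \eqref{rem-3} by a pseudodifferential parametrix to replace $\partial_x\phi^{\varepsilon}_R$ with a PsDO applied to $n^{\varepsilon}_R$, and collect leftover terms into $\mathcal F_\varepsilon$ and $\mathcal N$. However, two points in your sketch are off.

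First, a sign slip that is probably typographical but worth correcting: after rearranging \eqref{rem-3}, the operator to be inverted is $1+\varepsilon\phi^{(1)}+\varepsilon\xi^2$, not $1+\varepsilon\xi^2-\varepsilon\phi^{(1)}$, and its parametrix has symbol $\frac{1}{1+\varepsilon\phi^{(1)}+\varepsilon\xi^2}$; you later write the correct $\frac{i\xi\phi^{(1)}}{(1+\varepsilon\xi^2)(1+\varepsilon\phi^{(1)}+\varepsilon\xi^2)}$, so this is an internal inconsistency. Second, and more substantively, you attribute the boundedness of the Poisson inversion to $T_i>0$, and identify that operator with ``$P_\varepsilon^{-1}$''. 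This is a conflation of two distinct objects. The ellipticity and uniform $L^2$-boundedness of $Op\bigl(\frac{1}{1+\varepsilon\phi^{(1)}+\varepsilon\xi^2}\bigr)$ have nothing to do with $T_i$: they follow from $\|\varepsilon\phi^{(1)}\|_{L^\infty}<\frac12$ for small $\varepsilon$, as the paper uses, and indeed the same Poisson reduction is available in the $T_i=0$ case. The object $P_\varepsilon^{-1}$ in the paper (equation \eqref{PInverse}) is the diagonalizer of the $2\times 2$ matrix symbol $A_\varepsilon$, which only enters the subsequent energy estimate subsection, not this reduction lemma; that is where $T_i>0$ matters. Finally, your sketch does not mention that $\mathcal R_3$ in \eqref{rem-3} depends on $\phi^{\varepsilon}_R$ itself, so the inversion is not a one-shot Fourier division; the paper handles this by the explicit decomposition $\phi^{\varepsilon}_R=\Phi_1+\Phi_2+\Phi_3$ with $\Phi_3$ solving \eqref{equ42} and an a priori smallness estimate closing the loop, and some version of this step is needed to make your ``route $\varepsilon^2\mathcal R_3$ through the inversion'' rigorous.
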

\begin{proof}
To reduce the system \eqref{rem} to an evolution system for $(n^{\varepsilon}_R,u^{\varepsilon}_R)$, we need to express $\phi^{\varepsilon}_R$ in terms of $n^{\varepsilon}_R$ and $u^{\varepsilon}_R$. We therefore consider the decomposition
\begin{equation}\label{equ44}
\phi^{\varepsilon}_R=\Phi_1+\Phi_2+\Phi_3,
\end{equation}
where $\Phi_1,\Phi_2$ and $\Phi_3$ are specified below.

Recall \eqref{rem-3},
\begin{equation}\label{equ40}
\varepsilon\partial_x^2\phi^{\varepsilon}_R =(1+\varepsilon\phi^{(1)})\phi^{\varepsilon}_R-n^{\varepsilon}_R +\varepsilon^2{\mathcal R}_3.
\end{equation}
First, we define
\begin{equation*}
\Phi_1=Op(\frac{1}{(1+\varepsilon\phi^{(1)})+\varepsilon\xi^2})n^{\varepsilon}_R,
\end{equation*}
where $Op(\frac{1}{(1+\varepsilon\phi^{(1)})+\varepsilon\xi^2})$ is a PsDO with limited smoothness (see \cite{Gre97} for more details), for all $0<\varepsilon<\varepsilon_1$ for some $\varepsilon_1>0$. We then have
\begin{equation}\label{equ47}
\|\Phi_1\|_{H^{\alpha}}\leq C\|n^{\varepsilon}_R\|_{H^{\alpha}}.
\end{equation}
In fact, by standard PsDO calculus, we have
\begin{equation*}
Op({(1+\varepsilon\phi^{(1)})+\varepsilon\xi^2}) Op(\frac{1}{(1+\varepsilon\phi^{(1)}) +\varepsilon\xi^2})n^{\varepsilon}_R=n^{\varepsilon}_R
+\varepsilon\tilde{\mathcal S}_1n^{\varepsilon}_R,
\end{equation*}
where $\tilde{\mathcal S}_1$ is a bounded operator from $H^{\alpha}$ to $H^{\alpha+1}$ defined by
\begin{equation*}
\begin{split}
\tilde{\mathcal S}_1=&\frac{1}{\varepsilon}\left(({1+\varepsilon\phi^{(1)}})Op(\frac{1}{1 +\varepsilon\phi^{(1)} +\varepsilon\xi^2})-Op(\frac{1+\varepsilon\phi^{(1)}} {1+\varepsilon\phi^{(1)} +\varepsilon\xi^2})\right)\\
=&\phi^{(1)} Op(\frac{1}{1+\varepsilon\phi^{(1)} +\varepsilon\xi^2})-Op(\frac{\phi^{(1)}} {1+\varepsilon\phi^{(1)}+\varepsilon\xi^2}).
\end{split}
\end{equation*}
We remark that the $\varepsilon$ in front of $\tilde{\mathcal S_1}$ is very important, since it cancels part of the singularity of $\varepsilon^{-1}\partial_x\phi_R^{\varepsilon}$ in \eqref{rem}. Equivalently, $\Phi_1$ is a solution of
\begin{equation}\label{equ48}
\varepsilon\partial_x^2\Phi_1=(1+\varepsilon\phi^{(1)})\Phi_1-n^{\varepsilon}_R -\varepsilon \tilde{\mathcal S}_1n^{\varepsilon}_R.
\end{equation}
This enables us to define $\Phi_2$ to be the solution of
\begin{equation}\label{equ49}
\varepsilon\partial_x^2\Phi_2=(1+\varepsilon\phi^{(1)})\Phi_2+\varepsilon \tilde{\mathcal S}_1n^{\varepsilon}_R.
\end{equation}
It is straightforward that for $\alpha\geq 1$
\begin{equation}\label{equ45}
\|\Phi_2\|_{H^{\alpha}}\leq \varepsilon C\|\tilde{\mathcal S}_1n^{\varepsilon}_R\|_{H^{\alpha}}\leq \varepsilon C\|n^{\varepsilon}_R\|_{H^{\alpha-1}}.
\end{equation}
Finally, we define $\Phi_3$ to be the solution of
\begin{equation}\label{equ42}
\varepsilon\partial_x^2\Phi_3=(1+\varepsilon\phi^{(1)})\Phi_3+\varepsilon^2{\mathcal R}_3.
\end{equation}
By superposition of linear equations \eqref{equ48}, \eqref{equ49} and \eqref{equ42}, we get \eqref{equ44}.

Now, we consider the decomposition of $-\frac{1}{\varepsilon}\partial_x\phi^{\varepsilon}_R$ on the RHS of \eqref{rem-2}. For the expression of $\mathcal R_3$ in \eqref{rrr-4}, by Lemma \ref{L8} there exists constant 
$C=C(\|\phi^{(i)}\|_{H^{\tilde s_i}}, \varepsilon\|\phi^{\varepsilon}_R\|_{H^{\alpha}})$ such that
\begin{equation}\label{equ43}
\begin{split}
\|\mathcal R_3\|_{H^{\alpha}}\leq & C\|\phi^{\varepsilon}_R\|_{H^{\alpha}}\\
\leq & C(\|\Phi_1\|_{H^{\alpha}},\|\Phi_2\|_{H^{\alpha}},\|\Phi_3\|_{H^{\alpha}}),
\end{split}
\end{equation}
for any $\alpha$ such that $2\leq \alpha\leq s'$, for some $s'\leq\tilde s_4$ depending on $\tilde s_4$. Taking inner product of \eqref{equ42} with $\partial_x^{\alpha}\Phi_3$ and integrating by parts, we have
\begin{equation*}
\begin{split}
\varepsilon\|\partial_x^{\alpha+1}\Phi_3\|^2+\int\partial_x^{\alpha} ((1+\varepsilon\phi^{(1)})\Phi_3)\partial_x^{\alpha}\Phi_3 \leq & \varepsilon^2C(\|\Phi_1\|_{H^{\alpha}},\|\Phi_2\|_{H^{\alpha}},\|\Phi_3\|_{H^{\alpha}}) \|\partial_x^{\alpha}\Phi_3\|.
\end{split}
\end{equation*}
On the other hand, since $\|\varepsilon\phi^{(1)}\|_{L^{\infty}}<1/2$ when $0<\varepsilon<\varepsilon_1$ for some $\varepsilon_1>0$, we obtain
\begin{equation}\label{equ46}
\begin{split}
\varepsilon&\|\partial_x\Phi_3\|_{H^{\alpha}}+\|\Phi_3\|_{H^{\alpha}}\leq \varepsilon^2C(\|\Phi_1\|_{H^{\alpha}},\|\Phi_2\|_{H^{\alpha}}).
\end{split}
\end{equation}
Therefore, from \eqref{equ45}, \eqref{equ46} and \eqref{equ47},
\begin{equation}\label{equ21}
\begin{split}
\|\frac{1}{\varepsilon}(\partial_x\Phi_2+\partial_x\Phi_3)\|_{H^{\alpha}}\leq & \|\frac{1}{\varepsilon}\partial_x\Phi_2\|_{H^{\alpha}} +\|\frac{1}{\varepsilon}\partial_x\Phi_3\|_{H^{\alpha}}\\
\leq & C\|n^{\varepsilon}_R\|_{H^{\alpha}}+C(\|\Phi_1\|_{H^{\alpha}},\|\Phi_2\|_{H^{\alpha}})\\
\leq & C(\|n^{\varepsilon}_R\|_{H^{\alpha}}).
\end{split}
\end{equation}

On the other hand, by symbolic calculus, we have
\begin{equation*}
\begin{split}
-\frac{1}{\varepsilon}\partial_x\Phi_1
=&-\frac{1}{\varepsilon}Op(i\xi)Op(\frac{1}{(1+\varepsilon\phi^{(1)}) +\varepsilon\xi^2})n^{\varepsilon}_R\\
=&-\frac{1}{\varepsilon}Op(\frac{i\xi}{(1+\varepsilon\phi^{(1)}) +\varepsilon\xi^2})n^{\varepsilon}_R+\mathcal S_2n^{\varepsilon}_R,
\end{split}
\end{equation*}
where
\begin{equation}\label{equ22}
\begin{split}
\mathcal S_2=Op(\frac{\partial_x\phi^{(1)}}{(1+\varepsilon \phi^{(1)}+\varepsilon\xi^2)^2})
\end{split}
\end{equation}
is a bounded operator from $H^{\alpha}$ to $H^{\alpha}$ for every $\alpha\leq s'$. Recalling \eqref{equ44}, we obtain the decomposition of $-\frac{1}{\varepsilon}\partial_x\phi^{\varepsilon}_R$ on the RHS of \eqref{rem-2}:
\begin{equation}
\begin{split}
-\frac{1}{\varepsilon}\partial_x\phi^{\varepsilon}_R=-\frac{1}{\varepsilon}Op(\frac{i\xi}{(1+\varepsilon\phi^{(1)}) +\varepsilon\xi^2})n^{\varepsilon}_R+\mathcal S_2n^{\varepsilon}_R -\frac{1}{\varepsilon}(\partial_x\Phi_2+\partial_x\Phi_3).
\end{split}
\end{equation}

Defining
\begin{equation}\label{er2}
\begin{split}
\mathcal N_1=-\varepsilon\mathcal R_1,\ \ \ {\mathcal N}_2=\mathcal S_2n^{\varepsilon}_R-\frac{1}{\varepsilon}(\partial_x\Phi_2+\partial_x\Phi_3) -\varepsilon\mathcal R_{2},
\end{split}
\end{equation}
where $\mathcal R_1$ and $\mathcal R_2$ are defined in \eqref{rem-2} and \eqref{rem-3} respectively, we can transform the remainder system \eqref{rem} into the abstract form \eqref{equ11}. Note also that from \eqref{equ21} and \eqref{equ22}, $\mathcal N$ is bounded by \eqref{equ23}.
\end{proof}

\subsection{Energy estimates}
In this subsection, we will complete the proof of Theorem \ref{thm1} for the case $T_i>0$. For this, we need only uniform energy estimates for \eqref{equ11}, where the matrices $A_{1,\varepsilon}$ and $A_{1,\varepsilon}$ are given by \eqref{equ12} and \eqref{equ13} respectively. To further simplify the notations, we denote
\begin{equation}\label{equ50}
\begin{split}
N_{R}=&\tilde n+\varepsilon^2n^{\varepsilon}_R;\ \ \ \ \ \ \ \ \  \ U_{R}=\tilde u+\varepsilon^2u^{\varepsilon}_R,\\
n_1=&n=1+\varepsilon N_{R},\ \ \ \ \ n_2=1+\varepsilon\phi^{(1)}+\varepsilon\xi^2.
\end{split}
\end{equation}
In these notations,
\begin{equation*}
A_{\varepsilon}=i\xi\left[
\begin{array}{ccc}
(U_R-\frac{1}{\varepsilon})  &  \frac{n_1}{\varepsilon} \\
\frac1{\varepsilon(1+\varepsilon N_R)}+\frac{1}{\varepsilon n_2}  & (U_R-\frac{1}{\varepsilon})
\end{array}
\right],
\end{equation*}
whose eigenvalues are
\begin{equation*}
\lambda_{\pm}=i\xi((U_R-\frac1{\varepsilon})\pm \frac{\sqrt{n_1}}{\varepsilon}\frac{\sqrt{n_2+n_1}}{\sqrt{n_1n_2}})
\end{equation*}
and their normalized eigenvectors are
\begin{equation*}
e_{\pm}
=\left[
\begin{array}{ccc}
\frac{n_1\sqrt{n_2}}{\sqrt{n_1^2n_2+n_2+n_1}}  \\
\pm\frac{\sqrt{n_2+n_1}}{\sqrt{n_1^2n_2+n_2+n_1}}
\end{array}
\right].
\end{equation*}
Let
\begin{equation*}
P_{\varepsilon}=\left[
\begin{array}{ccc}
\frac{n_1\sqrt{n_2}}{\sqrt{n_1^2n_2+n_2+n_1}} &  \frac{n_1\sqrt{n_2}}{\sqrt{n_1^2n_2+n_2+n_1}} \\
\frac{\sqrt{n_2+n_1}}{\sqrt{n_1^2n_2+n_2+n_1}}  &
-\frac{\sqrt{n_2+n_1}}{\sqrt{n_1^2n_2+n_2+n_1}}
\end{array}
\right],
\end{equation*}
\begin{equation}\label{PInverse}
P_{\varepsilon}^{-1}=\frac12\left[
\begin{array}{ccc}
\frac{\sqrt{n_1^2n_2+n_2+n_1}}{n_1\sqrt{n_2}}  &  \frac{\sqrt{n_1^2n_2+n_2+n_1}}{\sqrt{n_2+n_1}}\\
\frac{\sqrt{n_1^2n_2+n_2+n_1}}{n_1\sqrt{n_2}}  & -\frac{\sqrt{n_1^2n_2+n_2+n_1}}{\sqrt{n_2+n_1}}
\end{array}
\right],
\end{equation}
and
\begin{equation*}
B_\varepsilon=\left[\begin{array}{ccc}
\lambda_+  &  0 \\
0 & \lambda_- \end{array}
\right],
\end{equation*}
we have the decomposition
\begin{equation}\label{diag}
A_{\varepsilon}=P_{\varepsilon}B_{\varepsilon}P_{\varepsilon}^{-1}.
\end{equation}

Now, we are ready to prove Theorem \ref{thm1} for the case $T_i>0$.
\begin{proof}[\textbf{Proof of Theorem \ref{thm1} for $T_i>0$}]
We prove this theorem by energy estimates. First, we note that for every $\varepsilon>0$, \eqref{equ11} has smooth solutions in some time interval $[0,T_{\varepsilon}]$ dependent on $\varepsilon$. Let $\mathcal C=Op(P^{-1}_{\varepsilon})$, and define the norm
$$|||w(t)|||_s^2\equiv\sum_{|\alpha|\leq s}\|\mathcal C\partial_x^{\alpha}w(t)\|^2.$$
We will bound $\partial_{t}|||w|||_{s'}^2$ for $\alpha\leq s'$. By a direct computtaion, we have
\begin{equation}\label{equ15}
\begin{split}
\partial_t\|\mathcal C\partial_x^{\alpha}w\|_{L^2}^2=&2\Re((\partial_t\mathcal C)\partial_x^{\alpha}w,\mathcal C\partial_x^{\alpha}w)-2\Re(\mathcal C[\partial_x^{\alpha},\mathcal A]w,\mathcal C\partial_x^{\alpha}w)\\
&-2\Re(\mathcal C\mathcal A\partial_x^{\alpha}w,\mathcal C\partial_x^{\alpha}w)-2\Re(\mathcal C\partial_x^{\alpha}\mathcal R,\mathcal C\partial_x^{\alpha}w)\\
=&:I+II+III+IV.
\end{split}
\end{equation}

\emph{Estimate of $I$.} Since $\mathcal C$ is a bounded family of matrix-valued PsDO of order 0, it is a uniformly bounded operator from $L^2\to L^2$. On the other hand,
\begin{equation*}
\begin{split}
\partial_t\mathcal C=&Op(\partial_t P_{\varepsilon}^{-1})=\sum_i\varepsilon^i\partial_{n^{(i)}}P_{\varepsilon}^{-1}\partial_tn^{(i)} +\varepsilon^3\partial_{n^{\varepsilon}_R}P_{\varepsilon}^{-1}\partial_tn^{\varepsilon}_R.
\end{split}
\end{equation*}
From \eqref{equ11} and the expressions for $n_1$ and $n_2$ in \eqref{equ50}, we have
\begin{equation*}
\begin{split}
\|\varepsilon\partial_tn^{\varepsilon}_R\|_{H^{s'-1}}\leq C(\|(n^{(i)},u^{(i)}, \phi^{(i)})\|_{H^{\tilde s_i}}, \|(n^{\varepsilon}_R, u^{\varepsilon}_R)\|_{H^{s'}})
\end{split}
\end{equation*}
and since $n^{(i)}$ are the first four known profiles, we have
\begin{equation*}
\begin{split}
\|\partial_tn^{(i)}\|_{H^{s'-1}}\leq C(\|n^{(i)}\|_{H^{\tilde s_i}},\|u^{(i)}\|_{H^{\tilde s_i}},\|\phi^{(i)}\|_{H^{\tilde s_i}})
\end{split}
\end{equation*}
for $i=1,2,3,4$. Therefore,
\begin{equation*}
\begin{split}
\|\partial_t\mathcal C\|_{H^{s'-1}}\leq C, \ \ \ \ s'>\frac d2+1,
\end{split}
\end{equation*}
for some $C=C(\varepsilon\|n^{\varepsilon}_R\|_{H^{s'}}, \varepsilon\|u^{\varepsilon}_R\|_{H^{s'}})$. In other words, $\partial_t\mathcal C$ is a uniformly bounded operator from $L^2$ to $L^2$. Consequently,
\begin{equation}\label{equ25}
\begin{split}
|I|\leq C_1\|\partial_x^{s'}w\|^2.
\end{split}
\end{equation}

\emph{Estimate of $II$ in \eqref{equ15}.} By the definition of and $\mathcal A_{2,\varepsilon}$, we know that
\begin{equation*}
[\partial_x^{\alpha},\mathcal A_{2,\varepsilon}]=0.
\end{equation*}
Since $\mathcal A_{1,\varepsilon}$ is a PsDO of order 1, by the commutator estimates that \cite{Majda84}, we have
\begin{equation*}
\begin{split}
\|[\partial_x^{\alpha},\mathcal A_{1,\varepsilon}]w\|_{L^2}\leq C(\|(n^{(i)},u^{(i)})\|_{H^{\tilde s_i}},\varepsilon\|(n^{\varepsilon}_R, u^{\varepsilon}_R)\|_{H^{s'}})\|w\|_{H^{\alpha}},
\end{split}
\end{equation*}
so that
\begin{equation}\label{equ28}
|II|\leq C(\|(n^{(i)},u^{(i)})\|_{H^{\tilde s_i}},\varepsilon\|(n^{\varepsilon}_R, u^{\varepsilon}_R)\|_{H^{s'}})\|w\|_{H^{\alpha}}^2.
\end{equation}

\emph{Estimate of $III$ in \eqref{equ15}.} Using the diagonalization \eqref{diag}, we split
\begin{equation}\label{equ14}
\begin{split}
(\mathcal C\mathcal A&\partial_x^{\alpha}w,\mathcal C\partial_x^{\alpha}w)\\
=&(\mathcal C\mathcal A\partial_x^{\alpha}w,\mathcal C\partial_x^{\alpha}w) -(Op(B_{\varepsilon}P_{\varepsilon}^{-1})\partial_x^{\alpha}w,\mathcal C\partial_x^{\alpha}w)\\
&+(Op(B_{\varepsilon}P_{\varepsilon}^{-1})\partial_x^{\alpha}w,\mathcal C\partial_x^{\alpha}w) -(Op(B_{\varepsilon})Op(P_{\varepsilon}^{-1})\partial_x^{\alpha}w,\mathcal C\partial_x^{\alpha}w)\\
&+(Op(B_{\varepsilon})Op(P_{\varepsilon}^{-1})\partial_x^{\alpha}w,\mathcal C\partial_x^{\alpha}w)\\
=&III_1+III_2+III_3.
\end{split}
\end{equation}
Let us first consider the term $III_1$. Since $A_{\varepsilon}$ depends on $n^{(i)}$, $n^{\varepsilon}_R$ in the form of $\varepsilon^in^{(i)}$, $\varepsilon^3 n^{\varepsilon}_R$ for $i=1,2,3,4$, $D_{n^{(i)}}A_{\varepsilon}$ and $D_{n^{\varepsilon}_R}A_{\varepsilon}$ are all bounded families of symbols of order 1. Furthermore, $P_{\varepsilon}^{-1}$ is a uniformly bounded family of symbols of order 0, and we have
\begin{equation*}
\begin{split}
\|\mathcal C\mathcal A-Op(B_{\varepsilon}P_{\varepsilon}^{-1})\|_{L^2\to L^2}\leq C(\|n^{(i)}\|_{H^{\tilde s_i}},\varepsilon\|n^{\varepsilon}_R\|_{H^{s'}}).
\end{split}
\end{equation*}
Similarly, since $D_{\xi}^{\alpha}B_{\varepsilon}\nabla_v^{\alpha}P_{\varepsilon}^{-1}$ are bounded symbols of order $1-\alpha$ for $III_2$, we have
\begin{equation*}
\begin{split}
\|Op(B_{\varepsilon})Op(P_{\varepsilon}^{-1}) -Op(B_{\varepsilon}P_{\varepsilon}^{-1})\|_{L^2\to L^2}\leq C(\|n^{(i)}\|_{H^{\tilde s_i}},\varepsilon\|n^{\varepsilon}_R\|_{H^{s'}}).
\end{split}
\end{equation*}
Finally, We consider $III_3$. Since $\lambda_{\pm}$ are purely imaginary when $\varepsilon<\varepsilon_2$ is sufficiently small, and $B_{\varepsilon}=diag[\lambda_{+},\lambda_{-}]$ is diagonal, $B_{\varepsilon}^*=-B_{\varepsilon}$. Therefore, by using the properties of the adjoint operator (symbolic calculus), we have $B_{\varepsilon}^*\in S^1$ and
\begin{equation*}
\begin{split}
Op(B_{\varepsilon})^*\sim \sum_{\alpha}\frac{1}{\alpha!}\partial_{\xi}^{\alpha}D_x^{\alpha}\bar B_{\varepsilon}(x,\xi).
\end{split}
\end{equation*}
On the other hand, since $B_{\varepsilon}$ depends on $n^{(i)}$ and $n^{\varepsilon}_R$ through $\varepsilon^in^{(i)}$ and $\varepsilon^3n^{\varepsilon}_R$, there exists a bounded operator $\tilde B_{\varepsilon}$ from $L^2\to L^2$ such that
\begin{equation*}
\begin{split}
\tilde B_{\varepsilon}=Op(B_{\varepsilon})+Op(B_{\varepsilon})^*
\end{split}
\end{equation*}
with bound
\begin{equation*}
\begin{split}
\|\tilde B_{\varepsilon}\|_{L^2\to L^2}\leq C(\|n^{(i)}\|_{H^{\tilde s_i}},\varepsilon\|n^{\varepsilon}_R\|_{H^{s'}}).
\end{split}
\end{equation*}
Consequently, we obtain from \eqref{equ14}
\begin{equation}\label{equ26}
|III|\leq C(\|n^{(i)}\|_{H^{\tilde s_i}},\varepsilon\|n^{\varepsilon}_R\|_{H^{s'}}) (\|n^{\varepsilon}_R\|^2_{H^{s'}}+\|u^{\varepsilon}_R\|^2_{H^{s'}}).
\end{equation}

\emph{Estimate of $IV$ in \eqref{equ15}.} Recall $\mathcal R(w)=\mathcal F_{\varepsilon}(w)w+\mathcal N(w)$ in \eqref{equ51}. Since $\mathcal R$ is a nonlinear bounded operator, from \eqref{equ23} we have for every $\alpha\geq2$,
\begin{equation*}
\|\mathcal R(w)\|_{H^{\alpha}}\leq C_{\alpha}(\|n^{\varepsilon}_R\|_{H^{s'}}+\|u^{\varepsilon}_R\|_{H^{s'}})
\end{equation*}
for some constant
\begin{equation*}
\begin{split}
C_\alpha=C_\alpha(\|(n^{(i)},u^{(i)},\phi^{(i)})\|_{H^{\tilde s_i}}, \varepsilon\|n^{\varepsilon}_R\|_{H^{s'}},\varepsilon\|u^{\varepsilon}_R\|_{H^{s'}}).
\end{split}
\end{equation*}
Since $\mathcal C\in S^0$ uniformly in $\varepsilon$, we obtain
\begin{equation}\label{equ27}
\|IV\|_{s'}\leq C(1+\|n^{\varepsilon}_R\|^2_{H^{s'}}+\|u^{\varepsilon}_R\|^2_{H^{s'}}).
\end{equation}
Therefore, from \eqref{equ15}, \eqref{equ28}, \eqref{equ26} and \eqref{equ27} we obtain
\begin{equation*}
\partial_t|||w|||_{s'}^2\leq C(\|n^{(i)}\|_{H^{\tilde s_i}},\varepsilon\|n^{\varepsilon}_R\|_{H^{s'}})(1+\|w\|^2_{H^{s'}}).
\end{equation*}

We claim that $\|\cdot\|_{H^{s'}}$ and $|||\cdot|||_{s'}$ are equivalent. Since $\mathcal C$ is a bounded family symbols of $S^0$, we have
\begin{equation*}
\|\mathcal C\partial_x^{\alpha}w\|_{L^2}^2\leq C(\|n^{(i)}\|_{H^{\tilde s_i}}, \varepsilon\|n^{\varepsilon}_R\|_{H^{s'}})\|\partial_x^{\alpha}w\|_{L^2}^2, \ \ \alpha\leq s'
\end{equation*}
and hence
\begin{equation*}
|||w|||_{s'}^2\leq C(\|n^{(i)}\|_{H^{\tilde s_i}}, \varepsilon\|n^{\varepsilon}_R\|_{H^{s'}})\|w\|_{H^{s'}}^2.
\end{equation*}
On the other hand, since $P_{\varepsilon}^{-1}$ and $P_{\varepsilon}$ depend on $\xi$, $n^{(i)}$, $n^{\varepsilon}_R$ through $\sqrt{\varepsilon}\xi$, $\varepsilon^in^{(i)}$, $\varepsilon^3n^{\varepsilon}_R$, we therefore have
\begin{equation*}
\begin{split}
Op(P_\varepsilon)Op(P^{-1}_{\varepsilon})=I+\varepsilon^{3/2}\mathcal P,
\end{split}
\end{equation*}
for some $L^2\to L^2$ bounded operator $\mathcal P$. Hence,
\begin{equation*}
\begin{split}
\|\partial_{x}^{\alpha}w\|_{L^2}^2\leq & \|Op(P_\varepsilon)Op(P^{-1}_{\varepsilon})\partial_{x}^{\alpha}w\|_{L^2}^2 +\varepsilon^{3}C(\|n^{(i)}\|_{H^{\tilde s_i}}, \varepsilon\|n^{\varepsilon}_R\|_{H^{s'}})\|\partial_{x}^{\alpha}w\|_{L^2}^2.
\end{split}
\end{equation*}
By the $L^2$-boundedness of $Op(P_{\varepsilon})$, when $\varepsilon$ is sufficiently small we have
\begin{equation*}
\|\partial_{x}^{\alpha}w\|_{L^{2}}^2\leq 2C(\|n^{(i)}\|_{H^{\tilde s_i}}, \varepsilon\|n^{\varepsilon}_R\|_{H^{s'}})
\|Op(P^{-1}_{\varepsilon})\partial_{x}^{\alpha}w\|_{L^2}^2.
\end{equation*}
Summation over $|\alpha|\leq s'$ yields the equivalence between $\|\cdot\|_{H^{s'}}$ and $|||\cdot|||_{s'}$.

Therefore, we finally obtain the estimate of the form
\begin{equation*}
\partial_t|||w|||_{s'}^2\leq C(\|n^{(i)}\|_{H^{s'}},\varepsilon\|n^{\varepsilon}_R\|_{H^{s'}}) (1+|||w|||_{s'}^2).
\end{equation*}
Since $C$ depends on $\|n^{\varepsilon}_R\|_{H^{s'}}$ through $\varepsilon\|n^{\varepsilon}_R\|_{H^{s'}}$, we obtain an existence time $T_{\varepsilon}\geq \tau$ for any $\tau>0$ uniformly in $\varepsilon$. From the decomposition of $\phi^{\varepsilon}_R$ in \eqref{equ44}, we recover the uniform in $\varepsilon$ estimate for $\|\phi^{\varepsilon}_R\|_{H^{s'}}$.

The proof of Theorem \ref{thm1} for the case $T_i>0$ is then complete for $s'=2$. We indeed have proved a stronger result that holds for any $s'\geq2$ integers.
\end{proof}

\section{Uniform energy estimates: the case $T_i=0$}
\setcounter{section}{3}\setcounter{equation}{0}
In the cold plasma $(T_i=0)$ case, the procedure in Section 2 is not applicable for two main reasons: the system cannot be symmetrized and $P_{\varepsilon}^{-1}$ is not a PsDO of order $0$. In this section, we handle this case, which requires a combination of energy method and analysis of remainder equation \eqref{rem}.

Throughout this section, we set $T_i=0$ and renormalize all the other constants to be $1$. Hence $V=1$, and from \eqref{rem} we obtain the following remainder equation
\begin{subequations}\label{rem2}
\begin{numcases}{}
\partial_tn^{\varepsilon}_R-\frac{1-u}{\varepsilon}\partial_xn^{\varepsilon}_R +\frac{n}{\varepsilon}\partial_xu^{\varepsilon}_R+\partial_x\tilde nu^{\varepsilon}_R+\partial_x\tilde un^{\varepsilon}_R +\varepsilon{\mathcal R_1}=0 \label{rem2-1}\\
\partial_tu^{\varepsilon}_R-\frac{1-u}{\varepsilon}\partial_xu^{\varepsilon}_R +\partial_x\tilde uu^{\varepsilon}_R+\varepsilon\mathcal R_{2}=-\frac{1}{\varepsilon}\partial_x\phi_R^{\varepsilon}\label{rem2-2}\\
\varepsilon\partial_x^2\phi^{\varepsilon}_R=(\phi^{\varepsilon}_R +\varepsilon \phi^{(1)}\phi^{\varepsilon}_R-n^{\varepsilon}_R)+\varepsilon^2{\mathcal R}_3,\label{rem2-3}
\end{numcases}
\end{subequations}
where 
$\mathcal R_1, \mathcal R_2$ and $\mathcal R_3$ are given by \eqref{rrr} with $T_i=0$. In particular, $\mathcal R_1$ and $\mathcal R_2$ depend only on $(n^{(i)},u^{(i)})$ and $\mathcal R_3$ does not involve any derivatives of $\phi^{\varepsilon}_R$.

In the following, we will give uniform estimates of system \eqref{rem2}. To simplify the proof slightly, we will assume that \eqref{rem2} has smooth solutions in very small time $\tau_{\varepsilon}>0$ dependent on $\varepsilon>0$. Recall that
\begin{equation}\label{def-A}
\begin{split}
|\!|\!|(u^{\varepsilon}_R,\phi^{\varepsilon}_R)|\!|\!|^2_{\varepsilon}=\|u^{\varepsilon}_R\|_{H^2}^2 +\|\phi^{\varepsilon}_R\|_{H^2}^2 +\varepsilon\|\partial_x^3u^{\varepsilon}_R\|^2 +\varepsilon\|\partial_x^3\phi^{\varepsilon}_R\|^2 +\varepsilon^2\|\partial_x^4\phi^{\varepsilon}_R\|^2.
\end{split}
\end{equation}
Let $\tilde C$ be a constant independent of $\varepsilon$, which will be determined later, much larger than the bound $|\!|\!|(u^{\varepsilon}_R,\phi^{\varepsilon}_R)(0)|\!|\!|^2_{\varepsilon}$ of the initial data. It is classical that there exists $\tau_{\varepsilon}>0$ such that on $[0,\tau_{\varepsilon}]$,
\begin{equation*}
\begin{split}
\|n^{\varepsilon}_R\|_{H^2}^2,\ \ |\!|\!|(u^{\varepsilon}_R,\phi^{\varepsilon}_R)(t)|\!|\!|^2_{\varepsilon} \leq \tilde C.
\end{split}
\end{equation*}
As a direct corollary, there exists some $\varepsilon_1>0$ such that $n$ is bounded from above and below $1/2<n<3/2$ and $u$ is bounded by $|u|<1/2$ when $\varepsilon<\varepsilon_1$. Since $\mathcal R_3$ is a smooth function of $\phi^{\varepsilon}_R$ (see Appendix), there exists some constant $C_1=C_1({\varepsilon}\tilde C)$ for any $\alpha,\beta\geq0$ such that
\begin{equation}\label{equ17}
\begin{split}
|\partial^{\alpha}_{\phi^{(i)}}\partial^{\beta}_{\phi^{\varepsilon}_R}\mathcal R_3|\leq C_1=C_1({\varepsilon}\tilde C),
\end{split}
\end{equation}
where $C_1(\cdot)$ can be chose to be nondecreasing in its argument.

We will show that for any given $\tau>0$, there is some $\varepsilon_0>0$, such that the existence time $\tau_{\varepsilon}>\tau$ for any $0<\varepsilon<\varepsilon_0$. We first prove the following Lemma \ref{L1}-\ref{L3}, in which we bound $n^{\varepsilon}_R$ and $\partial_t\phi^{\varepsilon}_R$ in terms of $\phi^{\varepsilon}_R$.
\begin{lemma}\label{L1}
Let $(n^{\varepsilon}_R,u^{\varepsilon}_R,\phi^{\varepsilon}_R)$ be a solution to \eqref{rem2} and $\alpha\geq 0$ be an integer. There exist some constants $0<\varepsilon_1<1$ and $C_1=C_1(\varepsilon\tilde C)$ such that for every $0<\varepsilon<\varepsilon_1$,
\begin{equation}\label{equ31}
\begin{split}
C_1^{-1}\|\partial_x^{\alpha}n^{\varepsilon}_R\|^2\leq & \|\partial_x^{\alpha}\phi^{\varepsilon}_R\|^2 +\varepsilon\|\partial_x^{\alpha+1}\phi^{\varepsilon}_R\|^2 +\varepsilon^2\|\partial_x^{\alpha+2}\phi^{\varepsilon}_R\|^2\leq C_1\|\partial_x^{\alpha}n^{\varepsilon}_R\|^2.
\end{split}
\end{equation}
\end{lemma}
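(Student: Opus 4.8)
The plan is to treat \eqref{rem2-3} as an elliptic identity relating $n^{\varepsilon}_R$ to $\phi^{\varepsilon}_R$ and to read off both inequalities by testing against $\partial_x^{2\alpha}\phi^{\varepsilon}_R$ (equivalently, by differentiating $\alpha$ times and pairing with $\partial_x^{\alpha}\phi^{\varepsilon}_R$). Rewrite \eqref{rem2-3} as
\begin{equation*}
n^{\varepsilon}_R=(1+\varepsilon\phi^{(1)})\phi^{\varepsilon}_R-\varepsilon\partial_x^2\phi^{\varepsilon}_R+\varepsilon^2\mathcal R_3.
\end{equation*}
Apply $\partial_x^{\alpha}$, take the $L^2$ inner product with $\partial_x^{\alpha}\phi^{\varepsilon}_R$, and integrate by parts in the term coming from $-\varepsilon\partial_x^{2+\alpha}\phi^{\varepsilon}_R$, producing $+\varepsilon\|\partial_x^{\alpha+1}\phi^{\varepsilon}_R\|^2$ together with lower-order commutator terms in which at least one derivative falls on $\phi^{(1)}$; these are controlled by $\|\phi^{\varepsilon}_R\|_{H^{\alpha}}$ using $\|\phi^{(1)}\|_{H^{\tilde s_1}}$. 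Since $1+\varepsilon\phi^{(1)}\geq 1/2$ for $\varepsilon<\varepsilon_1$, the principal terms give a coercive lower bound; the term $\varepsilon^2\partial_x^{\alpha}\mathcal R_3$ is harmless because, by \eqref{equ17}, $\mathcal R_3$ is a smooth function of $\phi^{\varepsilon}_R$ carrying no derivatives, so $\|\partial_x^{\alpha}\mathcal R_3\|\leq C_1\|\phi^{\varepsilon}_R\|_{H^{\alpha}}$ and it is absorbed. This yields the upper bound $\|\partial_x^{\alpha}n^{\varepsilon}_R\|^2\leq C_1(\|\partial_x^{\alpha}\phi^{\varepsilon}_R\|^2+\varepsilon\|\partial_x^{\alpha+1}\phi^{\varepsilon}_R\|^2+\varepsilon^2\|\partial_x^{\alpha+2}\phi^{\varepsilon}_R\|^2)$ — actually one only needs up to $\varepsilon\|\partial_x^{\alpha+1}\phi^{\varepsilon}_R\|^2$ here, but keeping the $\varepsilon^2$ term is convenient for the converse.

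For the reverse inequality $C_1^{-1}(\|\partial_x^{\alpha}\phi^{\varepsilon}_R\|^2+\varepsilon\|\partial_x^{\alpha+1}\phi^{\varepsilon}_R\|^2+\varepsilon^2\|\partial_x^{\alpha+2}\phi^{\varepsilon}_R\|^2)\leq \|\partial_x^{\alpha}n^{\varepsilon}_R\|^2$, I would again differentiate \eqref{rem2-3} $\alpha$ times and this time pair with $\partial_x^{\alpha}\phi^{\varepsilon}_R$ to extract $\varepsilon\|\partial_x^{\alpha+1}\phi^{\varepsilon}_R\|^2+\int(1+\varepsilon\phi^{(1)})(\partial_x^{\alpha}\phi^{\varepsilon}_R)^2$ on one side and $\int\partial_x^{\alpha}n^{\varepsilon}_R\,\partial_x^{\alpha}\phi^{\varepsilon}_R$ (plus absorbable lower-order and $\varepsilon^2\mathcal R_3$ terms) on the other; Cauchy–Schwarz and the coercivity $1+\varepsilon\phi^{(1)}\geq 1/2$ give control of $\|\partial_x^{\alpha}\phi^{\varepsilon}_R\|^2+\varepsilon\|\partial_x^{\alpha+1}\phi^{\varepsilon}_R\|^2$ by $\|\partial_x^{\alpha}n^{\varepsilon}_R\|^2$. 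To recover the last piece $\varepsilon^2\|\partial_x^{\alpha+2}\phi^{\varepsilon}_R\|^2$, pair $\partial_x^{\alpha}$ of the rewritten equation directly with $\varepsilon\,\partial_x^{\alpha+2}\phi^{\varepsilon}_R$: this produces $\varepsilon^2\|\partial_x^{\alpha+2}\phi^{\varepsilon}_R\|^2$ bounded by $\|\partial_x^{\alpha}n^{\varepsilon}_R\|\cdot\varepsilon\|\partial_x^{\alpha+2}\phi^{\varepsilon}_R\|$ plus $\varepsilon\|\partial_x^{\alpha}((1+\varepsilon\phi^{(1)})\phi^{\varepsilon}_R+\varepsilon^2\mathcal R_3)\|\cdot\|\partial_x^{\alpha+2}\phi^{\varepsilon}_R\|$ — absorb the last factor into $\tfrac12\varepsilon^2\|\partial_x^{\alpha+2}\phi^{\varepsilon}_R\|^2$, and use the already-established bound on $\|\partial_x^{\alpha}\phi^{\varepsilon}_R\|$ and the $\varepsilon$-smallness to close. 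Collecting the three estimates and adjusting $C_1$ gives \eqref{equ31}.

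The only mild subtlety — the step I expect to need the most care — is keeping track of the $\varepsilon$-weights so that every error term genuinely carries a spare power of $\varepsilon$ (or a factor $\varepsilon\tilde C$ through $C_1$) and is therefore absorbable once $\varepsilon<\varepsilon_1$; in particular the commutators $[\partial_x^{\alpha},\varepsilon\phi^{(1)}]$ must be split so that the term with no derivative on $\phi^{(1)}$ stays on the coercive side while the rest, each carrying an explicit $\varepsilon$, moves to the right. Everything else is routine integration by parts together with the a priori bound $\varepsilon\|\phi^{\varepsilon}_R\|_{H^{\alpha}}\lesssim \varepsilon\tilde C$ and \eqref{equ17}. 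Note that the constant $C_1$ produced this way depends on $\varepsilon\tilde C$ only (through \eqref{equ17} and through $\|\varepsilon\phi^{(1)}\|_{L^\infty}<1/2$), as claimed.
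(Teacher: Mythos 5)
Your overall strategy matches the paper's: test the (differentiated) Poisson equation \eqref{rem2-3} against $\partial_x^{\alpha}\phi^{\varepsilon}_R$ and against $\varepsilon\partial_x^{\alpha+2}\phi^{\varepsilon}_R$, use the coercivity $1+\varepsilon\phi^{(1)}\geq 1/2$, and absorb the $\varepsilon^2\mathcal R_3$ term via \eqref{equ29}. Paragraphs 2 and 3 of your proposal correctly reproduce the paper's estimates for the right-hand inequality of \eqref{equ31} (i.e. middle $\leq C_1\|\partial_x^{\alpha}n^{\varepsilon}_R\|^2$), matching the steps yielding \eqref{equ55} and \eqref{equ56}.

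However, paragraph 1 contains a direction error. After pairing $\partial_x^{\alpha}$ of the rewritten equation with $\partial_x^{\alpha}\phi^{\varepsilon}_R$ and integrating by parts, the coercivity of $1+\varepsilon\phi^{(1)}$ together with Cauchy--Schwarz applied to $\int\partial_x^{\alpha}n^{\varepsilon}_R\,\partial_x^{\alpha}\phi^{\varepsilon}_R$ yields
\begin{equation*}
\tfrac12\|\partial_x^{\alpha}\phi^{\varepsilon}_R\|^2+\varepsilon\|\partial_x^{\alpha+1}\phi^{\varepsilon}_R\|^2\leq C_1\|\partial_x^{\alpha}n^{\varepsilon}_R\|^2,
\end{equation*}
which is the \emph{right}-hand inequality of \eqref{equ31}, not the left-hand one as you claim. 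The left-hand inequality $\|\partial_x^{\alpha}n^{\varepsilon}_R\|^2\leq C_1(\text{middle})$ does not follow from the pairing/coercivity argument; it should instead be read directly off the equation
\begin{equation*}
\partial_x^{\alpha}n^{\varepsilon}_R=\partial_x^{\alpha}\bigl[(1+\varepsilon\phi^{(1)})\phi^{\varepsilon}_R\bigr]-\varepsilon\partial_x^{\alpha+2}\phi^{\varepsilon}_R+\varepsilon^2\partial_x^{\alpha}\mathcal R_3
\end{equation*}
by the triangle inequality and \eqref{equ29}, exactly as the paper does in \eqref{equ57}. As written, your proposal never actually supplies that step, so the left-hand inequality of \eqref{equ31} is unproved. (A second, minor slip: you attribute the commutator terms with $\phi^{(1)}$ to the integration by parts of $-\varepsilon\partial_x^{\alpha+2}\phi^{\varepsilon}_R$, but that term has constant coefficient and produces no commutators; the commutators come from $\partial_x^{\alpha}[(1+\varepsilon\phi^{(1)})\phi^{\varepsilon}_R]$.) Both issues are easy to repair and the ingredients you list are all the right ones.
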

\begin{proof}
When $\alpha=0$, taking inner product of \eqref{rem2-3} with $\phi^{\varepsilon}_R$, we have
\begin{equation}\label{equ30}
\begin{split}
\|\phi^{\varepsilon}_R\|^2+\varepsilon\|\partial_x\phi^{\varepsilon}_R\|^2=&\int n^{\varepsilon}_R\phi^{\varepsilon}_R -\int\varepsilon\phi^{(1)}|\phi^{\varepsilon}_R|^2-\int\varepsilon^2\mathcal R_3\phi^{\varepsilon}_R.
\end{split}
\end{equation}
From \eqref{equ29} in the Appendix, we have
\begin{equation*}
\begin{split}
\|\mathcal R_3\|_{L^2}\leq C_1(\varepsilon\tilde C)\|\phi^{\varepsilon}_R\|.
\end{split}
\end{equation*}
When $\varepsilon<\varepsilon_1$ is sufficiently small, $C_1(\varepsilon\tilde C)\leq C_1(1)$ is a fixed constant, and therefore
\begin{equation}\label{equ52}
\begin{split}
|\int\varepsilon^2\mathcal R_3\phi^{\varepsilon}_R|\leq \frac18\|\phi^{\varepsilon}_R\|^2.
\end{split}
\end{equation}
Since $\phi^{(1)}$ is known and is bounded in $L^{\infty}$, there exists some $0<\varepsilon_1<1$ such that for $0<\varepsilon<\varepsilon_1$,
\begin{equation}\label{equ53}
\begin{split}
\left|\int\varepsilon\phi^{(1)}|\phi^{\varepsilon}_R|^2\right|\leq & \frac18\|\phi^{\varepsilon}_R\|^2.
\end{split}
\end{equation}
By applying the H\"older's inequality to the first term on the RHS of \eqref{equ30}, we have
\begin{equation}\label{equ54}
\begin{split}
|\int n^{\varepsilon}_R\phi^{\varepsilon}_R| \leq \frac14\|\phi^{\varepsilon}_R\|^2+\|n^{\varepsilon}_R\|^2.
\end{split}
\end{equation}
By \eqref{equ30}-\eqref{equ54},
\begin{equation*}
\begin{split}
\|\phi^{\varepsilon}_R\|^2+\varepsilon\|\partial_x\phi^{\varepsilon}_R\|^2 \leq& \frac12\|\phi^{\varepsilon}_R\|^2+\|n^{\varepsilon}_R\|^2.
\end{split}
\end{equation*}
Hence, we have shown that there exists some $\varepsilon_1>0$ such that for $0<\varepsilon<\varepsilon_1$,
\begin{equation}\label{equ55}
\begin{split}
\|\phi^{\varepsilon}_R\|^2+\varepsilon\|\partial_x\phi^{\varepsilon}_R\|^2\leq & 2\|n^{\varepsilon}_R\|^2.
\end{split}
\end{equation}
Taking inner product with $\varepsilon\partial_x^2\phi^{\varepsilon}_R$ and integration by parts, we have similarly
\begin{equation}\label{equ56}
\begin{split}
\varepsilon\|\partial_x\phi^{\varepsilon}_R\|^2+\varepsilon^2\|\partial_x^2\phi^{\varepsilon}_R\|^2\leq & 2\|n^{\varepsilon}_R\|^2.
\end{split}
\end{equation}
On the other hand, from the equation \eqref{rem2-3}, there exist some $C$ such that
\begin{equation}\label{equ57}
\begin{split}
\|n^{\varepsilon}_R\|^2\leq & \|\phi^{\varepsilon}_R\|^2+\varepsilon^2\|\partial_x^2\phi^{\varepsilon}_R\|^2 +C\varepsilon^2\|\phi^{\varepsilon}_R\|^2+{(C_1(1))}^2\|\phi^{\varepsilon}_R\|^2\\
\leq & C(\|\phi^{\varepsilon}_R\|^2+\varepsilon^2\|\partial_x^2\phi^{\varepsilon}_R\|^2).
\end{split}
\end{equation}
Putting \eqref{equ55}-\eqref{equ57} together, we deduce the inequality \eqref{equ31} for $\alpha=0$.

For higher order inequalities, we differentiate the Poisson equation \eqref{rem2-3} with $\partial_x^{\alpha}$ and then take inner product with $\partial_x^{\alpha}\phi^{\varepsilon}_R$ and $\varepsilon\partial_x^{\alpha+2}\phi^{\varepsilon}_R$ separately. The Lemma follows by the same procedure of the case $\alpha=0$.
\end{proof}

Recall $|\!|\!|(u^{\varepsilon}_R,\phi^{\varepsilon}_R)|\!|\!|_{\varepsilon}$ in \eqref{def-A}. We remark that only $\|n^{\varepsilon}_R\|_{H^2}$ can be bounded in terms of $|\!|\!|(u^{\varepsilon}_R,\phi^{\varepsilon}_R)|\!|\!|_{\varepsilon}$ and no higher order derivatives of $n^{\varepsilon}_R$ is allowed in Lemma \ref{L1}. This is one of the reasons that why the estimate in the section is delicate.

\begin{lemma}\label{L2}
Let $(n^{\varepsilon}_R,u^{\varepsilon}_R,\phi^{\varepsilon}_R)$ be a solution to \eqref{rem2}. There exist some constant $C$ and $C_1=C_1(\varepsilon\tilde C)$, such that
\begin{equation}\label{equ32}
\begin{split}
\|\varepsilon\partial_tn^{\varepsilon}_R\|^2\leq & C(\|\phi^{\varepsilon}_R\|_{H^1}^2+\|u^{\varepsilon}_R\|_{H^1}^2 +\varepsilon\|\partial_x^2\phi^{\varepsilon}_R\|^2 +\varepsilon^2\|\partial_x^{3}\phi^{\varepsilon}_R\|^2)+C\varepsilon.
\end{split}
\end{equation}
\begin{equation}\label{equ33}
\begin{split}
\|\varepsilon\partial_{tx}n^{\varepsilon}_R\|^2\leq & C_1(\|u^{\varepsilon}_R\|_{H^2}^2+\|\phi^{\varepsilon}_R\|_{H^2}^2 +\varepsilon\|\partial_x^3\phi^{\varepsilon}_R\|^2 +\varepsilon^2\|\partial_x^{4}\phi^{\varepsilon}_R\|^2)+C\varepsilon.
\end{split}
\end{equation}
By \eqref{def-A}, it is useful to rewrite in the form
\begin{equation*}
\begin{split}
\|\varepsilon\partial_{t}n^{\varepsilon}_R\|_{H^1}^2\leq & C_1|\!|\!|(u^{\varepsilon}_R,\phi^{\varepsilon}_R)|\!|\!|^2_{\varepsilon}+C\varepsilon.
\end{split}
\end{equation*}
\end{lemma}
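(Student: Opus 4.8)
The plan is to use the mass equation \eqref{rem2-1} to solve directly for $\partial_t n^\varepsilon_R$, and then estimate the resulting right-hand side term by term, using Lemma \ref{L1} to convert any appearance of $n^\varepsilon_R$ (or its derivatives) into controllable quantities involving $\phi^\varepsilon_R$. Rewriting \eqref{rem2-1}, we have
\begin{equation*}
\varepsilon\partial_t n^\varepsilon_R = (1-u)\partial_x n^\varepsilon_R - n\,\partial_x u^\varepsilon_R - \varepsilon\partial_x\tilde n\, u^\varepsilon_R - \varepsilon\partial_x\tilde u\, n^\varepsilon_R - \varepsilon^2\mathcal R_1.
\end{equation*}
The key observation is that the singular prefactor $1/\varepsilon$ has been absorbed: after multiplying the equation by $\varepsilon$, the only genuinely dangerous term is $(1-u)\partial_x n^\varepsilon_R$, since $\partial_x n^\varepsilon_R$ is \emph{not} controlled by $|\!|\!|(u^\varepsilon_R,\phi^\varepsilon_R)|\!|\!|_\varepsilon$ (Lemma \ref{L1} only gives $\|n^\varepsilon_R\|$, not $\|\partial_x n^\varepsilon_R\|$). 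So the first thing I would do is rewrite $(1-u)\partial_x n^\varepsilon_R$ using $u = \varepsilon\tilde u + \varepsilon^3 u^\varepsilon_R$, so that $1-u = 1-\varepsilon\tilde u-\varepsilon^3 u^\varepsilon_R$; the leading term $\partial_x n^\varepsilon_R$ then has coefficient $O(1)$, which looks bad, but in fact we must not differentiate $n^\varepsilon_R$ — instead we keep $\partial_x n^\varepsilon_R$ and apply the $\alpha=1$ case of Lemma \ref{L1}, which bounds $\|\partial_x n^\varepsilon_R\|^2$ by $\|\partial_x\phi^\varepsilon_R\|^2 + \varepsilon\|\partial_x^2\phi^\varepsilon_R\|^2 + \varepsilon^2\|\partial_x^3\phi^\varepsilon_R\|^2$. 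This is precisely the combination appearing on the right of \eqref{equ32}.

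For \eqref{equ32} I would then estimate each remaining term in $L^2$: the term $n\,\partial_x u^\varepsilon_R$ is bounded by $C\|\partial_x u^\varepsilon_R\|\le C\|u^\varepsilon_R\|_{H^1}$ using $1/2<n<3/2$; the terms $\varepsilon\partial_x\tilde n\,u^\varepsilon_R$ and $\varepsilon\partial_x\tilde u\,n^\varepsilon_R$ are bounded by $C\varepsilon(\|u^\varepsilon_R\| + \|n^\varepsilon_R\|)$ using that the profiles $n^{(i)},u^{(i)}$ are smooth and bounded, and then $\|n^\varepsilon_R\|$ is absorbed via the $\alpha=0$ case of Lemma \ref{L1}; and $\varepsilon^2\mathcal R_1$ is bounded by $C\varepsilon^2$ since $\mathcal R_1$ depends only on the known profiles and their derivatives (this is the source of the additive $C\varepsilon$ after squaring — actually $C\varepsilon^2\le C\varepsilon$). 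Squaring and summing gives \eqref{equ32}. For the $H^1$ bound \eqref{equ33}, I would differentiate the mass equation \eqref{rem2-1} once in $x$ before solving for $\varepsilon\partial_{tx}n^\varepsilon_R$; the new feature is a commutator-type term $\partial_x[(1-u)]\partial_x n^\varepsilon_R = -\varepsilon\partial_x\tilde u\,\partial_x n^\varepsilon_R - \varepsilon^3\partial_x u^\varepsilon_R\,\partial_x n^\varepsilon_R$, which carries a gain of $\varepsilon$ and so is harmless after using Lemma \ref{L1} at level $\alpha=1$ together with the a priori bound $\|n^\varepsilon_R\|_{H^2}\le\tilde C$ (so $\varepsilon^3\|\partial_x u^\varepsilon_R\partial_x n^\varepsilon_R\|$ is fine). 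The leading term is now $(1-u)\partial_x^2 n^\varepsilon_R$, bounded using the $\alpha=2$ case of Lemma \ref{L1}, i.e. by $\|\partial_x^2\phi^\varepsilon_R\| + \sqrt\varepsilon\|\partial_x^3\phi^\varepsilon_R\| + \varepsilon\|\partial_x^4\phi^\varepsilon_R\|$, which after squaring matches the right-hand side of \eqref{equ33}; the term $n\partial_x^2 u^\varepsilon_R$ gives $\|u^\varepsilon_R\|_{H^2}$, and the remaining lower-order terms all carry an explicit $\varepsilon$ and are absorbed as before. Folding $\sqrt\varepsilon\|\partial_x^3\phi^\varepsilon_R\|$ into $\varepsilon\|\partial_x^3\phi^\varepsilon_R\|^2$ is legitimate because that term already appears (without the extra $\varepsilon$) inside $|\!|\!|\cdot|\!|\!|_\varepsilon$ — wait, more precisely the squared quantity $\varepsilon\|\partial_x^3\phi^\varepsilon_R\|^2$ is exactly a summand of $|\!|\!|\cdot|\!|\!|^2_\varepsilon$, so it is directly admissible.

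The main obstacle — and the conceptual point the authors are flagging with their remark after Lemma \ref{L1} — is the bookkeeping of $\varepsilon$ powers: one must verify that \emph{every} term produced by solving the (singular) continuity equation for $\partial_t n^\varepsilon_R$ either carries enough powers of $\varepsilon$ to be absorbed into the $C\varepsilon$ remainder, or reduces via Lemma \ref{L1} to one of the five quantities in $|\!|\!|(u^\varepsilon_R,\phi^\varepsilon_R)|\!|\!|^2_\varepsilon$. The delicate case is exactly the transport term $(1-u)\partial_x^{1+\beta}n^\varepsilon_R$ with $\beta=0,1$: it has an $O(1)$ coefficient, so there is no $\varepsilon$ to spare, and the \emph{only} thing that saves it is that Lemma \ref{L1} trades one derivative of $n^\varepsilon_R$ for a half-power of $\varepsilon$ on the corresponding derivative of $\phi^\varepsilon_R$. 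Once this trade is made carefully the rest is routine. Finally, combining \eqref{equ32} and \eqref{equ33} and recognizing the right-hand sides as submultiples of $|\!|\!|(u^\varepsilon_R,\phi^\varepsilon_R)|\!|\!|^2_\varepsilon$ (note $\|u^\varepsilon_R\|_{H^2}^2+\|\phi^\varepsilon_R\|_{H^2}^2+\varepsilon\|\partial_x^3\phi^\varepsilon_R\|^2+\varepsilon^2\|\partial_x^4\phi^\varepsilon_R\|^2$ are all summands of $|\!|\!|\cdot|\!|\!|_\varepsilon^2$, and $\|u^\varepsilon_R\|_{H^1}\le\|u^\varepsilon_R\|_{H^2}$) yields the stated rewriting $\|\varepsilon\partial_t n^\varepsilon_R\|_{H^1}^2\le C_1|\!|\!|(u^\varepsilon_R,\phi^\varepsilon_R)|\!|\!|^2_\varepsilon + C\varepsilon$.
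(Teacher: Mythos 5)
Your proof follows the same route as the paper: isolate $\varepsilon\partial_t n^\varepsilon_R$ (resp. $\varepsilon\partial_{tx} n^\varepsilon_R$) from the continuity equation, take $L^2$ norms, use the a priori bounds on $n,u$, and then apply Lemma \ref{L1} at levels $\alpha=0,1,2$ to trade derivatives of $n^\varepsilon_R$ for the $\phi^\varepsilon_R$-quantities appearing in $|\!|\!|\cdot|\!|\!|_\varepsilon$, handling the quadratic term $\varepsilon^3\partial_x u^\varepsilon_R\,\partial_x n^\varepsilon_R$ with the a priori bound $\tilde C$. (One cosmetic slip: you write that Lemma \ref{L1} ``only gives $\|n^\varepsilon_R\|$, not $\|\partial_x n^\varepsilon_R\|$,'' but in fact it applies at $\alpha=1$ and $\alpha=2$ as you then correctly use; the genuine limitation is at $\alpha\ge 3$, as noted in the paper's remark following Lemma \ref{L1}.)
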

\begin{proof}
From \eqref{rem2-1}, we have
\begin{equation*}
\begin{split}
\varepsilon\partial_tn^{\varepsilon}_R=(1-u)\partial_xn^{\varepsilon}_R -n\partial_xu^{\varepsilon}_R -\varepsilon\partial_x\tilde un^{\varepsilon}_R -\varepsilon\partial_x\tilde nu^{\varepsilon}_R -\varepsilon^2\mathcal R_1.
\end{split}
\end{equation*}
Since $1/2<n<3/2$ and $|u|<1/2$, taking $L^2$-norm yields
\begin{equation*}
\begin{split}
\|\varepsilon\partial_tn^{\varepsilon}_R\|^2\leq&\|(1-u)\partial_xn^{\varepsilon}_R\|^2 +\|n\partial_xu^{\varepsilon}_R\|^2 +\varepsilon^2\|\partial_x\tilde un^{\varepsilon}_R\|^2+ \varepsilon^2\|\partial_x\tilde nu^{\varepsilon}_R\|^2 +\varepsilon^4\|\mathcal R_1\|^2\\
\leq & C(\|\partial_xn^{\varepsilon}_R\|^2+\|\partial_xu^{\varepsilon}_R\|^2) +C\varepsilon^2(\varepsilon^2+\|n^{\varepsilon}_R\|^2+\|u^{\varepsilon}_R\|^2).
\end{split}
\end{equation*}
Applying \eqref{equ31} with $\alpha=1$, we deduce \eqref{equ32}.

To prove \eqref{equ33}, we take $\partial_x$ of \eqref{rem2-1} to obtain
\begin{equation*}
\begin{split}
\|\varepsilon\partial_{tx}n^{\varepsilon}_R\|^2\leq C(\|u^{\varepsilon}_R\|_{H^2}^2+\|n^{\varepsilon}_R\|_{H^2}^2) +C\varepsilon^6\int|\partial_xu^{\varepsilon}_R|^2|\partial_xn^{\varepsilon}_R|^2 +C\varepsilon^4.
\end{split}
\end{equation*}
We note that
\begin{equation*}
\begin{split}
C\varepsilon^6\|\partial_xu^{\varepsilon}_R\|_{L^{\infty}}^2\|\partial_xn^{\varepsilon}_R\|^2\leq C\varepsilon^6\|u^{\varepsilon}_R\|_{H^2}^2\|n^{\varepsilon}_R\|_{H^1}^2\leq C(\varepsilon\tilde C)\|u^{\varepsilon}_R\|_{H^2}^2.
\end{split}
\end{equation*}
The Lemma then follows form Lemma \ref{L1}.
\end{proof}

\begin{lemma}\label{L3}
Let $(n^{\varepsilon}_R,u^{\varepsilon}_R,\phi^{\varepsilon}_R)$ be a solution to \eqref{rem2} and $\alpha\geq 0$ be an integer. There exist some constant $C_1=C(\varepsilon\tilde C)$ and $\varepsilon_1>0$ such that for any $0<\varepsilon<\varepsilon_1$,
\begin{equation*}
\begin{split}
\varepsilon\|\partial_t\partial_x^{\alpha+1}\phi^{\varepsilon}_R\|^2 +\|\partial_t\partial_x^{\alpha}\phi^{\varepsilon}_R\|^2 \leq  2\|\partial_t\partial_x^{\alpha}n^{\varepsilon}_R\|^2+C_1.
\end{split}
\end{equation*}
\end{lemma}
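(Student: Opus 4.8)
The plan is to repeat the energy argument of Lemma \ref{L1}, but one time–derivative higher. Differentiating the Poisson equation \eqref{rem2-3} in $t$ gives
\[
\varepsilon\partial_x^2\partial_t\phi^{\varepsilon}_R=\partial_t\phi^{\varepsilon}_R+\varepsilon\partial_t(\phi^{(1)}\phi^{\varepsilon}_R)-\partial_tn^{\varepsilon}_R+\varepsilon^2\partial_t{\mathcal R}_3 .
\]
The structural point that makes this work is that $\mathcal R_3$ (see \eqref{rrr-4} and the Appendix) is a smooth function of $\phi^{\varepsilon}_R$ and of the known profiles $\phi^{(i)}$ that involves \emph{no} $x$–derivatives of $\phi^{\varepsilon}_R$; hence $\partial_t\mathcal R_3=\sum_i(\partial_{\phi^{(i)}}\mathcal R_3)\partial_t\phi^{(i)}+(\partial_{\phi^{\varepsilon}_R}\mathcal R_3)\partial_t\phi^{\varepsilon}_R$ again contains no $x$–derivatives of $\phi^{\varepsilon}_R$, and by \eqref{equ17} all its partials are bounded by $C_1(\varepsilon\tilde C)$.

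Next I would apply $\partial_x^{\alpha}$ to this identity, take the $L^2$ inner product with $\partial_t\partial_x^{\alpha}\phi^{\varepsilon}_R$, and integrate by parts in the $\varepsilon\partial_x^2$ term, producing $\varepsilon\|\partial_t\partial_x^{\alpha+1}\phi^{\varepsilon}_R\|^2+\|\partial_t\partial_x^{\alpha}\phi^{\varepsilon}_R\|^2$ on the left, up to the contribution of $\varepsilon\partial_x^{\alpha}\partial_t(\phi^{(1)}\phi^{\varepsilon}_R)$, whose leading piece $\varepsilon\phi^{(1)}\partial_t\partial_x^{\alpha}\phi^{\varepsilon}_R$ is absorbed into the left for $0<\varepsilon<\varepsilon_1$ since $\|\phi^{(1)}\|_{L^{\infty}}$ is finite. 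On the right, the term coming from $-\partial_tn^{\varepsilon}_R$ is handled by Young's inequality, $\int\partial_t\partial_x^{\alpha}n^{\varepsilon}_R\,\partial_t\partial_x^{\alpha}\phi^{\varepsilon}_R\le \tfrac18\|\partial_t\partial_x^{\alpha}\phi^{\varepsilon}_R\|^2+2\|\partial_t\partial_x^{\alpha}n^{\varepsilon}_R\|^2$, which is precisely the term $2\|\partial_t\partial_x^{\alpha}n^{\varepsilon}_R\|^2$ in the statement. Every remaining term carries an explicit factor $\varepsilon$ or $\varepsilon^2$; using Moser-type product estimates together with \eqref{equ17}, the $L^{\infty}$ and Sobolev bounds on the profiles $\phi^{(i)},\partial_t\phi^{(i)}$, and the running a priori bound $\|n^{\varepsilon}_R\|_{H^2},\,|\!|\!|(u^{\varepsilon}_R,\phi^{\varepsilon}_R)|\!|\!|_{\varepsilon}\le\tilde C$ (so in particular $\varepsilon\|\partial_x^{3}\phi^{\varepsilon}_R\|^2,\varepsilon^2\|\partial_x^{4}\phi^{\varepsilon}_R\|^2\le\tilde C$ at the orders that occur), they are bounded by $\tfrac18\|\partial_t\partial_x^{\alpha}\phi^{\varepsilon}_R\|^2+C_1(\varepsilon\tilde C)$ once $\varepsilon$ is small. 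Absorbing the $\tfrac18$-terms into the left yields the asserted inequality; the general-$\alpha$ bookkeeping is exactly the commutation of $\partial_x^{\alpha}$ through the equation, as in the last part of the proof of Lemma \ref{L1}.

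The main obstacle is the appearance of $\partial_t\phi^{\varepsilon}_R$ inside $\partial_t\mathcal R_3$ — the very quantity being estimated — which would make the argument circular were it not multiplied by $\varepsilon^2$: the key observation is that $\varepsilon^2\!\int(\partial_{\phi^{\varepsilon}_R}\mathcal R_3)\,\partial_t\partial_x^{\alpha}\phi^{\varepsilon}_R\cdot\partial_t\partial_x^{\alpha}\phi^{\varepsilon}_R\le C_1(\varepsilon\tilde C)\,\varepsilon^2\|\partial_t\partial_x^{\alpha}\phi^{\varepsilon}_R\|^2$ is absorbable for $\varepsilon<\varepsilon_1$, and similarly for the $\varepsilon\phi^{(1)}$ term. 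The only other delicate point is the lower-order Leibniz terms in $\partial_x^{\alpha}(\varepsilon\partial_t(\phi^{(1)}\phi^{\varepsilon}_R))$ and $\partial_x^{\alpha}(\varepsilon^2\partial_t\mathcal R_3)$, which involve $\partial_x^{j}\partial_t\phi^{\varepsilon}_R$ and $\partial_x^{j}\phi^{\varepsilon}_R$ with $j<\alpha$; these I would control by induction on $\alpha$, invoking the lower cases of the present lemma together with Lemma \ref{L2} — and it is exactly the surplus power of $\varepsilon$ in these terms that cancels the factor $\varepsilon^{-1}$ coming from the bound on $\|\varepsilon\partial_t n^{\varepsilon}_R\|_{H^1}$ in Lemma \ref{L2}, so that the whole contribution stays inside $C_1(\varepsilon\tilde C)$.
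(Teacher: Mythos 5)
Your proposal is correct and follows the same route as the paper: apply $\partial_t\partial_x^{\alpha}$ to the Poisson equation \eqref{rem2-3}, pair with $\partial_t\partial_x^{\alpha}\phi^{\varepsilon}_R$, integrate by parts to produce the left-hand side, and then absorb the $O(\varepsilon)$ contributions of the $\varepsilon\phi^{(1)}\phi^{\varepsilon}_R$ and $\varepsilon^2\mathcal R_3$ terms (controlled via the bound \eqref{equ34} of Lemma \ref{L8}) into the left for $\varepsilon$ small, with Young's inequality supplying the factor $2$ in front of $\|\partial_t\partial_x^{\alpha}n^{\varepsilon}_R\|^2$. Your explicit identification of the potential circularity through $\partial_t\mathcal R_3$, and its resolution via the $\varepsilon^2$ prefactor, is exactly what makes the absorption legitimate, even though the paper leaves that implicit in its appeal to \eqref{equ34}.
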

\begin{proof}
The proof is similar to that of Lemma \ref{L1}. When $\alpha=0$, by first taking $\partial_t$ of \eqref{rem2-3} and then taking inner product with $\partial_t\phi^{\varepsilon}_R$, we have
\begin{equation*}
\begin{split}
\varepsilon\|\partial_{tx}\phi^{\varepsilon}_R\|^2  +\|\partial_{t}\phi^{\varepsilon}_R\|^2 =&\int\partial_tn^{\varepsilon}_R\partial_t\phi^{\varepsilon}_R -\int(\varepsilon\partial_t(\phi^{(1)}\phi^{\varepsilon}_R) +\varepsilon^2\partial_t\mathcal R_3)\partial_t\phi^{\varepsilon}_R\\
\leq & \frac14\|\partial_{t}\phi^{\varepsilon}_R\|^2+\|\partial_{t}n^{\varepsilon}_R\|^2 +C({\varepsilon}\tilde C)\varepsilon(\|\phi^{\varepsilon}_R\|^2+\|\partial_t\phi^{\varepsilon}_R\|^2),
\end{split}
\end{equation*}
thanks to \eqref{equ34} in Lemma \ref{L8} in Appendix. Therefore, there exists some $\varepsilon_1>0$ such that when $\varepsilon<\varepsilon_1$,
\begin{equation*}
\begin{split}
\varepsilon\|\partial_{tx}\phi^{\varepsilon}_R\|^2  +\|\partial_{t}\phi^{\varepsilon}_R\|^2 \leq & 2\|\partial_{t}n^{\varepsilon}_R\|^2 +C({\varepsilon}\tilde C).
\end{split}
\end{equation*}

When $\alpha=1$, we take $\partial_{tx}$ of \eqref{rem2-3} and then take inner product with $\partial_{tx}\phi^{\varepsilon}_R$ to obtain
\begin{equation*}
\begin{split}
\varepsilon\|\partial_t\partial_x^{2}\phi^{\varepsilon}_R\|^2 +\|\partial_{tx}\phi^{\varepsilon}_R\|^2 \leq  2\|\partial_{tx}n^{\varepsilon}_R\|^2 +C({\varepsilon}\tilde C).
\end{split}
\end{equation*}
The case of $\alpha\geq 2$ can be proved similarly.
\end{proof}

The rest of this section is devoted to the proof of Theorem \ref{thm1} for the case $T_i=0$, which is divided into the following several subsections.

\subsection{Zeroth, first and second order estimates}
\begin{proposition}\label{L4}
Let $(n^{\varepsilon}_R,u^{\varepsilon}_R,\phi^{\varepsilon}_R)$ be a solution to \eqref{rem2} and $\gamma=0,1,2$, then
\begin{equation}\label{equ58}
\begin{split}
\frac12\frac{d}{dt}&\|\partial_x^{\gamma}u^{\varepsilon}_R\|^2 +\frac12\frac{d}{dt}[\int\frac{1+\varepsilon\phi^{(1)}}{n} |\partial_x^{\gamma}\phi^{\varepsilon}_R|^2 +\int\frac{\varepsilon}{n}|\partial_x^{\gamma+1}\phi^{\varepsilon}_R|^2] \\
&\leq C_1(1+\varepsilon^2|\!|\!|(u^{\varepsilon}_R,\phi^{\varepsilon}_R) |\!|\!|^2_{\varepsilon}) (1+|\!|\!|(u^{\varepsilon}_R,\phi^{\varepsilon}_R)|\!|\!|^2_{\varepsilon}).
\end{split}
\end{equation}
\end{proposition}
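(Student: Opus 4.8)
Here is a plan for proving Proposition \ref{L4}.

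The plan is to prove \eqref{equ58} by a weighted $L^2$ energy estimate on the $(u^{\varepsilon}_R,\phi^{\varepsilon}_R)$-part of \eqref{rem2}, arranged so that the two singular mechanisms in \eqref{rem2-2} --- the self-transport $-\frac{1-u}{\varepsilon}\partial_x$ and the coupling $-\frac1\varepsilon\partial_x\phi^{\varepsilon}_R$ --- become either perfect $x$-derivatives with $O(1)$ coefficients or the time derivative of the weighted potential energy $\frac12\frac{d}{dt}[\int\frac{1+\varepsilon\phi^{(1)}}{n}|\partial_x^\gamma\phi^{\varepsilon}_R|^2+\int\frac\varepsilon n|\partial_x^{\gamma+1}\phi^{\varepsilon}_R|^2]$ appearing in \eqref{equ58}, the latter being generated from the time-differentiated Poisson equation \eqref{rem2-3}.

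First I would apply $\partial_x^\gamma$ ($\gamma=0,1,2$) to \eqref{rem2-2} and pair it with $\partial_x^\gamma u^{\varepsilon}_R$ in $L^2$. The transport term yields, after integration by parts, $\frac1{2\varepsilon}\int\partial_x u\,|\partial_x^\gamma u^{\varepsilon}_R|^2$ with $\frac1\varepsilon\partial_x u$ bounded in $L^\infty$ uniformly in $\varepsilon$ (since $u=\varepsilon u^{(1)}+\cdots+\varepsilon^3u^{\varepsilon}_R$ and $|\!|\!|(u^{\varepsilon}_R,\phi^{\varepsilon}_R)|\!|\!|^2_{\varepsilon}\le\tilde C$), while the commutator with $\frac{1-u}\varepsilon$, the terms built from the fixed smooth profiles $\tilde u,\tilde n$, and the forcing $\varepsilon\mathcal R_2$ are harmless by Moser-type product estimates. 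The one dangerous term is $\mathcal T_\gamma=-\frac1\varepsilon\int\partial_x^{\gamma+1}\phi^{\varepsilon}_R\,\partial_x^\gamma u^{\varepsilon}_R=\frac1\varepsilon\int\partial_x^\gamma\phi^{\varepsilon}_R\,\partial_x^{\gamma+1}u^{\varepsilon}_R$. To treat it I would write $\frac1\varepsilon\partial_x^{\gamma+1}u^{\varepsilon}_R=\frac1n\partial_x^\gamma\big(\frac n\varepsilon\partial_x u^{\varepsilon}_R\big)-\frac1n\big[\partial_x^\gamma,\frac n\varepsilon\big]\partial_x u^{\varepsilon}_R$ (the commutator being $O(1)$ because $\frac n\varepsilon-\frac1\varepsilon=\tilde n+\varepsilon^2n^{\varepsilon}_R$ has bounded derivatives), use the continuity equation \eqref{rem2-1} to replace $\frac n\varepsilon\partial_x u^{\varepsilon}_R$ by $-\partial_t n^{\varepsilon}_R+\frac{1-u}\varepsilon\partial_x n^{\varepsilon}_R$ plus bounded terms, and finally substitute $\partial_t n^{\varepsilon}_R=(1+\varepsilon\phi^{(1)})\partial_t\phi^{\varepsilon}_R-\varepsilon\partial_t\partial_x^2\phi^{\varepsilon}_R+\varepsilon(\partial_t\phi^{(1)})\phi^{\varepsilon}_R+\varepsilon^2\partial_t\mathcal R_3$, obtained by differentiating \eqref{rem2-3} in time. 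Integrating by parts in $x$, the principal part of $\mathcal T_\gamma$ reassembles precisely as $-\frac12\frac{d}{dt}[\int\frac{1+\varepsilon\phi^{(1)}}{n}|\partial_x^\gamma\phi^{\varepsilon}_R|^2+\int\frac\varepsilon n|\partial_x^{\gamma+1}\phi^{\varepsilon}_R|^2]$, exactly minus the time derivative in \eqref{equ58}; the surviving singular-transport piece $\frac1\varepsilon\int\frac{1-u}n\partial_x^\gamma\phi^{\varepsilon}_R\,\partial_x^{\gamma+1}n^{\varepsilon}_R$ becomes manageable once $n^{\varepsilon}_R$ is replaced from \eqref{rem2-3} by $(1+\varepsilon\phi^{(1)})\phi^{\varepsilon}_R-\varepsilon\partial_x^2\phi^{\varepsilon}_R+\varepsilon^2\mathcal R_3$, since then $\frac{1+\varepsilon\phi^{(1)}}n-1=O(\varepsilon)$ neutralises the $\varepsilon^{-1}$ and the remaining $\frac1\varepsilon\int(1-u)\partial_x^\gamma\phi^{\varepsilon}_R\partial_x^{\gamma+1}\phi^{\varepsilon}_R$ is a perfect $x$-derivative.

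All remaining error terms should carry a spare power of $\varepsilon$: the time derivatives of the weights $\frac{1+\varepsilon\phi^{(1)}}n$ and $\frac\varepsilon n$ bring in $\partial_t n=\varepsilon\partial_t(\tilde n+\varepsilon^2n^{\varepsilon}_R)$, with $\|\varepsilon\partial_t n^{\varepsilon}_R\|_{H^1}$ controlled by Lemma \ref{L2}; the commutator $[\partial_x^\gamma,\varepsilon\phi^{(1)}]$ and the pieces $\varepsilon(\partial_t\phi^{(1)})\phi^{\varepsilon}_R$, $\varepsilon^2\partial_t\mathcal R_3$ are bounded using \eqref{equ17}; and every $\|\partial_x^j n^{\varepsilon}_R\|$ with $j\le2$ that appears is traded for $\phi^{\varepsilon}_R$-norms via Lemma \ref{L1}, so that everything is bounded by $C_1(1+\varepsilon^2|\!|\!|(u^{\varepsilon}_R,\phi^{\varepsilon}_R)|\!|\!|^2_{\varepsilon})(1+|\!|\!|(u^{\varepsilon}_R,\phi^{\varepsilon}_R)|\!|\!|^2_{\varepsilon})$. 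I expect the only genuinely delicate point to be the top-order residual of $\mathcal T_2$, the term $\mathcal B^{(2)}=\int\partial_x^3\phi^{\varepsilon}_R\,\partial_x\big(\frac\varepsilon n\big)\,\partial_t\partial_x^2\phi^{\varepsilon}_R$ flagged in Section 1, in which $\partial_t\partial_x^2\phi^{\varepsilon}_R$ cannot be bounded by $|\!|\!|(u^{\varepsilon}_R,\phi^{\varepsilon}_R)|\!|\!|_{\varepsilon}$ (by Lemma \ref{L3} this would require $\|\partial_t\partial_x^2n^{\varepsilon}_R\|$, which is beyond the $H^1$-bound of Lemma \ref{L2}). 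The way out is to integrate by parts once more in $x$, so that $\partial_t\partial_x^2\phi^{\varepsilon}_R$ is replaced by $\partial_t\partial_x\phi^{\varepsilon}_R$ --- which \emph{is} controlled, by $C_1\varepsilon^{-1}|\!|\!|(u^{\varepsilon}_R,\phi^{\varepsilon}_R)|\!|\!|_{\varepsilon}+C\varepsilon^{-1/2}$, through Lemma \ref{L2} and Lemma \ref{L3} --- the displaced derivative falling either on the weight $\frac\varepsilon n$, which is harmless since $\partial_x^k\big(\frac\varepsilon n\big)=O(\varepsilon^2)$ carries two spare powers of $\varepsilon$, or on $\phi^{\varepsilon}_R$, where $\varepsilon\|\partial_x^4\phi^{\varepsilon}_R\|\le|\!|\!|(u^{\varepsilon}_R,\phi^{\varepsilon}_R)|\!|\!|_{\varepsilon}$ and $\sqrt\varepsilon\|\partial_x^3\phi^{\varepsilon}_R\|\le|\!|\!|(u^{\varepsilon}_R,\phi^{\varepsilon}_R)|\!|\!|_{\varepsilon}$; the powers of $\varepsilon$ then balance, $\varepsilon^2\cdot\varepsilon^{-1}\cdot\varepsilon^{-1}=1$, and $|\mathcal B^{(2)}|\le C_1(1+|\!|\!|(u^{\varepsilon}_R,\phi^{\varepsilon}_R)|\!|\!|^2_{\varepsilon})$. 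Collecting all estimates over $\gamma=0,1,2$, and using that $C_1$ is nondecreasing in $\varepsilon\tilde C$, yields \eqref{equ58}. (This reduction is no longer available one order higher: in the $\sqrt\varepsilon$-weighted third-order estimate the analogous integration by parts cannot bring the factor below $\partial_t\partial_x^2\phi^{\varepsilon}_R$, which is uncontrolled --- this is the origin of $\mathcal B^{(3\times\varepsilon)}$ and of the corrective combination $\partial_t\partial_x^2\phi^{\varepsilon}_R-\varepsilon\partial_t\partial_x^4\phi^{\varepsilon}_R$ of Section 1.)
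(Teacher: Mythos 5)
Your plan tracks the paper's proof: the weighted $L^2$ estimate on $\partial_x^\gamma$ of \eqref{rem2-2}, the use of $\partial_x^\gamma$ of \eqref{rem2-1} to rewrite the singular coupling $\frac1\varepsilon\int\partial_x^\gamma\phi^{\varepsilon}_R\,\partial_x^{\gamma+1}u^{\varepsilon}_R$, and the replacement of $\partial_x^{\gamma+1}n^{\varepsilon}_R$ and $\partial_t\partial_x^\gamma n^{\varepsilon}_R$ via \eqref{rem2-3} and its time derivative to generate the weighted potential energy $\int\frac{1+\varepsilon\phi^{(1)}}{n}|\partial_x^\gamma\phi^{\varepsilon}_R|^2+\int\frac\varepsilon n|\partial_x^{\gamma+1}\phi^{\varepsilon}_R|^2$ --- these are exactly the steps of the paper (its $I^{(\gamma)}_1$ and $I^{(\gamma)}_2$ in Lemmas \ref{Lem-u1} and \ref{Lem-u2}).

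The one place you diverge, and where your diagnosis is slightly off, is $\mathcal B^{(2)}=\int\partial_x^3\phi^{\varepsilon}_R\,\partial_x(\frac\varepsilon n)\,\partial_t\partial_x^2\phi^{\varepsilon}_R$. The paper does \emph{not} integrate by parts again. Since $\partial_x(\frac\varepsilon n)=-\varepsilon^2\frac{\partial_x\tilde n+\varepsilon^2\partial_x n^{\varepsilon}_R}{n^2}$, it rewrites $\mathcal B^{(\gamma)}$ as $-\int\varepsilon(\cdots)\partial_x^{\gamma+1}\phi^{\varepsilon}_R\,(\varepsilon\partial_t\partial_x^\gamma\phi^{\varepsilon}_R)$ and applies Young, so the quantity that must be controlled is $\varepsilon\|\varepsilon\partial_t\partial_x^2\phi^{\varepsilon}_R\|^2=\varepsilon^2\cdot\big(\varepsilon\|\partial_t\partial_x^2\phi^{\varepsilon}_R\|^2\big)\le 2\|\varepsilon\partial_{tx}n^{\varepsilon}_R\|^2+C_1\varepsilon^2$ by Lemma \ref{L3} with $\alpha=1$, and the latter is admissible by Lemma \ref{L2} (see \eqref{equ38}). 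Thus it is not that $\|\partial_t\partial_x^2\phi^{\varepsilon}_R\|$ need be bounded by $|\!|\!|\cdot|\!|\!|_\varepsilon$ --- the $\varepsilon^2$ in $\partial_x(\frac\varepsilon n)$ already supplies the $\varepsilon$-weight that Lemma \ref{L3} requires. Your alternative, integrating by parts once more so as to land on $\partial_{tx}\phi^{\varepsilon}_R$, does also close (the $\varepsilon$ budget $\varepsilon^2\cdot\varepsilon^{-1}\cdot\varepsilon^{-1}=1$ that you record is correct), and both methods break down at the third order for the reason you give, which is why Corollary \ref{Cor} deliberately keeps $\mathcal B^{(2)}$ explicit for cancellation against $\mathcal B^{(3\times\varepsilon)}$. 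So your proof is correct, just marginally longer at this one spot than the paper's.
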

\begin{proof}
We take $\partial_x^{\gamma}$ of \eqref{rem2-2} and then take inner product of $\partial_x^{\gamma}u^{\varepsilon}_R$. Integrating by parts, we obtain
\begin{equation}\label{u1}
\begin{split}
\frac12\frac{d}{dt}\|\partial_{x}^{\gamma}u^{\varepsilon}_R\|^2 & -\frac{1}{\varepsilon}\int\partial_{x}^{\gamma+1}u^{\varepsilon}_R \partial_{x}^{\gamma}u^{\varepsilon}_R +\int\partial_{x}^{\gamma}\left[(\tilde u+\varepsilon^2u^{\varepsilon}_R)\partial_{x}u^{\varepsilon}_R\right] \partial_{x}^{\gamma}u^{\varepsilon}_R\\
&+\int\partial_{x}^{\gamma}\left[\partial_x\tilde uu^{\varepsilon}_R\right]\partial_{x}^{\gamma}u^{\varepsilon}_R +\int\partial_{x}^{\gamma}\left[\varepsilon\mathcal R_2\right]\partial_{x}^{\gamma}u^{\varepsilon}_R\\ =&\int\partial_x^{\gamma}\phi^{\varepsilon}_R \frac{\partial_{x}^{\gamma+1}u^{\varepsilon}_R}{\varepsilon}=:I^{(\gamma)}.
\end{split}
\end{equation}

\emph{Estimate of the LHS of \eqref{u1}.} The second term on the LHS of \eqref{u1} vanishes by integration by parts. The third term on the LHS of \eqref{u1} consists of two parts. For the first part, for $0\leq\gamma\leq 2$, we have
\begin{equation*}
\begin{split}
\int\partial_{x}^{\gamma}(\tilde u\partial_{x}u^{\varepsilon}_R) \partial_{x}^{\gamma}u^{\varepsilon}_R=&\int\tilde u\partial_{x}^{\gamma+1}u^{\varepsilon}_R\partial_{x}^{\gamma}u^{\varepsilon}_R +\sum_{0\leq\beta\leq \gamma-1}C_{\gamma}^{\beta}\int \partial_{x}^{\gamma-\beta}\tilde u\partial_{x}^{\beta+1}u^{\varepsilon}_R\partial_{x}^{\gamma}u^{\varepsilon}_R\\
=&-\frac12\int \partial_x\tilde u\partial_{x}^{\gamma}u^{\varepsilon}_R\partial_{x}^{\gamma}u^{\varepsilon}_R +\sum_{0\leq\beta\leq \gamma-1}C_{\gamma}^{\beta}\int \partial_{x}^{\gamma-\beta}\tilde u\partial_{x}^{\beta+1}u^{\varepsilon}_R\partial_{x}^{\gamma}u^{\varepsilon}_R\\
\leq & C\|u^{\varepsilon}_R\|^2_{H^2},
\end{split}
\end{equation*}
where, when $\gamma=0$, there is no such ``summation" term. For the second part, after integration by parts, we have for $0\leq \gamma\leq 2$
\begin{equation*}
\begin{split}
\varepsilon^2\int&\partial_{x}^{\gamma}(u^{\varepsilon}_R\partial_{x}u^{\varepsilon}_R) \partial_{x}^{\gamma}u^{\varepsilon}_R\\
=&-\frac{\varepsilon^2}{2}\int \partial_xu^{\varepsilon}_R\partial_{x}^{\gamma}u^{\varepsilon}_R \partial_{x}^{\gamma}u^{\varepsilon}_R +\sum_{0\leq\beta\leq \gamma-1}C_{\gamma}^{\beta}\varepsilon^2\int \partial_{x}^{\gamma-\beta}u^{\varepsilon}_R \partial_{x}^{\beta+1}u^{\varepsilon}_R\partial_{x}^{\gamma}u^{\varepsilon}_R\\
\leq & C\varepsilon^2\|\partial_xu^{\varepsilon}_R\|_{L^{\infty}} \|u^{\varepsilon}_R\|_{H^{\gamma}}^2\\
\leq & C(\varepsilon^2|\!|\!|(u^{\varepsilon}_R,\phi^{\varepsilon}_R) |\!|\!|_{\varepsilon})\|u^{\varepsilon}_R\|_{H^{\gamma}}^2,
\end{split}
\end{equation*}
where $|\!|\!|(u^{\varepsilon}_R,\phi^{\varepsilon}_R) |\!|\!|_{\varepsilon}$ is given in \eqref{def-A}. For the last two terms on the LHS of \eqref{u1}, since $\partial_x^{\gamma}\mathcal R_2$ is integrable by \eqref{rrr-3} and Theorem \ref{thm3}, they can be similarly bounded by $\|u^{\varepsilon}_R\|_{H^{\gamma}}^2+C\varepsilon^2.$
In summary, the last four terms on the LHS of \eqref{u1} are bounded by
\begin{equation}\label{left}
\begin{split}
C(1+\varepsilon^2|\!|\!|(u^{\varepsilon}_R,\phi^{\varepsilon}_R) |\!|\!|_{\varepsilon})(1+\|u^{\varepsilon}_R\|_{H^{\gamma}}^2).
\end{split}
\end{equation}

\emph{Estimate of the RHS term $I^{(\gamma)}$ in \eqref{u1}.} Taking $\partial_x^{\gamma}$ of \eqref{rem2-1}, we have
\begin{align}\label{equ59}
\frac{\partial_{x}^{\gamma+1}u^{\varepsilon}_R}{\varepsilon} =&\frac1n\bigg[\frac{(1-u)}{\varepsilon}\partial_{x}^{\gamma+1}n^{\varepsilon}_R -\partial_{t}\partial_x^{\gamma}n^{\varepsilon}_R -\sum_{0\leq\beta\leq\gamma-1}C_{\gamma}^{\beta}\partial_x^{\gamma-\beta}(\tilde n+\varepsilon^2 n^{\varepsilon}_R)\partial_x^{\beta+1}u^{\varepsilon}_R\nonumber\\ &-\sum_{0\leq\beta\leq\gamma-1}C_{\gamma}^{\beta}\partial_x^{\gamma-\beta}(\tilde u+\varepsilon^2 u^{\varepsilon}_R)\partial_x^{\beta+1}n^{\varepsilon}_R -\sum_{0\leq\beta\leq\gamma}C_{\gamma}^{\beta}\partial_x^{\beta}u^{\varepsilon}_R \partial_x^{\gamma-\beta+1}\tilde n\\ &-\sum_{0\leq\beta\leq\gamma}C_{\gamma}^{\beta}\partial_x^{\beta}n^{\varepsilon}_R \partial_x^{\gamma-\beta+1}\tilde u -\varepsilon\partial_x^{\gamma}\mathcal R_1 \bigg]=:\sum_{i=1}^7A_i^{(\gamma)}.\nonumber
\end{align}
Accordingly, $I^{(\gamma)}$ is decomposed into
\begin{equation}\label{equ60}
\begin{split}
I^{(\gamma)}=\sum_{i=1}^7I^{(\gamma)}_{i}=\sum_{i=1}^7\int\partial_x^2 \phi^{\varepsilon}_R A_i^{(\gamma)}.
\end{split}
\end{equation}
We first estimate the terms $I^{(\gamma)}_{i}$ for $3\leq i\leq 7$ and leave $I^{(\gamma)}_{1}$ and $I^{(\gamma)}_{2}$ in the following two lemmas.

\emph{Estimate of $I^{(\gamma)}_{3}$ in \eqref{equ60}.} We divide it into two parts
\begin{equation*}
\begin{split}
I^{(\gamma)}_{3}=&\sum_{0\leq\beta\leq\gamma-1}C_{\gamma}^{\beta} \int\partial_x^2\phi^{\varepsilon}_R\partial_x^{\gamma-\beta}\tilde n\partial_x^{\beta+1}u^{\varepsilon}_R +\sum_{0\leq\beta\leq\gamma-1}C_{\gamma}^{\beta}\varepsilon^2 \int\partial_x^2\phi^{\varepsilon}_R\partial_x^{\gamma-\beta}n^{\varepsilon}_R \partial_x^{\beta+1}u^{\varepsilon}_R\\
=&:I^{(\gamma)}_{31}+I^{(\gamma)}_{32}.
\end{split}
\end{equation*}
The first one is easily bounded by
\begin{equation*}
\begin{split}
I^{(\gamma)}_{31}\leq C(\|u^{\varepsilon}_R\|_{H^2}^2+\|\phi^{\varepsilon}_R\|_{H^2}^2).
\end{split}
\end{equation*}
For the second term $I_{32}^{\gamma}$, since the order of the derivative on $n^{\varepsilon}_R$ does not exceed 2, using H\"older inequality, Sobolev embedding $H^1\hookrightarrow L^{\infty}$ and then Lemma \ref{L1}, we deduce
\begin{equation*}
\begin{split}
I^{(\gamma)}_{32}\leq & C\varepsilon^2\|\partial_x^2\phi^{\varepsilon}_R\|_{L^{\infty}} \|n^{\varepsilon}_R\|_{H^2} \|u^{\varepsilon}_R\|_{H^2} \\
\leq & C_1(1+\varepsilon^2|\!|\!|(u^{\varepsilon}_R,\phi^{\varepsilon}_R)|\!|\!|_{\varepsilon}^2) (\|u^{\varepsilon}_R\|_{H^2}^2+\varepsilon\|\phi^{\varepsilon}_R\|_{H^3}^2),
\end{split}
\end{equation*}
where $|\!|\!|(u^{\varepsilon}_R,\phi^{\varepsilon}_R) |\!|\!|_{\varepsilon}$ is given in \eqref{def-A}. Hence
\begin{equation*}
\begin{split}
I^{(\gamma)}_{3} \leq C_1(1+\varepsilon^2|\!|\!|(u^{\varepsilon}_R,\phi^{\varepsilon}_R)|\!|\!|_{\varepsilon}^2) |\!|\!|(u^{\varepsilon}_R,\phi^{\varepsilon}_R)|\!|\!|_{\varepsilon}^2.
\end{split}
\end{equation*}

\emph{Estimate of $I^{(\gamma)}_{4}$ in \eqref{equ60}.} The term $I^{(\gamma)}_{4}$ is bounded similarly
\begin{equation*}
\begin{split}
I^{(\gamma)}_{4} \leq C_1(1+\varepsilon^2|\!|\!|(u^{\varepsilon}_R,\phi^{\varepsilon}_R)|\!|\!|_{\varepsilon}^2) |\!|\!|(u^{\varepsilon}_R,\phi^{\varepsilon}_R)|\!|\!|_{\varepsilon}^2.
\end{split}
\end{equation*}

\emph{Estimate of $I^{(\gamma)}_{5},I^{(\gamma)}_{6},I^{(\gamma)}_{7}$ in \eqref{equ60}.} Since the terms $I^{(\gamma)}_{i}$ for $i=5,6,7$ are bilinear or linear in the unknowns, they can be bounded by
\begin{equation*}
\begin{split}
I^{(\gamma)}_{5,6,7} \leq C_1(1+|\!|\!|(u^{\varepsilon}_R,\phi^{\varepsilon}_R)|\!|\!|_{\varepsilon}^2).
\end{split}
\end{equation*}

In summary, we have
\begin{equation*}
\begin{split}
\sum_{i=3}^7I^{(\gamma)}_{i} \leq C_1(1+\varepsilon^2|\!|\!|(u^{\varepsilon}_R,\phi^{\varepsilon}_R)|\!|\!|_{\varepsilon}^2) (1+|\!|\!|(u^{\varepsilon}_R,\phi^{\varepsilon}_R)|\!|\!|_{\varepsilon}^2).
\end{split}
\end{equation*}

We deduce Proposition \ref{L4} by the following Lemma \ref{Lem-u1} and \ref{Lem-u2}.
\end{proof}

\begin{lemma}[\textbf{\emph{Estimate of $I^{(\gamma)}_{1}$}}]\label{Lem-u1}
Let $(n^{\varepsilon}_R,u^{\varepsilon}_R,\phi^{\varepsilon}_R)$ be a solution to \eqref{rem2}, we have
\begin{equation*}
\begin{split}
I^{(\gamma)}_{1}\leq & C_1(1+\varepsilon^2|\!|\!|(u^{\varepsilon}_R,\phi^{\varepsilon}_R)|\!|\!|_{\varepsilon}^2) (1+|\!|\!|(u^{\varepsilon}_R,\phi^{\varepsilon}_R)|\!|\!|_{\varepsilon}^2),
\end{split}
\end{equation*}
where $I^{(\gamma)}_{1}$ is defined in \eqref{equ60} and $|\!|\!|(u^{\varepsilon}_R,\phi^{\varepsilon}_R)|\!|\!|_{\varepsilon}$ is given in \eqref{def-A}.
\end{lemma}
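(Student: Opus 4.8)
\emph{Proof plan.} Recall from \eqref{equ59}--\eqref{equ60} that $I^{(\gamma)}_1=\int\partial_x^{\gamma}\phi^{\varepsilon}_R\,A_1^{(\gamma)}$ with $A_1^{(\gamma)}=\frac{1-u}{\varepsilon n}\,\partial_x^{\gamma+1}n^{\varepsilon}_R$, and that it suffices to treat $\gamma\in\{0,1,2\}$, the case $\gamma=2$ being the critical one. As written, $A_1^{(\gamma)}$ carries two apparent pathologies: the prefactor $1/\varepsilon$, and one derivative too many on $n^{\varepsilon}_R$ (for $\gamma=2$ this is $\partial_x^{3}n^{\varepsilon}_R$, beyond the reach of Lemma \ref{L1}, which only controls $\|n^{\varepsilon}_R\|_{H^2}$). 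The plan is to eliminate $n^{\varepsilon}_R$ via the Poisson equation \eqref{rem2-3}, in the form $n^{\varepsilon}_R=(1+\varepsilon\phi^{(1)})\phi^{\varepsilon}_R-\varepsilon\partial_x^{2}\phi^{\varepsilon}_R+\varepsilon^{2}\mathcal R_3$, to differentiate it $\gamma+1$ times, and to split correspondingly $I^{(\gamma)}_1=J_1-J_2+J_3$, where $J_1=\int\partial_x^{\gamma}\phi^{\varepsilon}_R\,\frac{1-u}{\varepsilon n}\,\partial_x^{\gamma+1}\big[(1+\varepsilon\phi^{(1)})\phi^{\varepsilon}_R\big]$, $J_2=\int\partial_x^{\gamma}\phi^{\varepsilon}_R\,\frac{1-u}{n}\,\partial_x^{\gamma+3}\phi^{\varepsilon}_R$ and $J_3=\varepsilon\int\partial_x^{\gamma}\phi^{\varepsilon}_R\,\frac{1-u}{n}\,\partial_x^{\gamma+1}\mathcal R_3$. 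For brevity I write $\mathcal E_\varepsilon=|\!|\!|(u^{\varepsilon}_R,\phi^{\varepsilon}_R)|\!|\!|_{\varepsilon}$.

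For $J_1$ I would isolate the summand in which all $\gamma+1$ derivatives land on $\phi^{\varepsilon}_R$ and integrate by parts once: it equals $-\frac12\int\partial_x\!\big(\frac{(1-u)(1+\varepsilon\phi^{(1)})}{\varepsilon n}\big)\,|\partial_x^{\gamma}\phi^{\varepsilon}_R|^{2}$. The decisive structural observation is that $1-u$, $n$ and $1+\varepsilon\phi^{(1)}$ are each of the form $1+\varepsilon(\text{smooth bounded profile})+\varepsilon^{3}(\text{remainder})$, so that $\frac{(1-u)(1+\varepsilon\phi^{(1)})}{n}$ equals $1$ up to an $O(\varepsilon)$ term whose $x$-derivative is again $O(\varepsilon)$; hence $\frac1\varepsilon\partial_x\big(\frac{(1-u)(1+\varepsilon\phi^{(1)})}{n}\big)$ is bounded in $L^{\infty}$ by $C(1+\varepsilon^{2}\mathcal E_\varepsilon)$, where Lemma \ref{L1} is invoked to estimate $\varepsilon^{2}\|\partial_x n^{\varepsilon}_R\|_{L^{\infty}}\le\varepsilon^{2}\|n^{\varepsilon}_R\|_{H^2}\le C_1\varepsilon^{2}\mathcal E_\varepsilon$. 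This bounds the isolated summand by $C_1(1+\varepsilon^{2}\mathcal E_\varepsilon^{2})(1+\mathcal E_\varepsilon^{2})$. The remaining Leibniz summands of $J_1$ each carry a spare factor $\varepsilon$ which cancels the $1/\varepsilon$ and involve only derivatives of $\phi^{\varepsilon}_R$ of order $\le\gamma$, so, being bilinear in $\phi^{\varepsilon}_R$ with bounded profile coefficients, they are $\le C_1(1+\mathcal E_\varepsilon^{2})$.

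For $J_2$ the loss of derivatives is only apparent: integrating by parts (once for the cross term, twice for the leading one) transfers derivatives onto $\frac{1-u}{n}$, and since $\frac{1-u}{n}=1+O(\varepsilon)$ each transfer produces a factor $\partial_x\big(\frac{1-u}{n}\big)=O(\varepsilon)$; the top-order derivative then sits at $\partial_x^{\gamma+2}\phi^{\varepsilon}_R$. For $\gamma=2$ this is $\partial_x^{4}\phi^{\varepsilon}_R$, which is admissible precisely because $\mathcal E_\varepsilon$ in \eqref{def-A} contains the weighted slot $\varepsilon^{2}\|\partial_x^{4}\phi^{\varepsilon}_R\|^{2}$ (so $\varepsilon\|\partial_x^{4}\phi^{\varepsilon}_R\|\le\mathcal E_\varepsilon$) and $\varepsilon\|\partial_x^{3}\phi^{\varepsilon}_R\|^{2}\le\mathcal E_\varepsilon^{2}$; together with the gained powers of $\varepsilon$ this yields $|J_2|\le C_1(1+\varepsilon^{2}\mathcal E_\varepsilon^{2})(1+\mathcal E_\varepsilon^{2})$. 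For $J_3$ I would use that $\mathcal R_3$ involves no derivative of $\phi^{\varepsilon}_R$ (cf. \eqref{rrr-4} and the Appendix), so that by Lemma \ref{L8} and the algebra property of $H^{s}(\Bbb R)$ one has $\|\partial_x^{\gamma+1}\mathcal R_3\|\le C_1(1+\|\phi^{\varepsilon}_R\|_{H^{\gamma+1}})$; the explicit $\varepsilon$ in front of $J_3$ together with $\varepsilon\|\phi^{\varepsilon}_R\|_{H^{3}}^{2}\le C\mathcal E_\varepsilon^{2}$ then gives $|J_3|\le C_1(1+\mathcal E_\varepsilon^{2})$. Adding the three bounds proves the Lemma.

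The main obstacle is exactly the simultaneous appearance in $A_1^{(\gamma)}$ of the singular prefactor $1/\varepsilon$ and of the extra derivative on $n^{\varepsilon}_R$ (which becomes $\partial_x^{4}\phi^{\varepsilon}_R$ after \eqref{rem2-3}): neither can be absorbed term by term. Both are tamed by the same structural fact, namely that the coefficients $1-u$, $n$, $1+\varepsilon\phi^{(1)}$ equal $1$ up to $O(\varepsilon)$ corrections that remain $O(\varepsilon)$ after one $x$-differentiation, so that every integration by parts against such a coefficient gains a power of $\varepsilon$; this has to be paired with careful bookkeeping of which $\varepsilon$-weighted high-order norms of $\phi^{\varepsilon}_R$ are supplied by $\mathcal E_\varepsilon$ — in particular the slot $\varepsilon^{2}\|\partial_x^{4}\phi^{\varepsilon}_R\|^{2}$ in \eqref{def-A} is what lets the case $\gamma=2$ close, which is also why no analogous estimate is available at higher Sobolev orders without introducing further weights.
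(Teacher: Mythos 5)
Your proof is correct and follows essentially the same route as the paper: replace $\partial_x^{\gamma+1}n^\varepsilon_R$ via the Poisson equation \eqref{rem2-3}, decompose $I^{(\gamma)}_1$ accordingly, and exploit that $\frac{1-u}{n}$ and $1+\varepsilon\phi^{(1)}$ are $1+O(\varepsilon)$ with $O(\varepsilon)$ $x$-derivatives so that each integration by parts cancels a $1/\varepsilon$. The only differences are cosmetic bookkeeping: you fold the paper's $I^{(\gamma)}_{11}$ and $I^{(\gamma)}_{13}$ into a single $J_1$ with coefficient $\frac{(1-u)(1+\varepsilon\phi^{(1)})}{\varepsilon n}$, which makes the ``$1+O(\varepsilon)$'' cancellation visible in one shot; and in $J_2$ you stop after integrating by parts once on the cross term, leaving the top derivative at $\partial_x^{\gamma+2}\phi^\varepsilon_R$ with the coefficient $\partial_x\left[\frac{1-u}{n}\right]=O(\varepsilon)$, whereas the paper goes one level further to $\partial_x^{\gamma+1}\phi^\varepsilon_R$ with coefficient $\partial_x^2\left[\frac{1-u}{n}\right]$. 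Both choices close because the weighted slots $\varepsilon\|\partial_x^3\phi^\varepsilon_R\|^2$ and $\varepsilon^2\|\partial_x^4\phi^\varepsilon_R\|^2$ in \eqref{def-A} absorb the extra derivative (e.g.\ for $\gamma=2$: $\varepsilon\|\partial_x^2\phi^\varepsilon_R\|\,\|\partial_x^4\phi^\varepsilon_R\|\le\|\phi^\varepsilon_R\|_{H^2}\cdot\varepsilon\|\partial_x^4\phi^\varepsilon_R\|\le |\!|\!|(u^\varepsilon_R,\phi^\varepsilon_R)|\!|\!|_\varepsilon^2$), so your variant is a valid and slightly cleaner presentation of the same argument.
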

\begin{proof}[Proof of Lemma \ref{Lem-u1}.]
Taking $\partial_x^{\gamma+1}$ of \eqref{rem2-3}, we have
\begin{equation*}
\begin{split}
\partial_{x}^{\gamma+1}n^{\varepsilon}_R=\partial_{x}^{\gamma+1}\phi^{\varepsilon}_R -\varepsilon\partial_x^{\gamma+3}\phi^{\varepsilon}_R +\varepsilon\partial_{x}^{\gamma+1}(\phi^{(1)}\phi^{\varepsilon}_R) +\varepsilon^2\partial_{x}^{\gamma+1}{\mathcal R}_3=:\sum_{i=1}^4B^{(\gamma)}_i.
\end{split}
\end{equation*}
Accordingly, $I^{(\gamma)}_1$ is decomposed into
\begin{equation*}
\begin{split}
I^{(\gamma)}_1=\sum_{i=1}^4\int\partial_x^{\gamma}\phi^{\varepsilon}_R\left[\frac{(1-u)}{\varepsilon n}B^{(\gamma)}_i\right]=:\sum_{i=1}^4I^{(\gamma)}_{1i}.
\end{split}
\end{equation*}

\emph{Estimate of $I^{(\gamma)}_{11}$.} Integrating by parts yields
\begin{equation*}
\begin{split}
I^{(\gamma)}_{11}=&\int\frac{(1-u)}{\varepsilon n}\partial_x^{\gamma}\phi^{\varepsilon}_R\partial_x^{\gamma+1}\phi^{\varepsilon}_R\\ =&-\frac12\int\partial_x\left[\frac{(1-u)}{\varepsilon n}\right]|\partial_x^{\gamma}\phi^{\varepsilon}_R|^2\\
\leq & C\|\partial_x^{\gamma}\phi^{\varepsilon}_R\|^2 +C\varepsilon^2(\|\partial_xu^{\varepsilon}_R\|_{L^{\infty}} +\|\partial_xn^{\varepsilon}_R\|_{L^{\infty}})\|\partial_x^{\gamma}\phi^{\varepsilon}_R\|^2,
\end{split}
\end{equation*}
thanks to the fact
\begin{equation*}
\begin{split}
\partial_x\left[\frac{(1-u)}{\varepsilon n}\right]\leq C((|\partial_x\tilde n|+|\partial_x\tilde u|) +\varepsilon^2(|\partial_xn^{\varepsilon}_R|+|\partial_xu^{\varepsilon}_R|)).
\end{split}
\end{equation*}
Using Sobolev embedding and Lemma \ref{L1}, by \eqref{def-A} we have
\begin{equation}\label{equ61}
\begin{split}
I^{(\gamma)}_{11} \leq & C\|\partial_x^{\gamma}\phi^{\varepsilon}_R\|^2 +C_1(1+\varepsilon|\!|\!|(u^{\varepsilon}_R, u^{\varepsilon}_R)|\!|\!|_{\varepsilon})\|\partial_x^{\gamma}\phi^{\varepsilon}_R\|^2.
\end{split}
\end{equation}

\emph{Estimate of $I^{(\gamma)}_{12}$.} By integration by parts twice, we have
\begin{equation}\label{equ62}
\begin{split}
I^{(\gamma)}_{12}=&-\int\partial_x^{\gamma}\phi^{\varepsilon}_R\left[\frac{(1-u)}{n} \partial_x^{\gamma+3}\phi^{\varepsilon}_R\right]\\
=&-\frac32\int\partial_x\left[\frac{(1-u)}{n}\right] |\partial_x^{\gamma+1}\phi^{\varepsilon}_R|^2 -\int\partial_x^2\left[\frac{(1-u)}{n}\right] \partial_x^{\gamma}\phi^{\varepsilon}_R \partial_x^{\gamma+1}\phi^{\varepsilon}_R\\
=&:I^{(\gamma)}_{121}+I^{(\gamma)}_{122}.
\end{split}
\end{equation}
Note that
\begin{equation*}
\begin{split}
\partial_x\left[\frac{(V-u)}{n}\right]\leq C\Big(\varepsilon(|\partial_x\tilde n|+|\partial_x\tilde u|) +\varepsilon^3(|\partial_xn^{\varepsilon}_R|+|\partial_xu^{\varepsilon}_R|)\Big).
\end{split}
\end{equation*}
Similar to the bound for $I^{(\gamma)}_{11}$ in \eqref{equ61}, we have
\begin{equation}\label{equ63}
\begin{split}
I^{(\gamma)}_{121} \leq & C\varepsilon\|\partial_x^{\gamma+1}\phi^{\varepsilon}_R\|^2 +C_1(1+\varepsilon|\!|\!|(u^{\varepsilon}_R, u^{\varepsilon}_R)|\!|\!|_{\varepsilon}) (\varepsilon\|\partial_x^{\gamma+1}\phi^{\varepsilon}_R\|^2).
\end{split}
\end{equation}
Note that
\begin{equation*}
\begin{split}
\left|\partial_x^2\left[\frac{(1-u)}{n}\right]\right|\leq & C\Big(\varepsilon +\varepsilon^3(|\partial_x^2n^{\varepsilon}_R|+|\partial_x^2u^{\varepsilon}_R|)\\
& +\varepsilon^4(|\partial_xn^{\varepsilon}_R|+|\partial_xu^{\varepsilon}_R|) +\varepsilon^6(|\partial_xn^{\varepsilon}_R|^2+|\partial_xu^{\varepsilon}_R|^2)\Big).
\end{split}
\end{equation*}
By H\"older inequality, Sobolev embedding and Lemma \ref{L1} for $0\leq\gamma\leq2$, we obtain
\begin{equation}\label{equ64}
\begin{split}
I^{(\gamma)}_{122}\leq & C\|\partial_x^{\gamma}\phi^{\varepsilon}_R\|_{L^{\infty}}\|\partial_x^2\left[\frac{(V-u)}{n}\right]\| \|\partial_x^{\gamma+1}\phi^{\varepsilon}_R\|\\
\leq & C\varepsilon\|\partial_x^{\gamma}\phi^{\varepsilon}_R\|_{H^1} (1+\varepsilon^2(\|n^{\varepsilon}_R\|_{H^2}^2+\|u^{\varepsilon}_R\|_{H^2}^2)) \|\partial_x^{\gamma+1}\phi^{\varepsilon}_R\|\\
\leq & C_1(1+\varepsilon^2|\!|\!|(u^{\varepsilon}_R, u^{\varepsilon}_R)|\!|\!|_{\varepsilon}^2) (\varepsilon\|\phi^{\varepsilon}_R\|^2_{H^{\gamma+1}}).
\end{split}
\end{equation}
Therefore, combining \eqref{equ62}, \eqref{equ63} and \eqref{equ64}, we obtain
\begin{equation*}
\begin{split}
I^{(\gamma)}_{12}\leq C_1(1+\varepsilon^2|\!|\!|(u^{\varepsilon}_R, u^{\varepsilon}_R)|\!|\!|_{\varepsilon}^2) (\varepsilon\|\phi^{\varepsilon}_R\|^2_{H^{\gamma+1}}).
\end{split}
\end{equation*}

\emph{Estimate for $I^{(\gamma)}_{13}$.} The estimate for $I^{(\gamma)}_{13}$ is similar to that for $I^{(\gamma)}_{11}$ in \eqref{equ61},
\begin{equation*}
\begin{split}
I^{(\gamma)}_{13} \leq & C_1(1+\varepsilon|\!|\!|(u^{\varepsilon}_R, u^{\varepsilon}_R)|\!|\!|_{\varepsilon})\|\phi^{\varepsilon}_R\|_{H^{\gamma}}^2.
\end{split}
\end{equation*}

\emph{Estimate of $I^{(\gamma)}_{14}$.} By integration by parts and Lemma \ref{L8}, we deduce
\begin{equation*}
\begin{split}
I^{(2)}_{14}=&\varepsilon\int\partial_x^{\gamma}\phi^{\varepsilon}_R \left[\frac{(1-u)}{n} \partial_{x}^{\gamma+1}{\mathcal R}_3\right]\\
=&-\varepsilon\int\partial_x\left[\frac{(1-u)}{n}\right] \partial_x^{\gamma}\phi^{\varepsilon}_R\partial_x^{\gamma}\mathcal R_3-\varepsilon\int\frac{(1-u)}{n}\partial_x^{\gamma+1}\phi^{\varepsilon}_R \partial_x^{\gamma}\mathcal R_3\\
\leq & C_1\varepsilon^2(1+\|\partial_xn^{\varepsilon}_R\|_{L^{\infty}} +\|\partial_xu^{\varepsilon}_R\|_{L^{\infty}})\|\phi^{\varepsilon}_R\|_{H^{\gamma}}^2 +C_1\varepsilon\|\phi^{\varepsilon}_R\|_{H^{\gamma+1}}\|\phi^{\varepsilon}_R\|_{H^{\gamma}}\\
\leq & C_1(1+\varepsilon|\!|\!|(u^{\varepsilon}_R, u^{\varepsilon}_R)|\!|\!|_{\varepsilon})(\varepsilon\|\phi^{\varepsilon}_R\|_{H^{\gamma+1}}^2).
\end{split}
\end{equation*}
The proof of Lemma \ref{Lem-u1} is then complete.
\end{proof}

\begin{lemma}[\textbf{\emph{Estimate of $I^{(\gamma)}_{2}$}}]\label{Lem-u2} Let $(n^{\varepsilon}_R,u^{\varepsilon}_R,\phi^{\varepsilon}_R)$ be a solution to \eqref{rem2} and $0\leq\gamma\leq2$. The following inequality holds
\begin{equation*}
\begin{split}
I_{2}^{(\gamma)}\leq & -\frac12\frac{d}{dt}\int\frac{1+\varepsilon\phi^{(1)}}{n} |\partial_x^{\gamma}\phi^{\varepsilon}_R|^2dx -\frac12\frac{d}{dt}\int\frac{\varepsilon}n |\partial_x^{\gamma+1}\phi^{\varepsilon}_R|^2dx\\
& +C_1(1 +\varepsilon^2|\!|\!|(u^{\varepsilon}_R,\phi^{\varepsilon}_R)|\!|\!|^2_{\varepsilon}) (1+|\!|\!|(u^{\varepsilon}_R,\phi^{\varepsilon}_R)|\!|\!|^2_{\varepsilon}),
\end{split}
\end{equation*}
where $I^{(\gamma)}_{2}$ is defined in \eqref{equ60} and $|\!|\!|(u^{\varepsilon}_R,\phi^{\varepsilon}_R)|\!|\!|_{\varepsilon}$ is given in \eqref{def-A}.
\end{lemma}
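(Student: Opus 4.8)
The plan is to analyze the term $I_2^{(\gamma)} = \int \partial_x^\gamma \phi^\varepsilon_R \, A_2^{(\gamma)} \, dx$ where, from \eqref{equ59}, $A_2^{(\gamma)} = -\frac{1}{n}\partial_t \partial_x^\gamma n^\varepsilon_R$. So $I_2^{(\gamma)} = -\int \frac{1}{n} \partial_x^\gamma\phi^\varepsilon_R \, \partial_t\partial_x^\gamma n^\varepsilon_R \, dx$. The key idea is to eliminate $\partial_t\partial_x^\gamma n^\varepsilon_R$ using the differentiated Poisson equation \eqref{rem2-3}: applying $\partial_t\partial_x^\gamma$ gives $\partial_t\partial_x^\gamma n^\varepsilon_R = \partial_t\partial_x^\gamma\phi^\varepsilon_R - \varepsilon\,\partial_t\partial_x^{\gamma+2}\phi^\varepsilon_R + \varepsilon\,\partial_t\partial_x^\gamma(\phi^{(1)}\phi^\varepsilon_R) + \varepsilon^2\,\partial_t\partial_x^\gamma\mathcal R_3$. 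Substituting this into $I_2^{(\gamma)}$ splits it into four pieces. The first two pieces, $-\int\frac{1}{n}\partial_x^\gamma\phi^\varepsilon_R\,\partial_t\partial_x^\gamma\phi^\varepsilon_R\,dx$ and $+\varepsilon\int\frac{1}{n}\partial_x^\gamma\phi^\varepsilon_R\,\partial_t\partial_x^{\gamma+2}\phi^\varepsilon_R\,dx$, are the ones that will produce the claimed exact time-derivative structure; the remaining two pieces carry a factor $\varepsilon$ and will be bounded by the $C_1(1+\varepsilon^2|\!|\!|\cdots|\!|\!|^2_\varepsilon)(1+|\!|\!|\cdots|\!|\!|^2_\varepsilon)$ right-hand side.

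For the first piece, I would write $-\int\frac1n\partial_x^\gamma\phi^\varepsilon_R\partial_t\partial_x^\gamma\phi^\varepsilon_R = -\frac12\frac{d}{dt}\int\frac1n|\partial_x^\gamma\phi^\varepsilon_R|^2 + \frac12\int\partial_t\!\big(\frac1n\big)|\partial_x^\gamma\phi^\varepsilon_R|^2$, and similarly combine with the $\varepsilon\phi^{(1)}$ correction term so that the total differentiated coefficient is $\frac{1+\varepsilon\phi^{(1)}}{n}$; here $\partial_t(1/n) = -\partial_t n/n^2$ is controlled since $\partial_t n = \varepsilon\partial_t\tilde n + \varepsilon^3\partial_t n^\varepsilon_R$ and $\|\varepsilon\partial_t n^\varepsilon_R\|_{H^1}$ is bounded in terms of $|\!|\!|(u^\varepsilon_R,\phi^\varepsilon_R)|\!|\!|_\varepsilon$ by Lemma \ref{L2}. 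For the second piece, integrate by parts twice in $x$ to move derivatives off $\partial_t\partial_x^{\gamma+2}\phi^\varepsilon_R$: $\varepsilon\int\frac1n\partial_x^\gamma\phi^\varepsilon_R\partial_t\partial_x^{\gamma+2}\phi^\varepsilon_R = -\varepsilon\int\frac1n\partial_x^{\gamma+1}\phi^\varepsilon_R\,\partial_t\partial_x^{\gamma+1}\phi^\varepsilon_R + (\text{commutator terms with }\partial_x(1/n)) = -\frac{\varepsilon}2\frac{d}{dt}\int\frac1n|\partial_x^{\gamma+1}\phi^\varepsilon_R|^2 + \frac{\varepsilon}2\int\partial_t(\tfrac1n)|\partial_x^{\gamma+1}\phi^\varepsilon_R|^2 + (\text{lower-order commutators})$. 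The commutator terms involve $\partial_x(1/n)$, which carries an extra $\varepsilon$, paired against $\partial_t\partial_x^{\gamma+1}\phi^\varepsilon_R$ or $\partial_t\partial_x^\gamma\phi^\varepsilon_R$; these time derivatives are controlled via Lemma \ref{L3} together with Lemma \ref{L2}, and the extra powers of $\varepsilon$ absorb the loss.

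The main obstacle is the book-keeping of exactly which terms combine into the clean expression $-\frac12\frac{d}{dt}\big[\int\frac{1+\varepsilon\phi^{(1)}}{n}|\partial_x^\gamma\phi^\varepsilon_R|^2 + \int\frac{\varepsilon}{n}|\partial_x^{\gamma+1}\phi^\varepsilon_R|^2\big]$ and which get thrown into the error — in particular, one must check that the potentially dangerous term $\varepsilon\int\partial_x(1/n)\,\partial_x^{\gamma+1}\phi^\varepsilon_R\,\partial_t\partial_x^{\gamma+1}\phi^\varepsilon_R$, which would feature $\partial_t\partial_x^{3}\phi^\varepsilon_R$ when $\gamma=2$ (borderline for $|\!|\!|\cdot|\!|\!|_\varepsilon$), actually comes with enough powers of $\varepsilon$. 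Indeed $\partial_x(1/n) = O(\varepsilon)$, and $\varepsilon\cdot\varepsilon\|\partial_x^{\gamma+1}\phi^\varepsilon_R\|\,\|\partial_t\partial_x^{\gamma+1}\phi^\varepsilon_R\|$ with Lemma \ref{L3} bounding $\|\partial_t\partial_x^{\gamma+1}\phi^\varepsilon_R\|$ by $\|\partial_t\partial_x^{\gamma+1}n^\varepsilon_R\|+C_1$, and then Lemma \ref{L2} (for $\gamma\le 1$) keeps everything inside $|\!|\!|\cdot|\!|\!|_\varepsilon^2$; the genuinely delicate case $\gamma=2$ is where the $\varepsilon^2\|\partial_x^4\phi^\varepsilon_R\|^2$ and $\varepsilon\|\partial_x^3\phi^\varepsilon_R\|^2$ terms built into $|\!|\!|\cdot|\!|\!|_\varepsilon$ are exactly what is needed, and this is the point flagged in the introduction about the interplay between $\mathcal B^{(2)}$ and $\mathcal B^{(3\times\varepsilon)}$ — so for $\gamma=2$ I would be careful to retain the term $\frac12\frac{d}{dt}\int\frac{\varepsilon}n|\partial_x^3\phi^\varepsilon_R|^2$ on the left rather than estimating it, exactly as the statement records. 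The remaining pieces ($\varepsilon\int\frac1n\partial_x^\gamma\phi^\varepsilon_R\,\partial_t\partial_x^\gamma(\phi^{(1)}\phi^\varepsilon_R)$ and $\varepsilon^2\int\frac1n\partial_x^\gamma\phi^\varepsilon_R\,\partial_t\partial_x^\gamma\mathcal R_3$) are routine: expand by Leibniz, use that $\phi^{(1)}$ is a known smooth profile, use \eqref{equ17} and Lemma \ref{L8} to control $\partial_t\partial_x^\gamma\mathcal R_3$ by $\|\phi^\varepsilon_R\|_{H^\gamma}+\|\partial_t\phi^\varepsilon_R\|_{H^\gamma}$, and close with Lemma \ref{L3} and Lemma \ref{L2}.
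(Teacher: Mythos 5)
Your high-level plan matches the paper's: substitute $\partial_t\partial_x^\gamma n^\varepsilon_R$ from the $\partial_t\partial_x^\gamma$-differentiated Poisson equation \eqref{rem2-3}, split $I_2^{(\gamma)}$ into four pieces, extract the exact time derivatives $-\tfrac12\tfrac{d}{dt}\int\tfrac{1+\varepsilon\phi^{(1)}}{n}|\partial_x^\gamma\phi^\varepsilon_R|^2$ and $-\tfrac12\tfrac{d}{dt}\int\tfrac\varepsilon n|\partial_x^{\gamma+1}\phi^\varepsilon_R|^2$ from pieces one and two, and estimate the rest. But your handling of the commutator in the second piece has a genuine gap.

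After one integration by parts, the commutator is $-\int\partial_x\!\left[\tfrac{\varepsilon}{n}\right]\partial_x^\gamma\phi^\varepsilon_R\,\partial_t\partial_x^{\gamma+1}\phi^\varepsilon_R$, which pairs $\partial_x^\gamma\phi^\varepsilon_R$ against $\partial_t\partial_x^{\gamma+1}\phi^\varepsilon_R$. The term you flagged as dangerous, $\varepsilon\int\partial_x(1/n)\,\partial_x^{\gamma+1}\phi^\varepsilon_R\,\partial_t\partial_x^{\gamma+1}\phi^\varepsilon_R$, is not what actually appears; this misidentification matters because it obscures the power count. More seriously, you do not carry out the decisive second integration by parts that rewrites this commutator as $\mathcal B^{(\gamma)}=\int\partial_x^{\gamma+1}\phi^\varepsilon_R\,\partial_x\!\left[\tfrac{\varepsilon}{n}\right]\partial_t\partial_x^\gamma\phi^\varepsilon_R$ plus $\int\partial_x^\gamma\phi^\varepsilon_R\,\partial_x^2\!\left[\tfrac{\varepsilon}{n}\right]\partial_t\partial_x^\gamma\phi^\varepsilon_R$. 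The point of this move is to bring the time derivative down from $\partial_t\partial_x^{\gamma+1}\phi^\varepsilon_R$ to $\partial_t\partial_x^\gamma\phi^\varepsilon_R$. Only then can one close for $\gamma=2$: applying Lemma \ref{L3} with $\alpha=1$ gives $\varepsilon\|\partial_t\partial_x^2\phi^\varepsilon_R\|^2\le 2\|\partial_{tx}n^\varepsilon_R\|^2+C_1$, and after multiplying by $\varepsilon^2$ one lands on $\|\varepsilon\partial_{tx}n^\varepsilon_R\|^2$, exactly what Lemma \ref{L2} controls. Staying at $\partial_t\partial_x^{\gamma+1}\phi^\varepsilon_R$ with $\gamma=2$ would require controlling $\|\partial_t\partial_x^2 n^\varepsilon_R\|$, which Lemma \ref{L2} does \emph{not} provide (it stops at $\|\varepsilon\partial_t n^\varepsilon_R\|_{H^1}$), so the estimate would not close.

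Finally, your appeal to the $\mathcal B^{(2)}$--$\mathcal B^{(3\times\varepsilon)}$ cancellation is misplaced for this lemma. Lemma \ref{Lem-u2} is a self-contained bound, valid for each $\gamma=0,1,2$, and in the paper $\mathcal B^{(\gamma)}$ is estimated directly in \eqref{equ38} for all $\gamma\le 2$. The cancellation $\mathcal B^{(2)}+\mathcal B^{(3\times\varepsilon)}$ is only invoked later, in Corollary \ref{Cor} (which deliberately re-extracts $\mathcal B^{(2)}$ unestimated) together with Proposition \ref{L7} and Proposition \ref{PB}, to handle the weighted third-order estimate. Conflating the two stages hides the fact that your Lemma \ref{Lem-u2} estimate for $\gamma=2$ would not actually close without the second integration by parts.
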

\begin{proof}[Proof of Lemma \ref{Lem-u2}.]
Taking $\partial_x\partial_x^{\gamma}$ of \eqref{rem2-3}, we have
\begin{equation*}
\begin{split}
\partial_t\partial_{x}^{\gamma}n^{\varepsilon}_R= \partial_t\partial_{x}^{\gamma}\phi^{\varepsilon}_R -\varepsilon\partial_t\partial_x^{\gamma+2}\phi^{\varepsilon}_R +\varepsilon\partial_t\partial_{x}^{\gamma}(\phi^{(1)}\phi^{\varepsilon}_R) +\varepsilon^2\partial_t\partial_{x}^{\gamma}{\mathcal R}_3=:\sum_iD^{(2)}_i.
\end{split}
\end{equation*}
Accordingly, we have the decomposition
\begin{equation}\label{equ65}
\begin{split}
I^{(\gamma)}_2=&-\sum_{i=1}^4\int\frac{1}{n}\partial_x^{\gamma}\phi^{\varepsilon}_R D_i=:\sum_{i=1}^4I^{(\gamma)}_{2i}.
\end{split}
\end{equation}

\emph{Estimate of $I^{({\gamma})}_{21}$.} By integration by parts, we obtain
\begin{equation*}
\begin{split}
I_{21}^{({\gamma})}=&-\int\frac1n\partial_{x}^{\gamma}\phi^{\varepsilon}_R \partial_{t}\partial_x^{\gamma}\phi^{\varepsilon}_R\\
=&-\frac12\frac{d}{dt}\int\frac1n|\partial_{x}^{\gamma}\phi^{\varepsilon}_R|^2 +\frac12\int\partial_t\left[\frac1n\right]|\partial_{x}^{\gamma}\phi^{\varepsilon}_R|^2,
\end{split}
\end{equation*}
where the second term on the RHS is bounded by Lemma \ref{L2}
\begin{equation*}
\begin{split}
\frac12\int\partial_t\left[\frac1n\right]|\partial_{x}^{\gamma}\phi^{\varepsilon}_R|^2=& -\frac12\int\left[\frac{\varepsilon\partial_t\tilde n+\varepsilon^3\partial_tn^{\varepsilon}_R}{n^2}\right] |\partial_{x}^{\gamma}\phi^{\varepsilon}_R|^2\\
\leq & C\varepsilon\|\partial_{x}^{\gamma}\phi^{\varepsilon}_R\|^2 +\varepsilon^3\|\varepsilon\partial_tn^{\varepsilon}_R\|^2 \|\partial_{x}^{\gamma}\phi^{\varepsilon}_R\|^2 +\varepsilon\|\partial_{x}^{\gamma}\phi^{\varepsilon}_R\|^2_{L^{\infty}}\\
\leq & C_1(1+\varepsilon^2|\!|\!|(u^{\varepsilon}_R,\phi^{\varepsilon}_R) |\!|\!|^2_{\varepsilon})(\varepsilon\|\phi^{\varepsilon}_R\|^2_{H^{\gamma+1}}).
\end{split}
\end{equation*}
Hence
\begin{equation}\label{equ66}
\begin{split}
I_{21}^{(\gamma)}\leq &-\frac12\frac{d}{dt}\int\frac1n|\partial_{x}^{\gamma}\phi^{\varepsilon}_R|^2 +C_1(1+\varepsilon^2|\!|\!|(u^{\varepsilon}_R,\phi^{\varepsilon}_R) |\!|\!|^2_{\varepsilon})(\varepsilon\|\phi^{\varepsilon}_R\|^2_{H^{\gamma+1}}).
\end{split}
\end{equation}

\emph{Estimate of $I^{(\gamma)}_{22}$.} By integration by parts, we have
\begin{equation*}
\begin{split}
I_{22}^{(\gamma)}=&\int\frac{\varepsilon}{n}\partial_{x}^{\gamma}\phi^{\varepsilon}_R \partial_{t}\partial_x^{\gamma+2}\phi^{\varepsilon}_R\\
=&-\int\frac{\varepsilon}{n}\partial_{x}^{\gamma+1}\phi^{\varepsilon}_R \partial_{t}\partial_x^{\gamma+1}\phi^{\varepsilon}_R -\int\partial_x\left[\frac{\varepsilon}n\right]\partial_{x}^{\gamma}\phi^{\varepsilon}_R \partial_{t} \partial_x^{\gamma+1}\phi^{\varepsilon}_R\\
=& :I_{221}^{(\gamma)}+I_{222}^{(\gamma)}.
\end{split}
\end{equation*}
The first term is estimated by Sobolev embedding, Lemma \ref{L3} and \ref{L2} as
\begin{equation}\label{equ71}
\begin{split}
I_{221}^{({\gamma})}=&-\frac12\frac{d}{dt}\int\frac{\varepsilon}n |\partial_x^{\gamma+1}\phi^{\varepsilon}_R|^2 +\frac12\int\partial_t\left[\frac{\varepsilon}n\right] |\partial_x^{\gamma+1}\phi^{\varepsilon}_R|^2\\
\leq & -\frac12\frac{d}{dt}\int\frac{\varepsilon}n |\partial_x^{\gamma+1}\phi^{\varepsilon}_R|^2+ C\varepsilon(1+\varepsilon^2\|\partial_tn^{\varepsilon}_R\|_{L^{\infty}}) (\varepsilon\|\partial_x^{\gamma+1}\phi^{\varepsilon}_R\|^2)\\
\leq & -\frac12\frac{d}{dt}\int\frac{\varepsilon}n |\partial_x^{\gamma+1}\phi^{\varepsilon}_R|^2+ C_1\varepsilon(1+\varepsilon|\!|\!|(u^{\varepsilon}_R,\phi^{\varepsilon}_R) |\!|\!|_{\varepsilon}) (\varepsilon\|\partial_x^{\gamma+1}\phi^{\varepsilon}_R\|^2),
\end{split}
\end{equation}
where $|\!|\!|(u^{\varepsilon}_R,\phi^{\varepsilon}_R)|\!|\!|_{\varepsilon}$ is defined in \eqref{def-A}.

For $I_{222}^{(\gamma)}$, integration by parts yields
\begin{equation}\label{equ-B2}
\begin{split}
I_{222}^{(\gamma)}=\underbrace{\int\partial_{x}^{\gamma+1}\phi^{\varepsilon}_R \partial_x\left[\frac{\varepsilon}n\right]\partial_{t} \partial_x^{\gamma}\phi^{\varepsilon}_R}_{:=\mathcal B^{(\gamma)}} +\underbrace{\int\partial_{x}^{\gamma}\phi^{\varepsilon}_R \partial_x^{2}\left[\frac{\varepsilon}n\right] \partial_{t}\partial_x^{\gamma}\phi^{\varepsilon}_R}_{I^{(\gamma)}_{2221}}.
\end{split}
\end{equation}
We first bound $\mathcal B^{(\gamma)}$ in \eqref{equ-B2}. Since $0\leq \gamma\leq 2$, by Lemma \ref{L3} with $\alpha=1$, and Lemma \ref{L2} and \ref{L1}, we have
\begin{equation}\label{equ38}
\begin{split}
\mathcal B^{(\gamma)}=&-\int\varepsilon(\frac{\partial_x\tilde n+\varepsilon^2\partial_xn^{\varepsilon}_R}{n^2})\partial_x^{\gamma+1}\phi^{\varepsilon}_R (\varepsilon\partial_t\partial_x^{\gamma}\phi^{\varepsilon}_R)dx \\
\leq & C\varepsilon\|\varepsilon\partial_{t}\partial_x^{\gamma}\phi^{\varepsilon}_R\|^2 +C(1+\varepsilon^4\|\partial_xn^{\varepsilon}_R\|^2_{L^{\infty}}) (\varepsilon\|\partial_x^{\gamma+1}\phi^{\varepsilon}_R\|^2) \\
\leq & C_1(1+|\!|\!|(u^{\varepsilon}_R,\phi^{\varepsilon}_R)|\!|\!|^2_{\varepsilon}) +C_1(1+\varepsilon^2|\!|\!|(u^{\varepsilon}_R,\phi^{\varepsilon}_R) |\!|\!|^2_{\varepsilon}) (\varepsilon\|\phi^{\varepsilon}_R\|_{H^{\gamma+1}}^2).
\end{split}
\end{equation}
We now estimate $I^{(\gamma)}_{2221}$ in \eqref{equ-B2}. Note that
\begin{equation*}
\begin{split}
\left|\partial_x^2\left[\frac{\varepsilon}{n}\right]\right|\leq & C\varepsilon^2(1+\varepsilon^2|\partial_x^2n^{\varepsilon}_R| +\varepsilon^3|\partial_xn^{\varepsilon}_R| +\varepsilon^5|\partial_xn^{\varepsilon}_R|^2).
\end{split}
\end{equation*}
By H\"older inequality and Lemma \ref{L3} with $\alpha=1$,
\begin{equation}\label{equ67}
\begin{split}
\varepsilon^2\int|\partial_{x}^{\gamma}\phi^{\varepsilon}_R| |\partial_{t}\partial_x^{\gamma}\phi^{\varepsilon}_R|
\leq & C\varepsilon\|\partial_{x}^{\gamma}\phi^{\varepsilon}_R\|^2 +C\varepsilon\|\varepsilon\partial_{t}\partial_x^{\gamma}\phi^{\varepsilon}_R\|^2\\
\leq & C\varepsilon\|\partial_{x}^{\gamma}\phi^{\varepsilon}_R\|^2 +C_1(1+|\!|\!|(u^{\varepsilon}_R,\phi^{\varepsilon}_R)|\!|\!|^2_{\varepsilon}).
\end{split}
\end{equation}
By Lemma \ref{L1}, Lemma \ref{L3} and \ref{L2},
\begin{equation}\label{equ68}
\begin{split}
\varepsilon^4\int&|\partial_{x}^{\gamma}\phi^{\varepsilon}_R| |\partial_{x}^2n^{\varepsilon}_R||\partial_{t}\partial_x^{\gamma}\phi^{\varepsilon}_R|\\
\leq & C\varepsilon^2\|\partial_x^2n^{\varepsilon}_R\|^2 (\varepsilon\|\partial_{x}^{\gamma}\phi^{\varepsilon}_R\|^2_{L^{\infty}}) +C\varepsilon^2 (\varepsilon\|\varepsilon\partial_{t}\partial_x^{\gamma}\phi^{\varepsilon}_R\|^2)\\
\leq & C_1\varepsilon^2|\!|\!|(u^{\varepsilon}_R,\phi^{\varepsilon}_R)|\!|\!|^2_{\varepsilon} (\varepsilon\|\phi^{\varepsilon}_R\|^2_{H^{\gamma+1}}) +C_1\varepsilon^2(1 +|\!|\!|(u^{\varepsilon}_R,\phi^{\varepsilon}_R)|\!|\!|^2_{\varepsilon}).
\end{split}
\end{equation}
By H\"older inequality and Lemma \ref{L3}, \ref{L2} and \ref{L1}
\begin{equation}\label{equ69}
\begin{split}
\varepsilon^5\int&|\partial_{x}^{\gamma}\phi^{\varepsilon}_R| (|\partial_{x}n^{\varepsilon}_R|+\varepsilon^2|\partial_{x}n^{\varepsilon}_R|^2) |\partial_{t}\partial_x^{\gamma}\phi^{\varepsilon}_R|\\
\leq & \varepsilon^2(1+\|\partial_{x}n^{\varepsilon}_R\|_{L^{\infty}}^2) (\varepsilon\|\varepsilon\partial_t\partial_{x}^{\gamma}\phi^{\varepsilon}_R\|^2) +C\varepsilon^2\|\partial_{x}n^{\varepsilon}_R\|_{L^{\infty}}^2 \|\partial_{x}^{\gamma}\phi^{\varepsilon}_R\|^2\\
\leq & C_1(1+\varepsilon^2|\!|\!|(u^{\varepsilon}_R,\phi^{\varepsilon}_R)|\!|\!|^2_{\varepsilon}) (1+|\!|\!|(u^{\varepsilon}_R,\phi^{\varepsilon}_R)|\!|\!|^2_{\varepsilon}).
\end{split}
\end{equation}
Summarizing \eqref{equ-B2}, \eqref{equ67}-\eqref{equ69} , we have
\begin{equation}\label{equ70}
\begin{split}
I^{(\gamma)}_{2221} \leq & C_1(1+\varepsilon^2|\!|\!|(u^{\varepsilon}_R,\phi^{\varepsilon}_R)|\!|\!|^2_{\varepsilon}) (1+|\!|\!|(u^{\varepsilon}_R,\phi^{\varepsilon}_R)|\!|\!|^2_{\varepsilon}).
\end{split}
\end{equation}
Therefore, by \eqref{equ71}, \eqref{equ38} and \eqref{equ70}, we obtain
\begin{equation*}
\begin{split}
I^{(\gamma)}_{22} \leq & -\frac12\frac{d}{dt}\int\frac{\varepsilon}n |\partial_x^{\gamma+1}\phi^{\varepsilon}_R|^2+C_1(1+\varepsilon^2|\!|\!|(u^{\varepsilon}_R,\phi^{\varepsilon}_R)|\!|\!|^2_{\varepsilon}) (1+|\!|\!|(u^{\varepsilon}_R,\phi^{\varepsilon}_R)|\!|\!|^2_{\varepsilon}).
\end{split}
\end{equation*}

\emph{Estimate of $I^{(\gamma)}_{23}$.} The estimate for $I^{(\gamma)}_{23}$ in \eqref{equ65} is no more difficult than that of $I^{(\gamma)}_{21}$ and can be bounded by
\begin{equation*}
\begin{split}
I_{23}^{(\gamma)}\leq &-\frac12\frac{d}{dt}\int\frac{\varepsilon\phi^{(1)}}n |\partial_{x}^{\gamma}\phi^{\varepsilon}_R|^2 +C(1+\varepsilon|\!|\!|(u^{\varepsilon}_R,\phi^{\varepsilon}_R) |\!|\!|^2_{\varepsilon})(\varepsilon\|\phi^{\varepsilon}_R\|^2_{H^3}).
\end{split}
\end{equation*}
\emph{Estimate of $I^{(\gamma)}_{24}$.} By using Lemma \ref{L8}, and then Lemma \ref{L3} with $\alpha=1$ and Lemma \ref{L2}, we have
\begin{equation*}
\begin{split}
I^{(\gamma)}_{24}=&-\int\frac{\varepsilon^2}{n}\partial_x^{\gamma}\phi^{\varepsilon}_R \partial_t\partial_{x}^{\gamma}{\mathcal R}_3\\
\leq & C\|\partial_x^{\gamma}\phi^{\varepsilon}_R\|^2+\varepsilon C(\|\phi^{(i)}\|_{H^{\tilde s_i}},\varepsilon\|\phi^{\varepsilon}_R\|_{H^{2}}) (\varepsilon\|\varepsilon\partial_t\phi^{\varepsilon}_R\|_{H^{\gamma}}^2)\\
\leq & C_1(1+|\!|\!|(u^{\varepsilon}_R,\phi^{\varepsilon}_R) |\!|\!|^2_{\varepsilon}).
\end{split}
\end{equation*}
The proof of Lemma \ref{Lem-u2} is then complete.
\end{proof}

When $\gamma=2$, by extracting the term $\mathcal B^{(2)}$ from \eqref{equ-B2}, we have the following
\begin{corollary}\label{Cor}
Let $(n^{\varepsilon}_R,u^{\varepsilon}_R,\phi^{\varepsilon}_R)$ be a solution to \eqref{rem2} and $0\leq\gamma\leq2$, then
\begin{equation*}
\begin{split}
\frac12\frac{d}{dt}&\bigg[\|\partial_x^2u^{\varepsilon}_R\|^2\bigg] +\frac12\frac{d}{dt}\bigg[\bigg(\int\frac{1+\varepsilon\phi^{(1)}}{n} |\partial_x^2\phi^{\varepsilon}_R|^2 +\int\frac{\varepsilon}{n}|\partial_x^3\phi^{\varepsilon}_R|^2\bigg) \bigg] \\
&\leq   C_1(1+\varepsilon^2|\!|\!|(u^{\varepsilon}_R,\phi^{\varepsilon}_R) |\!|\!|^2_{\varepsilon}) (1+|\!|\!|(u^{\varepsilon}_R,\phi^{\varepsilon}_R)|\!|\!|^2_{\varepsilon}) +\mathcal B^{(2)},
\end{split}
\end{equation*}
where
\begin{equation*}
\begin{split}
\mathcal B^{(2)}=\int\partial_x\left[\frac{\varepsilon}n\right]\partial_{x}^3\phi^{\varepsilon}_R \partial_{t} \partial_x^2\phi^{\varepsilon}_R.
\end{split}
\end{equation*}
\end{corollary}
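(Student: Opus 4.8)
The plan is to retrace the proof of Proposition \ref{L4} in the case $\gamma=2$ line by line, with a single deliberate change of bookkeeping at the step \eqref{equ-B2}: instead of absorbing the piece $\mathcal B^{(\gamma)}$ into the remainder via \eqref{equ38}, I would keep it, since it is exactly this term that must later be paired with the bad third-order contribution $\mathcal B^{(3\times\varepsilon)}$ of \eqref{equ41} through the Poisson equation \eqref{rem2-3}. Concretely, I first take $\partial_x^2$ of \eqref{rem2-2}, pair with $\partial_x^2 u^{\varepsilon}_R$ and integrate by parts to reach \eqref{u1} with $\gamma=2$; the second term there drops, and the third, fourth and fifth terms on the left are bounded as in \eqref{left}, hence by $C_1(1+\varepsilon^2|\!|\!|(u^{\varepsilon}_R,\phi^{\varepsilon}_R)|\!|\!|_{\varepsilon}^2)(1+|\!|\!|(u^{\varepsilon}_R,\phi^{\varepsilon}_R)|\!|\!|_{\varepsilon}^2)$, using $\|u^{\varepsilon}_R\|_{H^2}\le|\!|\!|(u^{\varepsilon}_R,\phi^{\varepsilon}_R)|\!|\!|_{\varepsilon}$.

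Next I would expand the right-hand side $I^{(2)}$ via \eqref{equ59}--\eqref{equ60} into $\sum_{i=1}^{7}I^{(2)}_i$. The terms $I^{(2)}_i$ with $3\le i\le 7$ are handled exactly as in the proof of Proposition \ref{L4} (H\"older, the embedding $H^1\hookrightarrow L^{\infty}$, and Lemma \ref{L1} to convert every $\partial_x^{\beta}n^{\varepsilon}_R$ with $\beta\le 2$ into the triple norm — and for $\gamma=2$ no higher $n^{\varepsilon}_R$-derivative ever occurs), while $I^{(2)}_1$ is controlled by Lemma \ref{Lem-u1}; all of these produce $C_1(1+\varepsilon^2|\!|\!|(u^{\varepsilon}_R,\phi^{\varepsilon}_R)|\!|\!|_{\varepsilon}^2)(1+|\!|\!|(u^{\varepsilon}_R,\phi^{\varepsilon}_R)|\!|\!|_{\varepsilon}^2)$. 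For $I^{(2)}_2$ I would rerun the proof of Lemma \ref{Lem-u2} with $\gamma=2$: in the decomposition \eqref{equ65}, $I^{(2)}_{21}$ contributes $-\frac12\frac{d}{dt}\int\frac1n|\partial_x^2\phi^{\varepsilon}_R|^2$ (cf.\ \eqref{equ66}), $I^{(2)}_{23}$ contributes $-\frac12\frac{d}{dt}\int\frac{\varepsilon\phi^{(1)}}{n}|\partial_x^2\phi^{\varepsilon}_R|^2$, $I^{(2)}_{24}$ is controlled by Lemma \ref{L8} together with Lemmas \ref{L2} and \ref{L3}, and, after splitting $I^{(2)}_{22}=I^{(2)}_{221}+I^{(2)}_{222}$, the term $I^{(2)}_{221}$ contributes $-\frac12\frac{d}{dt}\int\frac{\varepsilon}{n}|\partial_x^3\phi^{\varepsilon}_R|^2$ (cf.\ \eqref{equ71}), each up to remainders of the form just displayed. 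The only departure is in $I^{(2)}_{222}$: integrating by parts as in \eqref{equ-B2} gives $I^{(2)}_{222}=\mathcal B^{(2)}+I^{(2)}_{2221}$ with $\mathcal B^{(2)}=\int\partial_x[\varepsilon/n]\,\partial_x^3\phi^{\varepsilon}_R\,\partial_t\partial_x^2\phi^{\varepsilon}_R$; I would estimate $I^{(2)}_{2221}$ precisely as in \eqref{equ67}--\eqref{equ70} and simply carry $\mathcal B^{(2)}$ along unchanged. Collecting the three $-\frac12\frac{d}{dt}$ contributions and using $\frac1n+\frac{\varepsilon\phi^{(1)}}{n}=\frac{1+\varepsilon\phi^{(1)}}{n}$ moves exactly the quantity in the statement to the left-hand side, while every remainder lumps into $C_1(1+\varepsilon^2|\!|\!|(u^{\varepsilon}_R,\phi^{\varepsilon}_R)|\!|\!|_{\varepsilon}^2)(1+|\!|\!|(u^{\varepsilon}_R,\phi^{\varepsilon}_R)|\!|\!|_{\varepsilon}^2)$ and $\mathcal B^{(2)}$ stays on the right.

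Since everything other than $\mathcal B^{(2)}$ is inherited verbatim from Proposition \ref{L4} and Lemmas \ref{Lem-u1} and \ref{Lem-u2}, there is no new analytic difficulty here; the one point requiring care is purely structural. One must keep $\mathcal B^{(2)}$ in the exact form produced by \eqref{equ-B2}, before any further integration by parts, because it is precisely $\int\partial_x[\varepsilon/n]\,\partial_x^3\phi^{\varepsilon}_R\,\partial_t\partial_x^2\phi^{\varepsilon}_R$ that, added to the leftover $\mathcal B^{(3\times\varepsilon)}$ of the $\sqrt{\varepsilon}$-weighted third-order estimate, assembles into $\int\partial_x[\varepsilon/n]\,\partial_x^3\phi^{\varepsilon}_R\,(\partial_t\partial_x^2\phi^{\varepsilon}_R-\varepsilon\,\partial_t\partial_x^4\phi^{\varepsilon}_R)$, which the Poisson equation \eqref{rem2-3} renders controllable in $|\!|\!|(u^{\varepsilon}_R,\phi^{\varepsilon}_R)|\!|\!|_{\varepsilon}$. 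Thus the whole content of the Corollary is to record the $\gamma=2$ energy identity in the form that makes this cancellation available in the next subsection.
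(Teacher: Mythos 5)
Your proposal is correct and follows the paper's own route exactly: the paper's one-line proof (``This follows from \eqref{equ58} with $\gamma=2$'') is elliptical, and the intended reading—made explicit in the text just above the Corollary—is precisely what you describe, namely re-running the $\gamma=2$ case of Proposition \ref{L4} but keeping $\mathcal B^{(2)}$ from \eqref{equ-B2} on the right-hand side rather than absorbing it through the estimate \eqref{equ38}. You also correctly identify the structural point that $\mathcal B^{(2)}$ must be carried in the exact form $\int\partial_x[\varepsilon/n]\,\partial_x^3\phi^{\varepsilon}_R\,\partial_t\partial_x^2\phi^{\varepsilon}_R$ so that it later combines with $\mathcal B^{(3\times\varepsilon)}$ into the controllable quantity $\mathcal G^{(2,\varepsilon)}$ via the Poisson equation.
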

\begin{proof}
This follows from \eqref{equ58} with $\gamma=2$.
\end{proof}
We remark that the precise form of $\mathcal B^{(2)}$ is very important for us to close the proof later. Indeed, when $\gamma=3$, the term $\mathcal B^{(3)}$ is not controllable in terms of $|\!|\!|(u^{\varepsilon}_R,\phi^{\varepsilon}_R) |\!|\!|_{\varepsilon}$. We need an exact cancellation by $\mathcal B^{(2)}+\varepsilon\mathcal B^{(3)}$ (see Remark \ref{remark} below). This is the reason why the third order derivatives are estimated separately.

\subsection{Third order estimates}
\begin{proposition}\label{L7}
Let $(n^{\varepsilon}_R,u^{\varepsilon}_R,\phi^{\varepsilon}_R)$ be a solution to \eqref{rem2}, then
\begin{equation*}
\begin{split}
\frac12\frac{d}{dt}\bigg[\varepsilon\|\partial_x^3u^{\varepsilon}_R\|^2\bigg] +&\frac12\frac{d}{dt}\bigg[\bigg(\int\frac{\varepsilon(1+\varepsilon\phi^{(1)})}{n} |\partial_x^3\phi^{\varepsilon}_R|^2 +\int\frac{\varepsilon^2}{n}|\partial_x^4\phi^{\varepsilon}_R|^2\bigg) \bigg] \\
\leq & C_1(1+\varepsilon^2|\!|\!|(u^{\varepsilon}_R,\phi^{\varepsilon}_R) |\!|\!|^2_{\varepsilon}) (1+|\!|\!|(u^{\varepsilon}_R,\phi^{\varepsilon}_R)|\!|\!|^2_{\varepsilon})+\mathcal B^{(3\times\varepsilon)},
\end{split}
\end{equation*}
where
\begin{equation*}
\begin{split}
\mathcal B^{(3\times\varepsilon)}=-\int\partial_x^3\phi^{\varepsilon}_R\partial_x\left[\frac{\varepsilon^2}{n}\right] \partial_{t}\partial_x^4\phi^{\varepsilon}_R.
\end{split}
\end{equation*}
\end{proposition}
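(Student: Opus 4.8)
The plan is to repeat, at the level of three $x$-derivatives and with the whole identity multiplied by an extra factor $\varepsilon$, the argument behind Proposition \ref{L4}; the weighting is dictated by \eqref{def-A}, so that the energy it generates sits inside $|\!|\!|(u^{\varepsilon}_R,\phi^{\varepsilon}_R)|\!|\!|^2_{\varepsilon}$. Concretely, I apply $\partial_x^3$ to \eqref{rem2-2}, pair with $\partial_x^3u^{\varepsilon}_R$ in $L^2$, and multiply by $\varepsilon$. Exactly as in \eqref{u1}, the singular transport term $-\int\partial_x^4u^{\varepsilon}_R\,\partial_x^3u^{\varepsilon}_R$ vanishes after integration by parts, leaving on the left $\tfrac12\tfrac{d}{dt}(\varepsilon\|\partial_x^3u^{\varepsilon}_R\|^2)$ plus $\varepsilon$ times the commutator terms coming from $(\tilde u+\varepsilon^2u^{\varepsilon}_R)\partial_xu^{\varepsilon}_R$, $\partial_x\tilde u\,u^{\varepsilon}_R$ and $\varepsilon\mathcal R_2$, and on the right $\varepsilon I^{(3)}:=\int\partial_x^3\phi^{\varepsilon}_R\,\partial_x^4u^{\varepsilon}_R$ (one integration by parts applied to $-\int\partial_x^4\phi^{\varepsilon}_R\,\partial_x^3u^{\varepsilon}_R$). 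After integrating the top-order derivative by parts, each commutator term is bounded by $C_1(1+\varepsilon^2|\!|\!|(u^{\varepsilon}_R,\phi^{\varepsilon}_R)|\!|\!|^2_{\varepsilon})(1+\varepsilon\|u^{\varepsilon}_R\|^2_{H^3})$, treated verbatim as in Proposition \ref{L4}, using $\varepsilon\|u^{\varepsilon}_R\|^2_{H^3}\le|\!|\!|(u^{\varepsilon}_R,\phi^{\varepsilon}_R)|\!|\!|^2_{\varepsilon}$, $\sqrt{\varepsilon}\|\partial_xu^{\varepsilon}_R\|_{L^{\infty}}\le C\sqrt{\varepsilon}\|u^{\varepsilon}_R\|_{H^2}$, and Theorem \ref{thm3} for $\partial_x^3\mathcal R_2$.

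Next I substitute $\partial_x^4u^{\varepsilon}_R/\varepsilon$ from the $\gamma=3$ instance of \eqref{equ59}, which splits $\varepsilon I^{(3)}=\sum_{i=1}^7\varepsilon I_i^{(3)}$ with $\varepsilon I_i^{(3)}=\varepsilon\int\partial_x^3\phi^{\varepsilon}_R\,A_i^{(3)}$ as in \eqref{equ60}. For $3\le i\le7$ the integrand $A_i^{(3)}$ is a product of the profiles with at most one $\partial_x^3$ of an unknown, so after distributing the $\varepsilon$ these are literally the $I_i^{(\gamma)}$ estimates of Proposition \ref{L4} with $\|\cdot\|_{H^{\gamma}}$ replaced by $\sqrt{\varepsilon}\|\partial_x^3\cdot\|$; a factor $\partial_x^{\le2}n^{\varepsilon}_R$ is handled by Lemma \ref{L1}, and a factor $\varepsilon^2\partial_x^3n^{\varepsilon}_R$ by Lemma \ref{L1} with $\alpha=3$, which yields $\varepsilon\|\partial_x^3n^{\varepsilon}_R\|^2\lesssim|\!|\!|(u^{\varepsilon}_R,\phi^{\varepsilon}_R)|\!|\!|^2_{\varepsilon}$. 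Hence $\sum_{i=3}^7\varepsilon I_i^{(3)}\le C_1(1+\varepsilon^2|\!|\!|(u^{\varepsilon}_R,\phi^{\varepsilon}_R)|\!|\!|^2_{\varepsilon})(1+|\!|\!|(u^{\varepsilon}_R,\phi^{\varepsilon}_R)|\!|\!|^2_{\varepsilon})$.

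It remains to treat $\varepsilon I_1^{(3)}$ and $\varepsilon I_2^{(3)}$, the $\gamma=3$, $\varepsilon$-weighted analogues of Lemmas \ref{Lem-u1} and \ref{Lem-u2}. For $\varepsilon I_1^{(3)}$, I substitute $\partial_x^4n^{\varepsilon}_R$ from $\partial_x^4$ of \eqref{rem2-3} and split into four pieces: the $\partial_x^4\phi^{\varepsilon}_R$ piece equals $-\tfrac12\int\partial_x[\tfrac{1-u}{n}]\,|\partial_x^3\phi^{\varepsilon}_R|^2$, admissible because $\partial_x[\tfrac{1-u}{n}]$ carries a factor $\varepsilon$ (as near \eqref{equ62}), so that only $\varepsilon\|\partial_x^3\phi^{\varepsilon}_R\|^2$ survives; the $-\varepsilon\partial_x^6\phi^{\varepsilon}_R$ piece is integrated by parts repeatedly, each step either symmetrizing $\partial_x^4\phi^{\varepsilon}_R$ against itself or moving a derivative onto $\tfrac{1-u}{n}$ to gain a power of $\varepsilon$, so that only multiples of $\varepsilon^2\|\partial_x^4\phi^{\varepsilon}_R\|^2\le\varepsilon|\!|\!|(u^{\varepsilon}_R,\phi^{\varepsilon}_R)|\!|\!|^2_{\varepsilon}$ remain; the $\varepsilon\phi^{(1)}\phi^{\varepsilon}_R$ piece is like the first, and the $\varepsilon^2\mathcal R_3$ piece is handled by Lemma \ref{L8} as in Lemma \ref{Lem-u1}. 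For $\varepsilon I_2^{(3)}$, I substitute $\partial_t\partial_x^3n^{\varepsilon}_R$ from $\partial_t\partial_x^3$ of \eqref{rem2-3} and split into $\sum_{i=1}^4\varepsilon I_{2i}^{(3)}$: $\varepsilon I_{21}^{(3)}$ produces $-\tfrac12\tfrac{d}{dt}\int\tfrac{\varepsilon}{n}|\partial_x^3\phi^{\varepsilon}_R|^2$ plus a remainder bounded via Lemma \ref{L2} and $\partial_t[\tfrac{1}{n}]\sim\varepsilon$; $\varepsilon I_{23}^{(3)}$ is of the same type and supplies the $\tfrac{\varepsilon^2\phi^{(1)}}{n}$ term of the energy; $\varepsilon I_{24}^{(3)}$ is controlled by Lemma \ref{L8} together with Lemma \ref{L3}; and $\varepsilon I_{22}^{(3)}=\varepsilon^2\int\tfrac{1}{n}\partial_x^3\phi^{\varepsilon}_R\,\partial_t\partial_x^5\phi^{\varepsilon}_R$, after one integration by parts in $x$, becomes $-\tfrac12\tfrac{d}{dt}\int\tfrac{\varepsilon^2}{n}|\partial_x^4\phi^{\varepsilon}_R|^2+\tfrac{\varepsilon^2}{2}\int\partial_t[\tfrac{1}{n}]\,|\partial_x^4\phi^{\varepsilon}_R|^2-\int\partial_x^3\phi^{\varepsilon}_R\,\partial_x[\tfrac{\varepsilon^2}{n}]\,\partial_t\partial_x^4\phi^{\varepsilon}_R$, where the $\partial_t[\tfrac{1}{n}]$ term is again controlled by Lemma \ref{L2}, and the last term is exactly $\mathcal B^{(3\times\varepsilon)}$, which I keep. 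Moving the three exact time-derivatives to the left-hand side of the proposition yields the stated inequality.

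The main obstacle is the derivative bookkeeping in $\varepsilon I_1^{(3)}$ and $\varepsilon I_2^{(3)}$: after the substitutions these integrals involve $\partial_x^6\phi^{\varepsilon}_R$ and $\partial_t\partial_x^5\phi^{\varepsilon}_R$, two orders beyond what $|\!|\!|(u^{\varepsilon}_R,\phi^{\varepsilon}_R)|\!|\!|_{\varepsilon}$ controls, so every integration by parts must be arranged so that each surviving high derivative of $\phi^{\varepsilon}_R$ either forms a perfect square weighted by $\tfrac{1}{n}$ or $\tfrac{\varepsilon}{n}$ (absorbed into the left-hand side via the $\tfrac12\tfrac{d}{dt}$ terms) or is multiplied by one of $\partial_x[\tfrac{1-u}{n}]$, $\partial_x[\tfrac{\varepsilon}{n}]$, $\partial_t[\tfrac{1}{n}]$, each of which supplies the missing power of $\varepsilon$. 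This succeeds for every term except one residue, $\mathcal B^{(3\times\varepsilon)}$, in which $\partial_t\partial_x^4\phi^{\varepsilon}_R$ is genuinely out of reach of $|\!|\!|(u^{\varepsilon}_R,\phi^{\varepsilon}_R)|\!|\!|_{\varepsilon}$; it is therefore left in the estimate here and cancelled in the next subsection against $\mathcal B^{(2)}$ from Corollary \ref{Cor} by means of the Poisson equation \eqref{rem2-3}, as foreshadowed in the paragraph following Corollary \ref{Cor}.
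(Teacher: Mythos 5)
Your overall structure matches the paper's proof of Proposition \ref{L7}: pair $\partial_x^3$ of \eqref{rem2-2} with $\varepsilon\partial_x^3 u^{\varepsilon}_R$, substitute $\partial_x^4 u^{\varepsilon}_R/\varepsilon$ from $\partial_x^3$ of \eqref{rem2-1}, treat the resulting $I^{(3\times\varepsilon)}_i$ via the Poisson equation, and isolate $\mathcal B^{(3\times\varepsilon)}$ from the $\partial_t\partial_x^5\phi^{\varepsilon}_R$ piece. The treatment of $I^{(3\times\varepsilon)}_1$ and $I^{(3\times\varepsilon)}_2$ is essentially the paper's Lemmas \ref{L16} and \ref{L17}.

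There is, however, a genuine gap in your handling of the terms $I_i^{(3\times\varepsilon)}$ for $i=3,\dots,7$. You claim that a factor $\varepsilon^2\partial_x^3 n^{\varepsilon}_R$ is handled ``by Lemma \ref{L1} with $\alpha=3$, which yields $\varepsilon\|\partial_x^3 n^{\varepsilon}_R\|^2\lesssim|\!|\!|(u^{\varepsilon}_R,\phi^{\varepsilon}_R)|\!|\!|^2_{\varepsilon}$.'' That bound is not available. Lemma \ref{L1} with $\alpha=3$ gives
\begin{equation*}
\varepsilon\|\partial_x^3 n^{\varepsilon}_R\|^2\le C_1\bigl(\varepsilon\|\partial_x^3\phi^{\varepsilon}_R\|^2+\varepsilon^2\|\partial_x^4\phi^{\varepsilon}_R\|^2+\varepsilon^3\|\partial_x^5\phi^{\varepsilon}_R\|^2\bigr),
\end{equation*}
and the last term $\varepsilon^3\|\partial_x^5\phi^{\varepsilon}_R\|^2$ lies strictly beyond $|\!|\!|(u^{\varepsilon}_R,\phi^{\varepsilon}_R)|\!|\!|^2_{\varepsilon}$, whose highest-order $\phi$-entry is $\varepsilon^2\|\partial_x^4\phi^{\varepsilon}_R\|^2$. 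The paper in fact flags exactly this in the remark after Lemma \ref{L1}: only $\|n^{\varepsilon}_R\|_{H^2}$ can be recovered from the triple norm, and no higher derivatives of $n^{\varepsilon}_R$ are permitted. Consequently, any term in $\sum_{i=3}^7 I_i^{(3\times\varepsilon)}$ that carries $\partial_x^3 n^{\varepsilon}_R$ (the $\beta=3$ case of $I_3^{(3\times\varepsilon)}$, and the corresponding cases in $I_4^{(3\times\varepsilon)}$, $I_6^{(3\times\varepsilon)}$) cannot be bounded by a direct appeal to Lemma \ref{L1}. The paper's fix is to integrate by parts once more to move that derivative off $n^{\varepsilon}_R$, producing terms like $\int\frac{\varepsilon^3}{n}\partial_x^4\phi^{\varepsilon}_R\,\partial_x^2 n^{\varepsilon}_R\,\partial_x u^{\varepsilon}_R$, which do close via Lemma \ref{L1} with $\alpha\le 2$. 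Your proposal is missing this integration by parts, and without it the $\sum_{i=3}^7$ contribution is not controlled.

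The remaining pieces of your argument ($I_1^{(3\times\varepsilon)}$, $I_2^{(3\times\varepsilon)}$, the LHS commutator terms, and the identification of $\mathcal B^{(3\times\varepsilon)}$) track the paper correctly.
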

\begin{proof}
Taking $\partial_x^3$ of \eqref{rem2-2} and then taking inner product with $\varepsilon\partial_x^3u^{\varepsilon}_R$, we have
\begin{equation}\label{3rd}
\begin{split}
\frac12\frac{d}{dt}(\varepsilon\|\partial_{x}^3u^{\varepsilon}_R\|^2)  -&\int\partial_{x}^4u^{\varepsilon}_R\partial_{x}^3u^{\varepsilon}_R +\int\varepsilon\partial_{x}^3\left[(\tilde u+\varepsilon u^{\varepsilon}_R)\partial_{x}u^{\varepsilon}_R\right]\partial_{x}^3u^{\varepsilon}_R\\ &+\int\varepsilon\partial_{x}^3\left[\partial_x\tilde uu^{\varepsilon}_R\right]\partial_{x}^3u^{\varepsilon}_R +\int\partial_{x}^3\left[\varepsilon^2\mathcal R_2\right]\partial_{x}^3u^{\varepsilon}_R \\ =&\int\varepsilon\partial_x^3\phi^{\varepsilon}_R \frac{\partial_{x}^4u^{\varepsilon}_R}{\varepsilon}
=:I^{(3\times\varepsilon)}.
\end{split}
\end{equation}

\emph{Estimate of LHS of \eqref{3rd}.} The second term on the LHS of \eqref{3rd} vanishes by integration by parts. For the third term, by expanding the derivatives and then integration by parts, we have
\begin{equation}\label{equ72}
\begin{split}
&\int\varepsilon\partial_{x}^3\left[(\tilde u+\varepsilon^2 u^{\varepsilon}_R)\partial_{x}u^{\varepsilon}_R\right]\partial_{x}^3u^{\varepsilon}_R\\ =&\frac52\int\varepsilon\partial_x(\tilde u+\varepsilon^2 u^{\varepsilon}_R)|\partial_{x}^3u^{\varepsilon}_R|^2 +\sum_{\beta=2,3}C_{3}^{\beta}\int\varepsilon\partial_{x}^\beta\left[(\tilde u+\varepsilon^2u^{\varepsilon}_R)\right]\partial_{x}^{4-\beta}u^{\varepsilon}_R \partial_{x}^3u^{\varepsilon}_R\\
=&:L_{31}^{(3\times\varepsilon)}+L_{32}^{(3\times\varepsilon)}.
\end{split}
\end{equation}
The first term $L_{31}^{(3\times\varepsilon)}$ on the RHS of \eqref{equ72} is estimated as
\begin{equation}\label{equ73}
\begin{split}
L_{31}^{(3\times\varepsilon)} \leq & C(1+\varepsilon\|\partial_xu^{\varepsilon}_R\|_{L^{\infty}}) (\varepsilon\|\partial_{x}^3u^{\varepsilon}_R\|^2)\\
\leq & C(1+\varepsilon|\!|\!|(u^{\varepsilon}_R,\phi^{\varepsilon}_R) |\!|\!|_{\varepsilon}) (\varepsilon\|\partial_{x}^3u^{\varepsilon}_R\|^2).
\end{split}
\end{equation}
When $\beta=3$, the second term $L_{32}^{(3\times\varepsilon)}$ on the RHS of \eqref{equ72} is estimated similarly to \eqref{equ73}. When $\beta=2$, the second term $L_{32}^{(3\times\varepsilon)}$ is estimated
\begin{equation*}
\begin{split}
L_{32}^{(3\times\varepsilon)}=&\int\varepsilon(\partial_{x}^2\tilde u+\varepsilon^2\partial_{x}^2u^{\varepsilon}_R)\partial_{x}^2u^{\varepsilon}_R \partial_{x}^3u^{\varepsilon}_R\\
\leq & C(1+\varepsilon^2\|\partial_x^2u^{\varepsilon}_R\|_{L^{\infty}}) (\varepsilon\|\partial_x^2u^{\varepsilon}_R\|\|\partial_{x}^3u^{\varepsilon}_R\|)\\
\leq & C(1+\varepsilon|\!|\!|(u^{\varepsilon}_R,\phi^{\varepsilon}_R) |\!|\!|_{\varepsilon})(\varepsilon\|u^{\varepsilon}_R\|_{H^3}^2).
\end{split}
\end{equation*}
By Lemma \ref{L8}, the last two terms on the LHS of \eqref{3rd} are easily bounded by
\begin{equation*}
\begin{split}
\varepsilon(1+\|u^{\varepsilon}_R\|_{H^3}^2).
\end{split}
\end{equation*}
Hence, the last four terms on the LHS of \eqref{3rd} are bounded by
\begin{equation*}
\begin{split}
C(1+\varepsilon|\!|\!|(u^{\varepsilon}_R,\phi^{\varepsilon}_R) |\!|\!|_{\varepsilon})(1+\varepsilon\|u^{\varepsilon}_R\|_{H^3}^2).
\end{split}
\end{equation*}

\emph{Decomposition of $I^{(3\times\varepsilon)}$ in \eqref{3rd}.} Taking $\partial_x^{3}$ of \eqref{rem2-1}, we have
\begin{equation}\label{equ74}
\begin{split}
\frac{\partial_{x}^4u^{\varepsilon}_R}{\varepsilon} =&\frac1n\bigg[\frac{(1-u)}{\varepsilon}\partial_{x}^4n^{\varepsilon}_R -\partial_{t}\partial_x^3n^{\varepsilon}_R -\sum_{\beta=1}^{3}C_{3}^{\beta}\partial_x^{\beta}(\tilde n+\varepsilon^2 n^{\varepsilon}_R)\partial_x^{4-\beta}u^{\varepsilon}_R\\ &-\sum_{\beta=1}^{3}C_{3}^{\beta}\partial_x^{\beta}(\tilde u+\varepsilon^2 u^{\varepsilon}_R)\partial_x^{4-\beta}n^{\varepsilon}_R -\sum_{\beta=0}^{3}C_{3}^{\beta}\partial_x^{\beta}u^{\varepsilon}_R \partial_x^{4-\beta}\tilde n\\ &-\sum_{\beta=0}^{3}C_{3}^{\beta}\partial_x^{\beta}n^{\varepsilon}_R \partial_x^{4-\beta}\tilde u -\varepsilon\partial_x^3\mathcal R_1\bigg]=\sum_{i=1}^7A_i^{(3)}.
\end{split}
\end{equation}
Accordingly $I^{(3\times\varepsilon)}$ is decomposed into
\begin{equation}\label{equ76}
\begin{split}
I^{(3\times\varepsilon)}=\sum_{i=1}^7\int\varepsilon\partial_x^3\phi^{\varepsilon}_R A_i^{(3)}=\sum_{i=1}^7I^{(3\times\varepsilon)}_i.
\end{split}
\end{equation}

\emph{Estimate of $I^{(3\times\varepsilon)}_i$ for $3\leq i\leq 7$.} By a direct computation, $I^{(3\times\varepsilon)}_3$ takes the form
\begin{equation}\label{equ75}
\begin{split}
I^{(3\times\varepsilon)}_3=&-\sum_{\beta=1}^{3}C_{3}^{\beta}\int\frac{\varepsilon}{n} \partial_x^3\phi^{\varepsilon}_R\partial_x^{\beta}\tilde n\partial_x^{4-\beta}u^{\varepsilon}_R -\sum_{\beta=1}^3C_{3}^{\beta}\int\frac{\varepsilon^3}{n} \partial_x^3\phi^{\varepsilon}_R\partial_x^{\beta}n^{\varepsilon}_R \partial_x^{4-\beta}u^{\varepsilon}_R.
\end{split}
\end{equation}
The first term on the RHS is bilinear in $(n^{\varepsilon}_R,u^{\varepsilon}_R)$ and is bounded by
\begin{equation*}
\begin{split}
C\varepsilon\|\partial_x^3\phi^{\varepsilon}_R\|^2 +C(\|u^{\varepsilon}_R\|_{H^2}^2 +\varepsilon\|\partial_x^{3}u^{\varepsilon}_R\|^2).
\end{split}
\end{equation*}
For the second term on the RHS of \eqref{equ75}, when $\beta=1,2$, it is bounded by Lemma \ref{L1}
\begin{equation*}
\begin{split}
\int\frac{\varepsilon^3}{n} \partial_x^3\phi^{\varepsilon}_R\partial_x^{\beta}n^{\varepsilon}_R \partial_x^{4-\beta}u^{\varepsilon}_R \leq & C\varepsilon^2\|\partial_x^{\beta}n^{\varepsilon}_R\|^2 (\varepsilon^2\|\partial_x^3\phi^{\varepsilon}_R\|_{L^{\infty}}^2) +C\varepsilon^2\|\partial_x^{4-\beta}u^{\varepsilon}_R\|^2\\
\leq & C_1(1+\varepsilon^2|\!|\!|(u^{\varepsilon}_R,\phi^{\varepsilon}_R) |\!|\!|^2_{\varepsilon})(\varepsilon\|u^{\varepsilon}_R\|_{H^3}^2 +\varepsilon^2|\phi^{\varepsilon}_R|_{H^4}^2).
\end{split}
\end{equation*}
When $\beta=3$, by integration by parts, $H^1\hookrightarrow L^{\infty}$ and Lemma \ref{L2},
\begin{equation*}
\begin{split}
-\int&\frac{\varepsilon^3}{n} \partial_x^3\phi^{\varepsilon}_R\partial_x^{3}n^{\varepsilon}_R \partial_xu^{\varepsilon}_R =\int\frac{\varepsilon^3}{n}\partial_x^4\phi^{\varepsilon}_R \partial_x^2n^{\varepsilon}_R\partial_xu^{\varepsilon}_R\\
&+\int\frac{\varepsilon^3}{n}\partial_x^3\phi^{\varepsilon}_R \partial_x^2n^{\varepsilon}_R\partial_x^2u^{\varepsilon}_R +\int\partial_x\left[\frac{\varepsilon^3}{n}\right]\partial_x^3\phi^{\varepsilon}_R \partial_x^2n^{\varepsilon}_R\partial_xu^{\varepsilon}_R\\
\leq & C(\varepsilon\|\partial_x^3\phi^{\varepsilon}_R \|_{H^1}) (\varepsilon\|\partial_x^2n^{\varepsilon}_R\|) (\varepsilon\|\partial_xu^{\varepsilon}_R\|_{H^{1,\infty}}) (1+\varepsilon^3\|\partial_xn^{\varepsilon}_R\|_{L^{\infty}})\\
\leq & C_1(1+\varepsilon^2|\!|\!|(u^{\varepsilon}_R,\phi^{\varepsilon}_R) |\!|\!|^2_{\varepsilon})(\varepsilon\|\partial_xu^{\varepsilon}_R\|_{H^{2}}^2 +\varepsilon^2\|\phi^{\varepsilon}_R \|_{H^4}^2).
\end{split}
\end{equation*}
This completes the estimate of $I^{(3\times\varepsilon)}_{3}$. The terms $I^{(3\times\varepsilon)}_{i}$ for $i=4,5,6,7$ can be bounded similarly with the same bound.

In summary, we have
\begin{equation*}
\begin{split}
\sum_{i=3}^7I^{(3\times\varepsilon)}_{i}\leq  C_1(1+\varepsilon^2|\!|\!|(u^{\varepsilon}_R,\phi^{\varepsilon}_R) |\!|\!|^2_{\varepsilon})(1+|\!|\!|(u^{\varepsilon}_R,\phi^{\varepsilon}_R) |\!|\!|^2_{\varepsilon}).
\end{split}
\end{equation*}

Proposition \ref{L7} then follows from the following Lemma \ref{L16}, Lemma \ref{L17} and Proposition \ref{PB}.
\end{proof}

\begin{lemma}[\textbf{\emph{{Estimate for $I^{(3\times\varepsilon)}_1$}}}]\label{L16} Let $(n^{\varepsilon}_R,u^{\varepsilon}_R,\phi^{\varepsilon}_R)$ be a solution to \eqref{rem2}, then
\begin{equation*}
\begin{split}
I^{(3\times\varepsilon)}_1 \leq C_1(1+\varepsilon^2|\!|\!|(u^{\varepsilon}_R,\phi^{\varepsilon}_R) |\!|\!|_{\varepsilon}^2)(1+|\!|\!|(u^{\varepsilon}_R,\phi^{\varepsilon}_R) |\!|\!|_{\varepsilon}^2).
\end{split}
\end{equation*}
\end{lemma}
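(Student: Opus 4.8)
The plan is to treat $I^{(3\times\varepsilon)}_1$ in exact parallel with the estimate of $I^{(\gamma)}_1$ in Lemma \ref{Lem-u1}, the genuinely new point being the bookkeeping of the extra power of $\varepsilon$ that the third-order energy carries, and the fact that $\partial_x^4 n^{\varepsilon}_R$ appears, which Lemma \ref{L1} does not control directly. By \eqref{equ74} and \eqref{equ76},
\[
I^{(3\times\varepsilon)}_1=\int\varepsilon\,\partial_x^3\phi^{\varepsilon}_R\,A_1^{(3)}
=\int\frac{1-u}{n}\,\partial_x^3\phi^{\varepsilon}_R\,\partial_x^4n^{\varepsilon}_R ,
\]
i.e. $I^{(3\times\varepsilon)}_1$ is $\varepsilon$ times the $\gamma=3$ version of $I^{(\gamma)}_1$. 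Since Lemma \ref{L1} bounds only $\|n^{\varepsilon}_R\|_{H^2}$ (and no higher derivative of $n^{\varepsilon}_R$) by $|\!|\!|(u^{\varepsilon}_R,\phi^{\varepsilon}_R)|\!|\!|_{\varepsilon}$, I would first remove $\partial_x^4n^{\varepsilon}_R$ by applying $\partial_x^4$ to the Poisson equation \eqref{rem2-3}:
\[
\partial_x^4n^{\varepsilon}_R=\partial_x^4\phi^{\varepsilon}_R-\varepsilon\partial_x^6\phi^{\varepsilon}_R
+\varepsilon\partial_x^4(\phi^{(1)}\phi^{\varepsilon}_R)+\varepsilon^2\partial_x^4\mathcal R_3=:\sum_{i=1}^4B_i^{(3)},
\]
so that $I^{(3\times\varepsilon)}_1=\sum_{i=1}^4\int\frac{1-u}{n}\,\partial_x^3\phi^{\varepsilon}_R\,B_i^{(3)}=:\sum_{i=1}^4 I^{(3\times\varepsilon)}_{1i}$.

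The three ``mild'' pieces are dispatched essentially as in Lemma \ref{Lem-u1}. For $I^{(3\times\varepsilon)}_{11}$ I integrate by parts once, $I^{(3\times\varepsilon)}_{11}=-\tfrac12\int\partial_x\!\left(\tfrac{1-u}{n}\right)|\partial_x^3\phi^{\varepsilon}_R|^2$, and use that $\partial_x\!\left(\tfrac{1-u}{n}\right)$ is $O(\varepsilon)$ from the smooth profiles $\tilde n,\tilde u$ plus $O(\varepsilon^3)(|\partial_xn^{\varepsilon}_R|+|\partial_xu^{\varepsilon}_R|)$; the Sobolev embedding $H^1\hookrightarrow L^\infty$ and Lemma \ref{L1} then control everything against the weight $\varepsilon\|\partial_x^3\phi^{\varepsilon}_R\|^2$ present in $|\!|\!|(u^{\varepsilon}_R,\phi^{\varepsilon}_R)|\!|\!|_{\varepsilon}$. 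The term $I^{(3\times\varepsilon)}_{13}$ is handled the same way, the extra $x$-derivatives falling onto the known, $L^\infty$-bounded profile $\phi^{(1)}$. For $I^{(3\times\varepsilon)}_{14}=\varepsilon^2\int\frac{1-u}{n}\,\partial_x^3\phi^{\varepsilon}_R\,\partial_x^4\mathcal R_3$ I integrate by parts once and use that $\mathcal R_3$ involves no derivative of $\phi^{\varepsilon}_R$, so $\|\partial_x^k\mathcal R_3\|\le C_1\|\phi^{\varepsilon}_R\|_{H^k}$ (cf. Lemma \ref{L8}); the prefactor $\varepsilon^2$ makes the resulting terms strictly subcritical after Young's inequality.

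The one term that really uses the structure is $I^{(3\times\varepsilon)}_{12}=-\varepsilon\int\frac{1-u}{n}\,\partial_x^3\phi^{\varepsilon}_R\,\partial_x^6\phi^{\varepsilon}_R$, which features $\partial_x^6\phi^{\varepsilon}_R$, far above what $|\!|\!|\cdot|\!|\!|_{\varepsilon}$ bounds. Exactly as for $I^{(\gamma)}_{12}$ in Lemma \ref{Lem-u1}, I integrate by parts twice to obtain
\[
I^{(3\times\varepsilon)}_{12}=-\frac32\,\varepsilon\!\int\partial_x\!\left(\frac{1-u}{n}\right)|\partial_x^4\phi^{\varepsilon}_R|^2
-\varepsilon\!\int\partial_x^2\!\left(\frac{1-u}{n}\right)\partial_x^3\phi^{\varepsilon}_R\,\partial_x^4\phi^{\varepsilon}_R ,
\]
so only $\partial_x^3\phi^{\varepsilon}_R$ and $\partial_x^4\phi^{\varepsilon}_R$ remain, weighted by $\varepsilon$ and $\varepsilon^2$ in $|\!|\!|\cdot|\!|\!|_{\varepsilon}$. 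Inserting the pointwise bounds for $\partial_x\!\left(\tfrac{1-u}{n}\right)$ and $\partial_x^2\!\left(\tfrac{1-u}{n}\right)$ recorded in Lemma \ref{Lem-u1}, and then using Cauchy--Schwarz, the Gagliardo--Nirenberg inequality for $\|\partial_x^3\phi^{\varepsilon}_R\|_{L^\infty}$, Lemma \ref{L1} for $\|\partial_x^2n^{\varepsilon}_R\|$, and Young's inequality, each resulting term comes with a surplus factor of $\varepsilon$ and is bounded by $C_1(1+\varepsilon^2|\!|\!|(u^{\varepsilon}_R,\phi^{\varepsilon}_R)|\!|\!|^2_{\varepsilon})(1+|\!|\!|(u^{\varepsilon}_R,\phi^{\varepsilon}_R)|\!|\!|^2_{\varepsilon})$. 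Summing $I^{(3\times\varepsilon)}_{11}$ through $I^{(3\times\varepsilon)}_{14}$ yields the claim. The main obstacle is exactly this $\varepsilon$-counting: one must check that after the two integrations by parts no factor carries more than four $x$-derivatives on $\phi^{\varepsilon}_R$ or more than two on $n^{\varepsilon}_R$, and that the $\varepsilon$ in front of $I^{(3\times\varepsilon)}$ (reflecting that it is the $\varepsilon$-weighted third-order energy being differentiated), together with the $\varepsilon,\varepsilon^2$ weights built into $|\!|\!|\cdot|\!|\!|_{\varepsilon}$ and the extra $\varepsilon$ in $B_2^{(3)}=-\varepsilon\partial_x^6\phi^{\varepsilon}_R$, leaves a genuine surplus power of $\varepsilon$ in every term. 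Note that no uncontrollable term of the type $\mathcal B^{(3\times\varepsilon)}$ appears in this lemma; that term originates from $I^{(3\times\varepsilon)}_2$, not from $I^{(3\times\varepsilon)}_1$.
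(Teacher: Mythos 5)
Your proposal matches the paper's proof essentially line for line: the same decomposition of $\partial_x^4 n^{\varepsilon}_R$ via the Poisson equation \eqref{rem2-3}, the same splitting $I^{(3\times\varepsilon)}_1=\sum_{i=1}^4 I^{(3\times\varepsilon)}_{1i}$, the same integration by parts (once for $I^{(3\times\varepsilon)}_{11}$, $I^{(3\times\varepsilon)}_{13}$, $I^{(3\times\varepsilon)}_{14}$, twice for $I^{(3\times\varepsilon)}_{12}$), and the same use of the pointwise bounds on $\partial_x[(1-u)/n]$ and $\partial_x^2[(1-u)/n]$, Lemma \ref{L1}, Lemma \ref{L8}, and Sobolev embedding. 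The $\varepsilon$-bookkeeping and the observation that $\mathcal B^{(3\times\varepsilon)}$ originates from $I^{(3\times\varepsilon)}_2$ rather than $I^{(3\times\varepsilon)}_1$ are both correct.
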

\begin{proof}
Recall from \eqref{equ76},
\begin{equation*}
\begin{split}
I^{(3\times\varepsilon)}_1=\int\partial_{x}^3\phi^{\varepsilon}_R \left[\frac{(1-u)}{n}\partial_{x}^4n^{\varepsilon}_R\right].
\end{split}
\end{equation*}
Taking $\partial_x^4$ of \eqref{rem2-3}, we have
\begin{equation*}
\begin{split}
\partial_{x}^4n^{\varepsilon}_R=\partial_{x}^4\left\{\phi^{\varepsilon}_R -\varepsilon\partial_x^2\phi^{\varepsilon}_R +\varepsilon(\phi^{(1)}\phi^{\varepsilon}_R) +\varepsilon^2\mathcal R_3\right\}.
\end{split}
\end{equation*}
Accordingly, we split $I^{(3\times\varepsilon)}_1$ as
\begin{equation}\label{equ77}
\begin{split}
I^{(3\times\varepsilon)}_1=\int\partial_{x}^3\phi^{\varepsilon}_R \left[\frac{(1-u)}{n}\partial_{x}^4n^{\varepsilon}_R\right] =\sum_{i=1}^4I^{(3\times\varepsilon)}_{1i}.
\end{split}
\end{equation}

\emph{Estimate of $I^{(3\times\varepsilon)}_{11}$ in \eqref{equ77}.} By integration by parts, we have
\begin{equation}\label{equ78}
\begin{split}
I^{(3\times\varepsilon)}_{11}=&\int\partial_{x}^3\phi^{\varepsilon}_R  \left[\frac{(1-u)}{n}\partial_{x}^4\phi^{\varepsilon}_R\right]\\ =&-\int\partial_{x}\left[\frac{(1-u)}{n}\right] \|\partial_{x}^3\phi^{\varepsilon}_R\|^2\\
\leq & C\varepsilon\|\partial_{x}^3\phi^{\varepsilon}_R\|^2 +C\varepsilon^2(\|\partial_xn^{\varepsilon}_R\|_{L^{\infty}} +\|\partial_xu^{\varepsilon}_R\|_{L^{\infty}}) (\varepsilon\|\partial_{x}^3\phi^{\varepsilon}_R\|^2)\\
\leq & C_1(1+\varepsilon^2|\!|\!|(u^{\varepsilon}_R,\phi^{\varepsilon}_R) |\!|\!|^2_{\varepsilon})(\varepsilon\|\partial_{x}^3\phi^{\varepsilon}_R\|^2).
\end{split}
\end{equation}

\emph{Estimate of $I^{(3\times\varepsilon)}_{12}$ in \eqref{equ77}.} By integration by parts twice, we have
\begin{equation}\label{equ79}
\begin{split}
I^{(3\times\varepsilon)}_{12}=&-\int\varepsilon\partial_{x}^3\phi^{\varepsilon}_R  \left[\frac{(1-u)}{n}\partial_{x}^6\phi^{\varepsilon}_R\right]\\
=&-\frac32\int\varepsilon\partial_x \left[\frac{(1-u)}{n}\right]|\partial_{x}^4\phi^{\varepsilon}_R|^2 -\int\varepsilon\partial_{x}^3\phi^{\varepsilon}_R  \partial_{x}^2\left[\frac{(1-u)}{n}\right]\partial_{x}^4\phi^{\varepsilon}_R\\ =&:I^{(3\times\varepsilon)}_{121}+I^{(3\times\varepsilon)}_{122}.
\end{split}
\end{equation}
For the first term $I^{(3\times\varepsilon)}_{121}$, since
\begin{equation*}
\begin{split}
\partial_x(\frac{(1-u)}{n})\leq C\varepsilon+C\varepsilon^3(|\partial_xu^{\varepsilon}_R| +\|\partial_xn^{\varepsilon}_R\|_{L^{\infty}}),
\end{split}
\end{equation*}
by H\"older inequality, Sobolev embedding and Lemma \ref{L2}, we deduce
\begin{equation}\label{equ84}
\begin{split}
I^{(3\times\varepsilon)}_{121} \leq & C\varepsilon^2\|\partial_{x}^4\phi^{\varepsilon}_R\|^2 +C\varepsilon^2(\|\partial_xu^{\varepsilon}_R\|_{L^{\infty}} +\|\partial_xn^{\varepsilon}_R\|_{L^{\infty}}) (\varepsilon^2\|\partial_{x}^4\phi^{\varepsilon}_R\|^2)\\
\leq & C(1+\varepsilon^2|\!|\!|(u^{\varepsilon}_R,\phi^{\varepsilon}_R) |\!|\!|^2_{\varepsilon})(\varepsilon^2\|\partial_{x}^4\phi^{\varepsilon}_R\|^2).
\end{split}
\end{equation}
We note that
\begin{equation}\label{equ80}
\begin{split}
\left|\partial_x^2\left[\frac{(1-u)}{n}\right]\right|\leq & C\Big(\varepsilon +\varepsilon^3(|\partial_x^2n^{\varepsilon}_R|+|\partial_x^2u^{\varepsilon}_R|)\\
& +\varepsilon^4(|\partial_xn^{\varepsilon}_R|+|\partial_xu^{\varepsilon}_R|) +\varepsilon^6(|\partial_xn^{\varepsilon}_R|^2+|\partial_xu^{\varepsilon}_R|^2)\Big).
\end{split}
\end{equation}
To estimate $I^{(3\times\varepsilon)}_{122}$ in \eqref{equ79}, we first observe
\begin{equation}\label{equ81}
\begin{split}
\int\varepsilon^2|\partial_{x}^3\phi^{\varepsilon}_R| |\partial_{x}^4\phi^{\varepsilon}_R|\leq C(\varepsilon\|\partial_{x}^3\phi^{\varepsilon}_R\|^2 +\varepsilon^2\|\partial_{x}^4\phi^{\varepsilon}_R\|^2).
\end{split}
\end{equation}
Secondly, by Sobolev embedding, and Lemma \ref{L1}
\begin{equation}\label{equ82}
\begin{split}
\int\varepsilon^4\partial_{x}^3&\phi^{\varepsilon}_R (|\partial_x^2u^{\varepsilon}_R|+|\partial_x^2n^{\varepsilon}_R|) \partial_{x}^4\phi^{\varepsilon}_R\\
\leq & C\varepsilon^2 (\|\partial_x^2u^{\varepsilon}_R\|+\|\partial_x^2n^{\varepsilon}_R\|) (\varepsilon\|\partial_{x}^3\phi^{\varepsilon}_R\|_{L^{\infty}}) (\varepsilon\|\partial_{x}^4\phi^{\varepsilon}_R\|)\\
\leq & C_1(1+\varepsilon|\!|\!|(u^{\varepsilon}_R,\phi^{\varepsilon}_R) |\!|\!|_{\varepsilon})(\varepsilon^2\|\phi^{\varepsilon}_R\|_{H^4}^2).
\end{split}
\end{equation}
Furthermore, by Sobolev embedding, and Lemma \ref{L1}
\begin{equation*}
\begin{split}
\|\partial_xn^{\varepsilon}_R\|_{L^{\infty}}^2 +\|\partial_xu^{\varepsilon}_R\|_{L^{\infty}}^2\leq & C(\|\partial_xn^{\varepsilon}_R\|_{H^1}^2+\|\partial_xu^{\varepsilon}_R\|_{H^1}^2)\\
\leq & C_1(1+|\!|\!|(u^{\varepsilon}_R,\phi^{\varepsilon}_R) |\!|\!|_{\varepsilon}^2),
\end{split}
\end{equation*}
it is easy to bound
\begin{equation}\label{equ83}
\begin{split}
\int\varepsilon^5\partial_{x}^3\phi^{\varepsilon}_R & \big((|\partial_xn^{\varepsilon}_R|+|\partial_xu^{\varepsilon}_R|) +\varepsilon^2(|\partial_xn^{\varepsilon}_R|^2+|\partial_xu^{\varepsilon}_R|^2)\big) \partial_{x}^4\phi^{\varepsilon}_R\\
\leq & C_1(1+\varepsilon^2|\!|\!|(u^{\varepsilon}_R,\phi^{\varepsilon}_R) |\!|\!|_{\varepsilon}^2)(\varepsilon^2\|\phi^{\varepsilon}_R\|_{H^4}^2).
\end{split}
\end{equation}
By \eqref{equ80}-\eqref{equ83}, the term $I^{(3\times\varepsilon)}_{122}$ in \eqref{equ79} is bounded by
\begin{equation}\label{equ85}
\begin{split}
I^{(3\times\varepsilon)}_{12} \leq  & C_1(1+\varepsilon^2|\!|\!|(u^{\varepsilon}_R,\phi^{\varepsilon}_R) |\!|\!|_{\varepsilon}^2)(1+\varepsilon\|\phi^{\varepsilon}_R\|_{H^3}^2 +\varepsilon^2\|\phi^{\varepsilon}_R\|_{H^4}^2).
\end{split}
\end{equation}
By \eqref{equ84} and \eqref{equ85}, we have
\begin{equation*}
\begin{split}
I^{(3\times\varepsilon)}_{12} \leq  & C_1(1+\varepsilon^2|\!|\!|(u^{\varepsilon}_R,\phi^{\varepsilon}_R) |\!|\!|_{\varepsilon}^2)(1+|\!|\!|(u^{\varepsilon}_R,\phi^{\varepsilon}_R) |\!|\!|_{\varepsilon}^2).
\end{split}
\end{equation*}

\emph{Estimate of $I^{(3\times\varepsilon)}_{13}$ in \eqref{equ77}.} It is bounded similarly to $I^{(3\times\varepsilon)}_{11}$ in \eqref{equ79},
\begin{equation*}
\begin{split}
I^{(3\times\varepsilon)}_{13}
\leq & C_1(1+\varepsilon^2|\!|\!|(u^{\varepsilon}_R,\phi^{\varepsilon}_R) |\!|\!|^2_{\varepsilon})(\|\phi^{\varepsilon}_R\|_{H^2}^2+\varepsilon\|\partial_{x}^3\phi^{\varepsilon}_R\|^2).
\end{split}
\end{equation*}

\emph{Estimate of $I^{(3\times\varepsilon)}_{14}$ in \eqref{equ77}.} Recall that $|1-u|/n$ is uniformly bounded and from \eqref{expan},
\begin{equation*}
\begin{split}
\left|\partial_x[\frac{(1-u)}{n}]\right| \leq C\varepsilon(1+\varepsilon^2(\|\partial_xn^{\varepsilon}_R\|_{L^{\infty}} +\|\partial_xu^{\varepsilon}_R\|_{L^{\infty}})).
\end{split}
\end{equation*}
By using Lemma \ref{L8}, we then have
\begin{equation*}
\begin{split}
I^{(3\times\varepsilon)}_{14}=&\int\partial_{x}^3\phi^{\varepsilon}_R  \left[\frac{\varepsilon^2(1-u)}{n}\partial_{x}^4\mathcal R_3\right]\\
=&-\int\partial_{x}^4\phi^{\varepsilon}_R \left[\frac{\varepsilon^2(1-u)}{n}\right]\partial_{x}^3\mathcal R_3 -\int\partial_{x}^3\phi^{\varepsilon}_R \partial_x\left[\frac{\varepsilon^2(1-u)}{n}\right]\partial_{x}^3\mathcal R_3\\
\leq & C(\|\phi^{(i)}\|_{H^{\tilde s_i}},\varepsilon\|\phi^{\varepsilon}_R\|_{H^3}) (\varepsilon^2\|\phi^{\varepsilon}_R\|_{H^3}^2)\\ &+C(1+\varepsilon^2\|(\partial_xn^{\varepsilon}_R, \partial_xu^{\varepsilon}_R)\|_{L^{\infty}}^2) (\varepsilon\|\partial_x^3\phi^{\varepsilon}_R\|^2 +\varepsilon^2\|\partial_x^4\phi^{\varepsilon}_R\|^2)\\
\leq & C_1(1+\varepsilon^2|\!|\!|(u^{\varepsilon}_R,\phi^{\varepsilon}_R) |\!|\!|_{\varepsilon}^2)(1+|\!|\!|(u^{\varepsilon}_R,\phi^{\varepsilon}_R) |\!|\!|_{\varepsilon}^2).
\end{split}
\end{equation*}
By combining the estimates for $I^{(3\times\varepsilon)}_{1i}(1\leq i\leq 4)$  together, we complete the proof of Lemma \ref{L16}.
\end{proof}

\begin{lemma}[\textbf{\emph{{Estimate for $I^{(3\times\varepsilon)}_2$}}}]\label{L17}
Let $(n^{\varepsilon}_R,u^{\varepsilon}_R,\phi^{\varepsilon}_R)$ be a solution to \eqref{rem2}, then
\begin{equation*}
\begin{split}
I^{(3\times\varepsilon)}_2\leq &-\frac12\frac{d}{dt}\bigg[\bigg(\int\frac{\varepsilon(1+\varepsilon\phi^{(1)})}{n} |\partial_x^3\phi^{\varepsilon}_R|^2 +\int\frac{\varepsilon^2}{n}|\partial_x^4\phi^{\varepsilon}_R|^2\bigg) \bigg] \\
\leq & C_1(1+\varepsilon^2|\!|\!|(u^{\varepsilon}_R,\phi^{\varepsilon}_R) |\!|\!|_{\varepsilon}^2)(1+|\!|\!|(u^{\varepsilon}_R,\phi^{\varepsilon}_R) |\!|\!|^2_{\varepsilon})+\mathcal B^{(3\times\varepsilon)},
\end{split}
\end{equation*}
where
\begin{equation*}
\begin{split}
\mathcal B^{(3\times\varepsilon)}=-\int\partial_x^3\phi^{\varepsilon}_R\partial_x\left[\frac{\varepsilon^2}{n}\right] \partial_{t}\partial_x^4\phi^{\varepsilon}_R.
\end{split}
\end{equation*}
\end{lemma}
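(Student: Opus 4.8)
The plan is to mirror the proof of Lemma~\ref{Lem-u2}, carried out one derivative higher. Recalling from \eqref{equ74}--\eqref{equ76} that $I^{(3\times\varepsilon)}_2=-\int\frac{\varepsilon}{n}\,\partial_x^3\phi^{\varepsilon}_R\,\partial_t\partial_x^3n^{\varepsilon}_R$, I would first differentiate the Poisson equation \eqref{rem2-3} by $\partial_t\partial_x^3$ and write
\begin{equation*}
\partial_t\partial_x^3n^{\varepsilon}_R=\partial_t\partial_x^3\phi^{\varepsilon}_R-\varepsilon\,\partial_t\partial_x^5\phi^{\varepsilon}_R+\varepsilon\,\partial_t\partial_x^3\big(\phi^{(1)}\phi^{\varepsilon}_R\big)+\varepsilon^2\,\partial_t\partial_x^3\mathcal R_3=:\sum_{i=1}^4D_i^{(3)},
\end{equation*}
which splits $I^{(3\times\varepsilon)}_2=\sum_{i=1}^4I^{(3\times\varepsilon)}_{2i}$ with $I^{(3\times\varepsilon)}_{2i}=-\int\frac{\varepsilon}{n}\,\partial_x^3\phi^{\varepsilon}_R\,D_i^{(3)}$, exactly paralleling the decomposition $\sum_iD_i^{(2)}$ used in Lemma~\ref{Lem-u2}. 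For $I^{(3\times\varepsilon)}_{21}$ and the leading part of $I^{(3\times\varepsilon)}_{23}$ I would pull the time derivative out of $|\partial_x^3\phi^{\varepsilon}_R|^2$, producing $-\frac12\frac{d}{dt}\int\frac{\varepsilon(1+\varepsilon\phi^{(1)})}{n}|\partial_x^3\phi^{\varepsilon}_R|^2$ together with $\tfrac12\int\partial_t[\varepsilon(1+\varepsilon\phi^{(1)})/n]|\partial_x^3\phi^{\varepsilon}_R|^2$; using $\partial_tn=\varepsilon\partial_t\tilde n+\varepsilon^3\partial_tn^{\varepsilon}_R$ and the $H^1$-bound on $\varepsilon\partial_tn^{\varepsilon}_R$ from Lemma~\ref{L2}, the latter is $\le C_1(1+\varepsilon^2|\!|\!|(u^{\varepsilon}_R,\phi^{\varepsilon}_R)|\!|\!|^2_{\varepsilon})(\varepsilon\|\phi^{\varepsilon}_R\|_{H^3}^2)$. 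The Leibniz remainders in $\partial_t\partial_x^3(\phi^{(1)}\phi^{\varepsilon}_R)$ either put a $\partial_t$ on the known smooth profile $\phi^{(1)}$ or leave $\partial_t\partial_x^j\phi^{\varepsilon}_R$ with $j\le2$, so the $\varepsilon^2$ prefactor lets them be absorbed via Lemma~\ref{L3} (with $\alpha\le1$), Lemma~\ref{L2} and Sobolev embedding, just as for $I^{(\gamma)}_{23}$. The term $I^{(3\times\varepsilon)}_{24}$ is handled by Lemma~\ref{L8}: after one integration by parts in $x$ (so as never to place three $x$-derivatives on $\mathcal R_3$) the $\varepsilon^3$-prefactor together with Lemmas~\ref{L1}--\ref{L3} and the smoothness of $\mathcal R_3$ gives a bound of admissible form.

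The decisive term is $I^{(3\times\varepsilon)}_{22}=\int\frac{\varepsilon^2}{n}\,\partial_x^3\phi^{\varepsilon}_R\,\partial_t\partial_x^5\phi^{\varepsilon}_R$. Integrating by parts once in $x$,
\begin{equation*}
I^{(3\times\varepsilon)}_{22}=-\int\frac{\varepsilon^2}{n}\,\partial_x^4\phi^{\varepsilon}_R\,\partial_t\partial_x^4\phi^{\varepsilon}_R-\int\partial_x\!\Big[\frac{\varepsilon^2}{n}\Big]\,\partial_x^3\phi^{\varepsilon}_R\,\partial_t\partial_x^4\phi^{\varepsilon}_R=:I^{(3\times\varepsilon)}_{221}+\mathcal B^{(3\times\varepsilon)}.
\end{equation*}
In $I^{(3\times\varepsilon)}_{221}$ I pull out the time derivative to get $-\frac12\frac{d}{dt}\int\frac{\varepsilon^2}{n}|\partial_x^4\phi^{\varepsilon}_R|^2$ plus $\tfrac12\int\partial_t[\varepsilon^2/n]|\partial_x^4\phi^{\varepsilon}_R|^2$, which the $\varepsilon^2$ prefactor and Lemmas~\ref{L1}--\ref{L2} bound by $C_1(1+\varepsilon^2|\!|\!|(u^{\varepsilon}_R,\phi^{\varepsilon}_R)|\!|\!|^2_{\varepsilon})|\!|\!|(u^{\varepsilon}_R,\phi^{\varepsilon}_R)|\!|\!|^2_{\varepsilon}$; assembling this $\frac{d}{dt}$-term with those from $I^{(3\times\varepsilon)}_{21}$ and $I^{(3\times\varepsilon)}_{23}$ gives precisely the differential quantity in the statement. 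The remaining piece is by definition $\mathcal B^{(3\times\varepsilon)}$, and it is left untouched: since $\partial_t\partial_x^4\phi^{\varepsilon}_R$ is not controlled by $|\!|\!|(u^{\varepsilon}_R,\phi^{\varepsilon}_R)|\!|\!|_{\varepsilon}$ (the triple norm \eqref{def-A} contains $\varepsilon^2\|\partial_x^4\phi^{\varepsilon}_R\|^2$ but no time derivative of it), no further integration by parts in $x$ can be afforded, and this is exactly the term designed to be cancelled against $\mathcal B^{(2)}$ of Corollary~\ref{Cor} in the final energy identity (cf.\ Remark~\ref{remark}).

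The main obstacle is therefore not any isolated estimate but the bookkeeping of powers of $\varepsilon$: every error term must be shown to be either $O(\varepsilon)$ or of the admissible shape $C_1(1+\varepsilon^2|\!|\!|(u^{\varepsilon}_R,\phi^{\varepsilon}_R)|\!|\!|^2_{\varepsilon})(1+|\!|\!|(u^{\varepsilon}_R,\phi^{\varepsilon}_R)|\!|\!|^2_{\varepsilon})$, and this is delicate because $\|\partial_x^3n^{\varepsilon}_R\|$, $\|\partial_t\partial_x^2n^{\varepsilon}_R\|$ and $\|\partial_t\partial_x^3\phi^{\varepsilon}_R\|$ all lie outside what $|\!|\!|(u^{\varepsilon}_R,\phi^{\varepsilon}_R)|\!|\!|_{\varepsilon}$ controls; one must always pair them with a sufficiently high power of $\varepsilon$—those already built into the $D_i^{(3)}$ and into $\partial_x[\varepsilon^2/n]$—and invoke Lemmas~\ref{L1}--\ref{L3} in the same order as in Lemmas~\ref{Lem-u1}--\ref{Lem-u2}. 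The term $\mathcal B^{(3\times\varepsilon)}$ is the single genuinely uncontrollable contribution, and the point of the lemma is precisely that it survives in this exact form rather than being estimated away.
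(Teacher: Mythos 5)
Your proposal matches the paper's proof essentially step-for-step: the same decomposition of $I^{(3\times\varepsilon)}_2=\sum_{i=1}^4 I^{(3\times\varepsilon)}_{2i}$ obtained by substituting $\partial_t\partial_x^3$ of the Poisson equation \eqref{rem2-3}, the same time integration by parts for $I^{(3\times\varepsilon)}_{21}$, $I^{(3\times\varepsilon)}_{23}$ (yielding the $\tfrac{d}{dt}$-quantities), the same single spatial integration by parts in $I^{(3\times\varepsilon)}_{22}$ that isolates $I^{(3\times\varepsilon)}_{221}$ and leaves $\mathcal B^{(3\times\varepsilon)}$ untouched, the same one-integration-by-parts treatment of $I^{(3\times\varepsilon)}_{24}$ via Lemma~\ref{L8}, and the same appeals to Lemmas~\ref{L1}--\ref{L3}. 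There is no gap; the correct recognition that $\mathcal B^{(3\times\varepsilon)}$ must be left intact for the later cancellation with $\mathcal B^{(2)}$ is the key point and you have it.
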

\begin{proof}
We first recall that from \eqref{equ76}
\begin{equation*}
\begin{split}
I^{(3\times\varepsilon)}_2=&-\int\frac{\varepsilon}{n}\partial_x^3\phi^{\varepsilon}_R \partial_{t}\partial_x^3n^{\varepsilon}_R.
\end{split}
\end{equation*}
Taking $\partial_{t}\partial_x^3$ of \eqref{rem2-3}, and then inserting the result in $I^{(3\times\varepsilon)}_2$, we have
\begin{equation}\label{equ86}
\begin{split}
I^{(3\times\varepsilon)}_2=&-\int\frac{\varepsilon}{n}\partial_x^3\phi^{\varepsilon}_R \partial_{t}\partial_x^3 \left[\phi^{\varepsilon}_R-\varepsilon\partial_x^2\phi^{\varepsilon}_R +\varepsilon(\phi^{(1)}\phi^{\varepsilon}_R) +\varepsilon^2\mathcal R_3\right] =:\sum_{i=1}^4I^{(3\times\varepsilon)}_{2i}.
\end{split}
\end{equation}

\emph{Estimate of $I^{(3\times\varepsilon)}_{21}$ in \eqref{equ86}.} By integration by parts in $t$, and then using Sobolev embedding $H^1\hookrightarrow L^{\infty}$ and Lemma \ref{L2}, we have
\begin{equation}\label{equ87}
\begin{split}
I^{(3\times\varepsilon)}_{21}
=&-\int\frac{\varepsilon}{n}\partial_x^3\phi^{\varepsilon}_R \partial_{t}\partial_x^3\phi^{\varepsilon}_R\\
=&-\frac12\frac{d}{dt}\int\frac{\varepsilon}{n}|\partial_x^3\phi^{\varepsilon}_R|^2 +\frac12\int\partial_t\left[\frac{\varepsilon}{n}\right]|\partial_x^3\phi^{\varepsilon}_R|^2\\
=&-\frac12\frac{d}{dt}\int\frac{\varepsilon}{n}|\partial_x^3\phi^{\varepsilon}_R|^2 -\int\varepsilon\left[\frac{\varepsilon\partial_t\tilde n}{n^2}+\frac{\varepsilon^3\partial_tn^{\varepsilon}_R}{n^2}\right] |\partial_x^3\phi^{\varepsilon}_R|^2\\
\leq & -\frac12\frac{d}{dt}\int\frac{\varepsilon}{n}|\partial_x^3\phi^{\varepsilon}_R|^2 +C\varepsilon(1+\|\varepsilon\partial_tn^{\varepsilon}_R\|_{L^{\infty}}) (\varepsilon\|\partial_x^3\phi^{\varepsilon}_R\|^2)\\
\leq & -\frac12\frac{d}{dt}\int\frac{\varepsilon}{n}|\partial_x^3\phi^{\varepsilon}_R|^2 +C_1\varepsilon(1+\varepsilon|\!|\!|(u^{\varepsilon}_R,\phi^{\varepsilon}_R) |\!|\!|_{\varepsilon}) (\varepsilon\|\partial_x^3\phi^{\varepsilon}_R\|^2).
\end{split}
\end{equation}

\emph{Estimate of $I^{(3\times\varepsilon)}_{22}$ in \eqref{equ86}.} By integration by parts, we have
\begin{equation}\label{bad3}
\begin{split}
I^{(3\times\varepsilon)}_{22}=&\int\partial_x^3\phi^{\varepsilon}_R \left[\frac{\varepsilon^2}{n}\right] \partial_{t}\partial_x^5\phi^{\varepsilon}_R\\
=&\underbrace{-\int\partial_x^4\phi^{\varepsilon}_R\left[\frac{\varepsilon^2}{n}\right] \partial_{t}\partial_x^4\phi^{\varepsilon}_R}_{I^{(3\times\varepsilon)}_{221}} \underbrace{-\int\partial_x^3\phi^{\varepsilon}_R\partial_x\left[\frac{\varepsilon^2}{n}\right] \partial_{t}\partial_x^4\phi^{\varepsilon}_R}_{\mathcal B^{(3\times\varepsilon)}}.
\end{split}
\end{equation}
For the first term $I^{(3\times\varepsilon)}_{221}$ in \eqref{bad3}, we have
\begin{equation*}
\begin{split}
I^{(3\times\varepsilon)}_{221}=&-\frac12\frac{d}{dt}\int\left[\frac{\varepsilon^2}{n}\right] |\partial_x^4\phi^{\varepsilon}_R|^2 +\frac12\int\partial_t\left[\frac{\varepsilon^2}{n}\right] |\partial_x^4\phi^{\varepsilon}_R|^2\\
=&-\frac12\frac{d}{dt}\int\left[\frac{\varepsilon^2}{n}\right]|\partial_x^4\phi^{\varepsilon}_R|^2 -\frac12\int\varepsilon^2\left[\frac{\varepsilon\partial_t\tilde n}{n^2}+\frac{\varepsilon^3\partial_tn^{\varepsilon}_R}{n^2}\right] |\partial_x^4\phi^{\varepsilon}_R|^2\\
\leq & -\frac12\frac{d}{dt}\int\left[\frac{\varepsilon^2}{n}\right]|\partial_x^4\phi^{\varepsilon}_R|^2 +C_1\varepsilon(1+\varepsilon|\!|\!|(u^{\varepsilon}_R,\phi^{\varepsilon}_R) |\!|\!|_{\varepsilon}) (\varepsilon^2\|\partial_x^4\phi^{\varepsilon}_R\|^2).
\end{split}
\end{equation*}
The term $\mathcal B^{(3\times\varepsilon)}$ cannot be controlled in terms of $|\!|\!|(u^{\varepsilon}_R,\phi^{\varepsilon}_R)|\!|\!|_{\varepsilon}$ so far (see Remark \ref{remark}). Its estimate is postponed to Section \ref{set3.3} by an exact cancellation with $\mathcal B^{(2)}$ in Corollary \ref{Cor}.

\emph{Estimate of $I^{(3\times\varepsilon)}_{23}$ in \eqref{equ86}.} Similar to the estimate of $I^{(3\times\varepsilon)}_{21}$ in \eqref{equ87}, we have
\begin{equation*}
\begin{split}
I^{(3\times\varepsilon)}_{23} \leq -\frac12\frac{d}{dt}\int \frac{\varepsilon^2\phi^{(1)}}{n}|\partial_x^3\phi^{\varepsilon}_R|^2 +C_1\varepsilon(1+\varepsilon|\!|\!|(u^{\varepsilon}_R,\phi^{\varepsilon}_R) |\!|\!|_{\varepsilon}) (1+|\!|\!|(u^{\varepsilon}_R,\phi^{\varepsilon}_R) |\!|\!|_{\varepsilon}).
\end{split}
\end{equation*}

\emph{Estimate of $I^{(3\times\varepsilon)}_{24}$ in \eqref{equ86}.} Integration by parts yields
\begin{equation}\label{equ88}
\begin{split}
I^{(3\times\varepsilon)}_{24}=&-\int\partial_x^3\phi^{\varepsilon}_R \left[\frac{\varepsilon^3}{n}\right]\partial_t\partial_x^3\mathcal R_3\\
=&\int\partial_x^4\phi^{\varepsilon}_R \left[\frac{\varepsilon^3}{n}\right]\partial_t\partial_x^2\mathcal R_3 +\int\partial_x^3\phi^{\varepsilon}_R \partial_x\left[\frac{\varepsilon^3}{n}\right]\partial_t\partial_x^2\mathcal R_3.
\end{split}
\end{equation}
By Lemma \ref{L8} in the Appendix,
\begin{equation*}
\begin{split}
\varepsilon\|\varepsilon\partial_t\partial_x^2\mathcal R_3\|^2\leq & \varepsilon C(\|\phi^{(i)}\|_{H^{\tilde s_i}},\varepsilon\|\phi^{\varepsilon}_R\|_{H^2}) \|\varepsilon\partial_t\phi^{\varepsilon}_R\|_{H^2}
\end{split}
\end{equation*}
and by Sobolev embedding and Lemma \ref{L1}
\begin{equation*}
\begin{split}
\|\partial_x(\frac{1}{n})\|_{L^{\infty}}^2\leq & C_1\varepsilon(1+\varepsilon^2|\!|\!|(u^{\varepsilon}_R,\phi^{\varepsilon}_R) |\!|\!|_{\varepsilon}^2).
\end{split}
\end{equation*}
By \eqref{equ88}, we therefore have
\begin{equation}\label{equ96}
\begin{split}
I^{(3\times\varepsilon)}_{24} \leq & C(1+\varepsilon^2|\!|\!|(u^{\varepsilon}_R,\phi^{\varepsilon}_R) |\!|\!|_{\varepsilon}^2)(\varepsilon^2\|\partial_x^3\phi^{\varepsilon}_R\|_{H^1}^2)+ C_1\varepsilon (\varepsilon\|\varepsilon\partial_t\phi^{\varepsilon}_R\|_{H^2}^2)\\
\leq & C_1(1+\varepsilon^2|\!|\!|(u^{\varepsilon}_R,\phi^{\varepsilon}_R) |\!|\!|_{\varepsilon}^2)(1+|\!|\!|(u^{\varepsilon}_R,\phi^{\varepsilon}_R) |\!|\!|^2_{\varepsilon}).
\end{split}
\end{equation}
Lemma \ref{L17} then follows.
\end{proof}

\begin{remark}\label{remark}
By Lemma \ref{L3}, only $\|\partial_t\partial_x^2\phi^{\varepsilon}_R\|_{L^2}$ can be controlled in terms of $|\!|\!|(u^{\varepsilon}_R,\phi^{\varepsilon}_R)|\!|\!|_{\varepsilon}^2$ through $\|\partial_{t}\phi^{\varepsilon}_R\|_{H^1}^2$ by Lemma \ref{L2}. However, upon integration by parts, there will be a contribution $\int\partial_x^5\phi^{\varepsilon}_R\partial_x\left[\frac{\varepsilon^2}{n}\right] \partial_{t}\partial_x^2\phi^{\varepsilon}_R$, which is not controllable in terms of $|\!|\!|(u^{\varepsilon}_R,\phi^{\varepsilon}_R)|\!|\!|_{\varepsilon}^2$ due to $\partial_x^5\phi^{\varepsilon}_R$.
\end{remark}

However, $\mathcal B^{(3\times\varepsilon)}$ is controlled by an exact cancellation by using \eqref{rem2-3} one more time. Besides the term $\mathcal B^{(3\times\varepsilon)}$, there is a term $\mathcal B^{(2)}$ with the same structure in Corollary \ref{Cor}. Recalling $\mathcal B^{(2)}$ in Corollary \ref{Cor}, we obtain
\begin{equation}\label{B23}
\begin{split}
\mathcal G^{(2,\varepsilon)}=&\mathcal B^{(2)}+\mathcal B^{(3\times\varepsilon)} =\int\partial_x(\frac{\varepsilon}n)\partial_{x}^3\phi^{\varepsilon}_R
\left[\partial_{t}\partial_x^2(\phi^{\varepsilon}_R -\varepsilon\partial_x^2\phi^{\varepsilon}_R)\right].
\end{split}
\end{equation}
The crucial observation is that the combination $(\phi^{\varepsilon}_R -\varepsilon\partial_x^2\phi^{\varepsilon}_R)$ exactly appears in the Poisson equation \eqref{rem2-3} and can be controlled.

\subsection{Control of $\mathcal G^{(2,\varepsilon)}$}\label{set3.3}
\begin{proposition}\label{PB}
Let $(n^{\varepsilon}_R,u^{\varepsilon}_R,\phi^{\varepsilon}_R)$ be a solution to \eqref{rem2}, then
\begin{equation}\label{Gron}
\begin{split}
\mathcal G^{(2,\varepsilon)} \leq & C_1(1+\varepsilon^2|\!|\!|(u^{\varepsilon}_R,\phi^{\varepsilon}_R) |\!|\!|_{\varepsilon}^2)(1+|\!|\!|(u^{\varepsilon}_R,\phi^{\varepsilon}_R) |\!|\!|^2_{\varepsilon}),
\end{split}
\end{equation}
where $|\!|\!|(u^{\varepsilon}_R,\phi^{\varepsilon}_R) |\!|\!|_{\varepsilon}$ is defined in \eqref{def-A}.
\end{proposition}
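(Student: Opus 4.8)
The plan is to combine the identity \eqref{B23} for $\mathcal G^{(2,\varepsilon)}$ with one last application of the Poisson equation \eqref{rem2-3}, followed by a single integration by parts in $x$ that lowers the order of the $\partial_t$-derivatives. Concretely, inserting \eqref{rem2-3} in the form $\phi^{\varepsilon}_R-\varepsilon\partial_x^2\phi^{\varepsilon}_R=n^{\varepsilon}_R-\varepsilon\phi^{(1)}\phi^{\varepsilon}_R-\varepsilon^2{\mathcal R}_3$ into \eqref{B23} gives
\begin{equation*}
\mathcal G^{(2,\varepsilon)}=\int\partial_x\left[\frac{\varepsilon}{n}\right]\partial_x^3\phi^{\varepsilon}_R\, \partial_t\partial_x^2\left(n^{\varepsilon}_R-\varepsilon\phi^{(1)}\phi^{\varepsilon}_R-\varepsilon^2{\mathcal R}_3\right)dx .
\end{equation*}
The gain is that $\partial_t\partial_x^2\phi^{\varepsilon}_R$ and $\varepsilon\partial_t\partial_x^4\phi^{\varepsilon}_R$, which appear separately and uncontrollably in $\mathcal B^{(2)}$ and $\mathcal B^{(3\times\varepsilon)}$ (see Remark \ref{remark}), are now replaced by $\partial_t\partial_x^2 n^{\varepsilon}_R$ together with genuinely lower order terms.

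Next I would integrate by parts in $x$, writing $\partial_t\partial_x^2(\cdots)=\partial_x(\partial_t\partial_x(\cdots))$, to obtain
\begin{equation*}
\mathcal G^{(2,\varepsilon)}=-\int\left(\partial_x^2\left[\frac{\varepsilon}{n}\right]\partial_x^3\phi^{\varepsilon}_R+\partial_x\left[\frac{\varepsilon}{n}\right]\partial_x^4\phi^{\varepsilon}_R\right)\partial_t\partial_x\left(n^{\varepsilon}_R-\varepsilon\phi^{(1)}\phi^{\varepsilon}_R-\varepsilon^2{\mathcal R}_3\right)dx ,
\end{equation*}
and then split into the three contributions from $n^{\varepsilon}_R$, $-\varepsilon\phi^{(1)}\phi^{\varepsilon}_R$ and $-\varepsilon^2{\mathcal R}_3$. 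The structural facts I would exploit are: since $n=1+\varepsilon(\tilde n+\varepsilon^2 n^{\varepsilon}_R)$, both $\partial_x[\varepsilon/n]$ and $\partial_x^2[\varepsilon/n]$ are $O(\varepsilon^2)$ as multipliers (the $\partial_x^2 n^{\varepsilon}_R$-piece of $\partial_x^2[\varepsilon/n]$ carries an extra $\varepsilon^2$, bounded in $L^2$ by Lemma \ref{L1} with $\alpha=2$ and paired with $\partial_x^3\phi^{\varepsilon}_R$ in $L^\infty$ via $H^1\hookrightarrow L^\infty$), while both $\sqrt\varepsilon\,\partial_x^3\phi^{\varepsilon}_R$ and $\varepsilon\,\partial_x^4\phi^{\varepsilon}_R$ already belong to $|\!|\!|(u^{\varepsilon}_R,\phi^{\varepsilon}_R)|\!|\!|_{\varepsilon}$.

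For the $n^{\varepsilon}_R$-contribution I would use $\|\varepsilon\partial_t\partial_x n^{\varepsilon}_R\|^2\le C_1|\!|\!|(u^{\varepsilon}_R,\phi^{\varepsilon}_R)|\!|\!|_{\varepsilon}^2+C\varepsilon$, which is exactly \eqref{equ33} in Lemma \ref{L2}; for the $\varepsilon\phi^{(1)}\phi^{\varepsilon}_R$-contribution I would expand $\partial_t\partial_x(\phi^{(1)}\phi^{\varepsilon}_R)$ by Leibniz — the profile factors $\phi^{(1)},\partial_x\phi^{(1)},\partial_t\phi^{(1)},\partial_t\partial_x\phi^{(1)}$ are bounded, $\phi^{\varepsilon}_R$ and $\partial_x\phi^{\varepsilon}_R$ lie in $|\!|\!|\cdot|\!|\!|_{\varepsilon}$, and $\|\varepsilon\partial_t\phi^{\varepsilon}_R\|$, $\|\varepsilon\partial_t\partial_x\phi^{\varepsilon}_R\|$ are controlled by Lemma \ref{L3} (with $\alpha=0,1$) together with \eqref{equ32}--\eqref{equ33}; and for the $\varepsilon^2{\mathcal R}_3$-contribution I would use that ${\mathcal R}_3$ involves no derivative of $\phi^{\varepsilon}_R$, so by Lemma \ref{L8} $\varepsilon^2\partial_t\partial_x{\mathcal R}_3$ is bounded in $L^2$ in terms of $\|\varepsilon\partial_t\phi^{\varepsilon}_R\|_{H^1}$, hence of $|\!|\!|\cdot|\!|\!|_{\varepsilon}$. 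In each case the $O(\varepsilon^2)$ multiplier times the $\varepsilon^{1/2}$- or $\varepsilon$-weighted derivative of $\phi^{\varepsilon}_R$, paired with the $\varepsilon\partial_t\partial_x$-quantity by Cauchy--Schwarz and Young's inequality, produces at worst cubic and quartic terms $\varepsilon^2|\!|\!|\cdot|\!|\!|_{\varepsilon}^3$, $\varepsilon^2|\!|\!|\cdot|\!|\!|_{\varepsilon}^4$, all absorbed into $C_1(1+\varepsilon^2|\!|\!|\cdot|\!|\!|_{\varepsilon}^2)(1+|\!|\!|\cdot|\!|\!|_{\varepsilon}^2)$, which is precisely \eqref{Gron}.

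The hard part — and the reason Proposition \ref{PB} comes last — is $\mathcal B^{(3\times\varepsilon)}$ itself: as stressed in Remark \ref{remark}, $\partial_t\partial_x^4\phi^{\varepsilon}_R$ (equivalently $\partial_t\partial_x^2 n^{\varepsilon}_R$) and $\partial_t\partial_x^3\phi^{\varepsilon}_R$ are genuinely outside $|\!|\!|(u^{\varepsilon}_R,\phi^{\varepsilon}_R)|\!|\!|_{\varepsilon}$, and integrating $\mathcal B^{(3\times\varepsilon)}$ by parts in isolation merely re-introduces a fifth order spatial derivative of $\phi^{\varepsilon}_R$. The scheme closes only because, once $\mathcal B^{(2)}$ from Corollary \ref{Cor} is added, the dangerous pieces recombine into the operator $(1-\varepsilon\partial_x^2)$ acting on $\phi^{\varepsilon}_R$ inside \eqref{B23}, which \eqref{rem2-3} identifies with the lower order combination $n^{\varepsilon}_R-\varepsilon\phi^{(1)}\phi^{\varepsilon}_R-\varepsilon^2{\mathcal R}_3$; after this substitution and the single integration by parts above, the remaining work is routine. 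Feeding \eqref{Gron} back into Corollary \ref{Cor} and Proposition \ref{L7} then removes the last uncontrolled term, and a Gronwall argument for $|\!|\!|(u^{\varepsilon}_R,\phi^{\varepsilon}_R)|\!|\!|_{\varepsilon}^2$ completes the case $T_i=0$.
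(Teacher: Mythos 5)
Your proposal is correct and follows essentially the same route as the paper: you identify the key cancellation (adding $\mathcal B^{(2)}$ and $\mathcal B^{(3\times\varepsilon)}$ reconstitutes the operator $1-\varepsilon\partial_x^2$ acting on $\phi^{\varepsilon}_R$, which the Poisson equation \eqref{rem2-3} replaces by $n^{\varepsilon}_R-\varepsilon\phi^{(1)}\phi^{\varepsilon}_R-\varepsilon^2\mathcal R_3$), then invoke Lemma \ref{L2}, Lemma \ref{L3} and Lemma \ref{L8} to control the resulting time derivatives. The only cosmetic difference is that you integrate by parts once on the whole combined expression before splitting, whereas the paper first splits $\mathcal G^{(2,\varepsilon)}$ into $\mathcal G^{(2,\varepsilon)}_1,\mathcal G^{(2,\varepsilon)}_2,\mathcal G^{(2,\varepsilon)}_3$ and integrates by parts (or applies the lemmas directly) term by term; the estimates obtained are the same.
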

\begin{proof}
Recall \eqref{B23}. From the Poisson equation \eqref{rem2-3}, we have
\begin{equation}\label{equ90}
\begin{split}
\mathcal G^{(2,\varepsilon)} =&\int\partial_x(\frac{\varepsilon}n)\partial_{x}^3\phi^{\varepsilon}_R \left[\partial_{t}\partial_x^2(\phi^{\varepsilon}_R -\varepsilon\partial_x^2\phi^{\varepsilon}_R)\right]\\
=&\int\partial_x(\frac{\varepsilon}n)\partial_{x}^3\phi^{\varepsilon}_R
\left[\partial_{t}\partial_x^2(n^{\varepsilon}_R-\varepsilon(\phi^{(1)}\phi^{\varepsilon}_R) -\varepsilon^2\mathcal R_3)\right]=\sum_{i=1}^3\mathcal G^{(2,\varepsilon)}_i.
\end{split}
\end{equation}

\emph{Estimate of $\mathcal G^{(2,\varepsilon)}_1$.} By integration by parts, we have
\begin{equation}\label{equ89}
\begin{split}
\mathcal G^{(2,\varepsilon)}_1=&\int\partial_{x}^3\phi^{\varepsilon}_R \partial_x\left[\frac{\varepsilon}n\right]\partial_{t}\partial_x^2n^{\varepsilon}_R\\
=&-\int\partial_{x}^4\phi^{\varepsilon}_R \partial_x\left[\frac{\varepsilon}n\right]\partial_{tx}n^{\varepsilon}_R -\int\partial_{x}^3\phi^{\varepsilon}_R \partial_x^2\left[\frac{\varepsilon}n\right]\partial_{tx}n^{\varepsilon}_R\\
=&:\mathcal G^{(2,\varepsilon)}_{11}+\mathcal G^{(2,\varepsilon)}_{12}.
\end{split}
\end{equation}
Recalling the expression of $n$ in \eqref{expan}, we have
\begin{equation*}
\begin{split}
|\partial_x(\frac{\varepsilon}{n})|\leq C(\varepsilon^2+\varepsilon^4|\partial_xn^{\varepsilon}_R|),
\end{split}
\end{equation*}
and
\begin{equation*}
\begin{split}
\left|\partial_x^2(\frac{\varepsilon}{n})\right|\leq C(\varepsilon^2 +\varepsilon^4|\partial_x^2n^{\varepsilon}_R| +\varepsilon^5|\partial_xn^{\varepsilon}_R| +\varepsilon^7|\partial_xn^{\varepsilon}_R|^2).
\end{split}
\end{equation*}
The first term $\mathcal G^{(2,\varepsilon)}_{11}$ in \eqref{equ89} is bounded by Sobolev embedding, Lemma \ref{L2} and \ref{L1}
\begin{equation}\label{equ95}
\begin{split}
\mathcal G^{(2,\varepsilon)}_{11} \leq & C(\varepsilon\|\partial_x^4\phi^{\varepsilon}_R\|) \|\varepsilon\partial_{tx}n^{\varepsilon}_R\| +C\varepsilon(\varepsilon\|\partial_x^4\phi^{\varepsilon}_R\|) (\varepsilon\|\partial_xn^{\varepsilon}_R\|_{L^{\infty}}) \|\varepsilon\partial_{tx}n^{\varepsilon}_R\|\\
\leq & C\varepsilon^2\|\partial_x^4\phi^{\varepsilon}_R\|^2 +C_1(1+\varepsilon^2|\!|\!|(u^{\varepsilon}_R,\phi^{\varepsilon}_R) |\!|\!|_{\varepsilon}^2) (1+|\!|\!|(u^{\varepsilon}_R,\phi^{\varepsilon}_R) |\!|\!|_{\varepsilon}^2).
\end{split}
\end{equation}
To estimate $\mathcal G^{(2,\varepsilon)}_{12}$ in \eqref{equ90}, we first observe that by Lemma \ref{L2} and Lemma \ref{L1},
\begin{equation}\label{equ91}
\begin{split}
\varepsilon^2\int|\partial_{x}^3\phi^{\varepsilon}_R\partial_{tx}n^{\varepsilon}_R| \leq & C\varepsilon\|\partial_{x}^3\phi^{\varepsilon}_R\|^2 +C\|\varepsilon\partial_{tx}n^{\varepsilon}_R\|^2\\
\leq & C_1(1+|\!|\!|(u^{\varepsilon}_R,\phi^{\varepsilon}_R) |\!|\!|_{\varepsilon}^2).
\end{split}
\end{equation}
Secondly, by Sobolev embedding $H^1\hookrightarrow L^{\infty}$, Lemma \ref{L1} and Lemma \ref{L2}
\begin{equation}\label{equ92}
\begin{split}
\varepsilon^4\int|\partial_{x}^3\phi^{\varepsilon}_R\partial_{x}^2n^{\varepsilon}_R \partial_{tx}n^{\varepsilon}_R| \leq & C\varepsilon^2\|\varepsilon\partial_{x}^3\phi^{\varepsilon}_R\|_{L^{\infty}}^2 \|\partial_{x}^2n^{\varepsilon}_R\|^2 +C\varepsilon^2\|\varepsilon\partial_{tx}n^{\varepsilon}_R\|^2\\
\leq & C_1(1+\varepsilon^2|\!|\!|(u^{\varepsilon}_R,\phi^{\varepsilon}_R) |\!|\!|_{\varepsilon}^2)(1+\varepsilon^2\|\phi^{\varepsilon}_R\|_{H^4}^2 ).
\end{split}
\end{equation}
Finally,
\begin{equation}\label{equ93}
\begin{split}
\varepsilon^5\int&|\partial_{x}^3\phi^{\varepsilon}_R| (1+|\partial_{x}n^{\varepsilon}_R|)|\partial_{x}n^{\varepsilon}_R| |\partial_{tx}n^{\varepsilon}_R|\\
\leq & C\varepsilon^2(1+\|\partial_{x}n^{\varepsilon}_R\|_{L^{\infty}}^2) \|\varepsilon\partial_{x}^3\phi^{\varepsilon}_R\|^2 +C\varepsilon^2\|\partial_{x}n^{\varepsilon}_R\|_{L^{\infty}}^2 \|\varepsilon\partial_{tx}n^{\varepsilon}_R\|^2\\
\leq & C_1(1+\varepsilon^2|\!|\!|(u^{\varepsilon}_R,\phi^{\varepsilon}_R) |\!|\!|_{\varepsilon}^2)(1+|\!|\!|(u^{\varepsilon}_R,\phi^{\varepsilon}_R) |\!|\!|_{\varepsilon}^2).
\end{split}
\end{equation}
Summarizing inequalities \eqref{equ91}, \eqref{equ92} and \eqref{equ93}, we have
\begin{equation}\label{equ94}
\begin{split}
\mathcal G^{2,\varepsilon}_{12} \leq & C_1(1+\varepsilon^2|\!|\!|(u^{\varepsilon}_R,\phi^{\varepsilon}_R) |\!|\!|_{\varepsilon}^2)(1+|\!|\!|(u^{\varepsilon}_R,\phi^{\varepsilon}_R) |\!|\!|_{\varepsilon}^2).
\end{split}
\end{equation}
Combining \eqref{equ95} and \eqref{equ94}, we can bound $\mathcal G^{2,\varepsilon}_1$ in \eqref{equ89} as
\begin{equation}\label{equ97}
\begin{split}
\mathcal G^{2,\varepsilon}_1 \leq & C_1(1+\varepsilon^2|\!|\!|(u^{\varepsilon}_R,\phi^{\varepsilon}_R) |\!|\!|_{\varepsilon}^2)(1+|\!|\!|(u^{\varepsilon}_R,\phi^{\varepsilon}_R) |\!|\!|_{\varepsilon}^2).
\end{split}
\end{equation}

\emph{Estimate of $\mathcal G^{(2,\varepsilon)}_{2}$ in \eqref{equ90}.} From Lemma \ref{L3} and \ref{L2}
\begin{equation*}
\begin{split}
\varepsilon\|\varepsilon\partial_t\partial_x^2 (\phi^{(1)}\phi^{\varepsilon}_R)\|^2_{L^2}\leq & C_1\|\varepsilon \partial_tn^{\varepsilon}_R\|_{H^1}^2 +C_1\varepsilon\|\phi^{\varepsilon}_R\|_{H^2}^2\\
\leq & C_1(1+|\!|\!|(u^{\varepsilon}_R,\phi^{\varepsilon}_R) |\!|\!|_{\varepsilon}^2),
\end{split}
\end{equation*}
and
\begin{equation*}
\begin{split}
\left|\partial_x(\frac{\varepsilon^2}{n})\right|\leq C\varepsilon^3(1+\varepsilon^2|\partial_xn^{\varepsilon}_R|).
\end{split}
\end{equation*}
By H\"older inequality and Sobolev embedding, we have
\begin{equation}\label{equ98}
\begin{split}
\mathcal G^{(2,\varepsilon)}_{2}=&\int\partial_x(\frac{\varepsilon^2}{n}) \partial_x^3\phi^{\varepsilon}_R\partial_t\partial_x^2(\phi^{(1)}\phi^{\varepsilon}_R)\\
\leq & C\varepsilon(1+\varepsilon^2\|\partial_xn\|_{L^{\infty}}^2)(\varepsilon\|\partial_x^3\phi^{\varepsilon}_R\|^2) +C\varepsilon(\varepsilon\|\varepsilon\partial_t\partial_x^2\phi^{\varepsilon}_R\|^2)\\
\leq & C_1\varepsilon(1+\varepsilon^2|\!|\!|(u^{\varepsilon}_R,\phi^{\varepsilon}_R) |\!|\!|_{\varepsilon}^2)(1+|\!|\!|(u^{\varepsilon}_R,\phi^{\varepsilon}_R) |\!|\!|_{\varepsilon}^2).
\end{split}
\end{equation}

\emph{Estimate of $\mathcal G^{(2,\varepsilon)}_{3}$ in \eqref{equ90}.} As the estimate for $I^{(3\times\varepsilon)}_{24}$ in \eqref{equ96}, we have
\begin{equation}\label{equ99}
\begin{split}
\mathcal G^{(2,\varepsilon)}_{3} \leq & C_1(1+\varepsilon^2|\!|\!|(u^{\varepsilon}_R,\phi^{\varepsilon}_R) |\!|\!|_{\varepsilon}^2)(1+|\!|\!|(u^{\varepsilon}_R,\phi^{\varepsilon}_R) |\!|\!|^2_{\varepsilon}).
\end{split}
\end{equation}
We complete the proof of Proposition \ref{PB} by adding the estimates \eqref{equ97}, \eqref{equ98} and \eqref{equ99} together.
\end{proof}

\begin{proof}[\textbf{Proof of Theorem \ref{thm1} for $T_i=0$}]
Adding the Propositions \ref{L4} with $\gamma=0,1$, Corollary \ref{Cor} and Proposition \ref{L7} and \ref{PB} together, we obtain
\begin{equation}\label{Gron}
\begin{split}
\frac12\frac{d}{dt}&[\|u^{\varepsilon}_R\|_{H^2}^2 +\varepsilon\|\partial_x^3u^{\varepsilon}_R\|_{L^2}^2]
   +\frac12\frac{d}{dt}[(\int\frac{1+\varepsilon\phi^{(1)}}{n} |\phi^{\varepsilon}_R|^2 +\int\frac{\varepsilon}{n}|\partial_x\phi^{\varepsilon}_R|^2)\\
&+(\int\frac{1+\varepsilon\phi^{(1)}}{n}|\partial_x\phi^{\varepsilon}_R|^2 +\int\frac{\varepsilon}{n}|\partial_x^2\phi^{\varepsilon}_R|^2)
   +(\int\frac{1+\varepsilon\phi^{(1)}}{n}|\partial_x^2\phi^{\varepsilon}_R|^2 \\&+\int\frac{\varepsilon}{n}|\partial_x^3\phi^{\varepsilon}_R|^2) +(\int\frac{\varepsilon(1+\varepsilon\phi^{(1)})}{n}|\partial_x^3\phi^{\varepsilon}_R|^2 +\int\frac{\varepsilon^2}{n}|\partial_x^4\phi^{\varepsilon}_R|^2)]\\
\leq & C_1(1+\varepsilon^2|\!|\!|(u^{\varepsilon}_R,\phi^{\varepsilon}_R) |\!|\!|_{\varepsilon}^2)(1+|\!|\!|(u^{\varepsilon}_R,\phi^{\varepsilon}_R) |\!|\!|^2_{\varepsilon}).
\end{split}
\end{equation}
Since $\phi^{(1)}$ is uniformly bounded, there exists some $\varepsilon_1>0$ such that when $\varepsilon<\varepsilon_1$, $1+\varepsilon\phi^{(1)}\geq 1/2$. Integrating the inequality \eqref{Gron} over $(0,t)$ yields
\begin{equation*}
\begin{split}
|\!|\!|(u^{\varepsilon}_R,\phi^{\varepsilon}_R)(t)|\!|\!|^2_{\varepsilon}\leq & C|\!|\!|(u^{\varepsilon}_R,\phi^{\varepsilon}_R)(0)|\!|\!|^2_{\varepsilon} +\int_0^t C_1(1+\varepsilon^2|\!|\!|(u^{\varepsilon}_R,\phi^{\varepsilon}_R) |\!|\!|_{\varepsilon}^2)(1+|\!|\!|(u^{\varepsilon}_R,\phi^{\varepsilon}_R) |\!|\!|^2_{\varepsilon})ds\\
\leq & C|\!|\!|(u^{\varepsilon}_R,\phi^{\varepsilon}_R)(0)|\!|\!|^2_{\varepsilon} +\int_0^tC_1(1+\varepsilon\tilde C)(1+|\!|\!|(u^{\varepsilon}_R,\phi^{\varepsilon}_R) |\!|\!|^2_{\varepsilon})ds,
\end{split}
\end{equation*}
where $C$ is an absolute constant.

Recall that $C_1$ depends on $|\!|\!|(u^{\varepsilon}_R, \phi^{\varepsilon}_R)|\!|\!|^2_{\varepsilon}$ through $\varepsilon|\!|\!|(u^{\varepsilon}_R, \phi^{\varepsilon}_R)|\!|\!|^2_{\varepsilon}$ and is nondecreasing. Let $C_1'=C_1(1)$ and $C_2>C\sup_{\varepsilon<1}|\!|\!|(u^{\varepsilon}_R,\phi^{\varepsilon}_R) (0)|\!|\!|^2_{\varepsilon}$. For any arbitrarily given $\tau>0$, we choose $\tilde C$ sufficiently large such that $\tilde C>e^{4C_1'\tau}(1+C_2)(1+C_1')$. Then there exists $\varepsilon_0>0$ such that ${\varepsilon}\tilde C\leq 1$ for all $\varepsilon<\varepsilon_0$, we have
\begin{equation}\label{e999}
\begin{split}
\sup_{0\leq t\leq\tau}|\!|\!|(u^{\varepsilon}_R, \phi^{\varepsilon}_R)(t)|\!|\!|^2_{\varepsilon}\leq e^{4C_1'\tau}(C_2+1)<\tilde C.
\end{split}
\end{equation}
In particular, we have  the uniform bound for $(u^{\varepsilon}_R,\phi^{\varepsilon}_R)$
\begin{equation*}
\begin{split}
\sup_{0\leq t\leq\tau}\|(u^{\varepsilon}_R,\phi^{\varepsilon}_R)(t)\|_{H^2}^2 +\varepsilon\|\partial_x^3(u^{\varepsilon}_R, \phi^{\varepsilon}_R)(t)\|^2 +\varepsilon^2\|\partial_x^4\phi^{\varepsilon}_R(t)\|^2 \leq \tilde C.
\end{split}
\end{equation*}
On the other hand, by Lemma \ref{L1} and \eqref{e999}, we have
\begin{equation*}
\begin{split}
\sup_{0\leq t\leq\tau}\|n^{\varepsilon}_R(t)\|_{H^2}^2\leq \tilde C.
\end{split}
\end{equation*}
It is now standard to obtain uniform estimates independent of $\varepsilon$ by the continuity method. The proof of Theorem \ref{thm1} is complete for the case of $T_i=0$.
\end{proof}
\begin{proof}[\textbf{Proof of Theorem \ref{thm1}}]
Recall that the case of $T_i>0$ is proved in Section 2. The proof of Theorem \ref{thm1} is complete.
\end{proof}

\appendix
\section{}
This appendix consists of two main parts. In the first one, we give a simple proof of Theorem \ref{thm3}, and in the second one, we derive the remainder system.

\begin{proof}[Proof of Theorem \ref{thm3}]
We only give \emph{a priori} estimate. Consider the equation \eqref{linearized} for $k=2$. Let $\tau>0$ be arbitrarily fixed and $\tilde s_1$ be sufficiently large as in Theorem \ref{thm2}. Note $G^{(1)}$ only depends on $n^{(1)}\in H^{\tilde s_1}$, which is assumed to be bounded in $L^{\infty}(-\tau,\tau;H^{\tilde s_1})$. Multiplying the equation \eqref{linearized} with $n^{(2)}$ and integrating over $\Bbb R$, by integration by parts, the dispersive term vanishes and we have
\begin{equation*}
\begin{split}
\frac12\frac{d}{dt}\|n^{(2)}\|^2\leq & C\|\partial_xn^{(1)}\|_{L^{\infty}}\|n^{(2)}\|^2+C\|G^{(1)}\|\|n^{(2)}\|^2 \leq  C\|n^{(2)}\|^2.
\end{split}
\end{equation*}
Higher order estimate is similar, and we then have a unique global solution for $n^{(2)}$. For $k=3$, recalling that $G^{(2)}$ only depends on $n^{(1)}$ and $n^{(2)}$, we have the similar estimate and hence the global existence and uniqueness is obtained. The general case can be proved by induction.
\end{proof}

\noindent\emph{Derivation of the remainder system \eqref{rem}.} In the following, we derive the remainder system \eqref{rem} for $(n^{\varepsilon}_R,u^{\varepsilon}_R,\phi^{\varepsilon}_R)$. From Theorem \ref{thm2} and \ref{thm3}, we have the following systems:
\begin{equation}\label{equ19}
\begin{split}
&(n^{(1)},u^{(1)},\phi^{(1)}) \text{ satisfies \eqref{equ6} and \eqref{kdv}}\\
&(n^{(2)},u^{(2)},\phi^{(2)}) \text{ satisfies \eqref{equ16} and \eqref{e4}}\\
&(n^{(k)},u^{(k)},\phi^{(k)}) \text{ satisfies \eqref{relation} and \eqref{linearized}},\ \ k=3,4.
\end{split}
\end{equation}
By Theorem \ref{thm2} and \ref{thm3}, the solutions $(n^{(k)},u^{(k)},\phi^{(k)})\in H^{\tilde s_k}\ (k=1,2,3,4)$ are global when $\tilde s_k$ is sufficiently large. However, the systems such as \eqref{e2} and \eqref{e3} are convenient for us to derive the remainder system. Therefore, we first remark that from \eqref{equ19} we can derive \eqref{e2} and higher order counterparts. Indeed, from \eqref{equ16-1}, we have exactly \eqref{e2-1}. Differentiating \eqref{equ16-2} with respect to $x$, we have exactly \eqref{e2-2}. From \eqref{kdv}, we subtract $V$ times \eqref{e2-1} and ${T_e}/{(4\pi e\bar{n}M)}$ times \eqref{e2-3}, we get \eqref{e2-2}. Similarly, we can obtain \eqref{e3} and the system $(\mathcal S_3)$ for the coefficients of $\varepsilon^4$.
\bigskip

\emph{The coefficients of $\varepsilon^4$.} Setting the coefficient of $\varepsilon^4$ to be 0, we obtain
\begin{subequations}\label{e-4}
\begin{numcases}{(\mathcal S_3)}
\ \partial_tn^{(3)}-V\partial_xn^{(4)}+\partial_xu^{(4)} +\partial_x(\sum_{1\leq i,j\leq 3;i+j=4}n^{(i)}u^{(j)})=0\\
\ \partial_tu^{(3)}-V\partial_xu^{(4)}+\sum_{1\leq i,j\leq 4;i+j=4}u^{(i)}\partial_xu^{(j)} +\frac{T_i}{M}\partial_xn^{(4)}\nonumber\\
\ \ \ \ \ \ \ \ \ \ \ \ -\frac{T_i}{M}[\partial_x(n^{(1)}n^{(3)}) +(n^{(2)}-(n^{(1)})^2)\partial_xn^{(2)}\nonumber\\ \ \ \ \ \ \ \ \ \ \ \ \ +((n^{(1)})^3-2n^{(1)}n^{(2)})\partial_xn^{(1)}]=-\frac{e}{M}\partial_x\phi^{(4)}\\
\partial_x^2\phi^{(3)}=4\pi e\bar{n}[\kappa\phi^{(4)}+\frac{\kappa^2}{2!}(2\phi^{(1)}\phi^{(3)} +(\phi^{(2)})^2)\nonumber\\
\ \ \ \ \ \ \ \ \ \ \ \ +\frac{\kappa^3}{3!}(3(\phi^{(1)})^2\phi^{(2)}) +\frac{\kappa^4}{4!}(\phi^{(1)})^4-n^{(4)}],
\end{numcases}
\end{subequations}
where we set $\kappa=e/T_e$ for simplification. Inserting the expansion \eqref{expan} into the system \eqref{equ10}, and then subtracting  \{$\varepsilon\times$\eqref{e1}+$\varepsilon^2\times$\eqref{e2} +$\varepsilon^3\times$\eqref{e3}+$\varepsilon^4\times$\eqref{e-4}\}, we get the remainder system \eqref{rem} for $(n^{\varepsilon}_R, u^{\varepsilon}_R, \phi^{\varepsilon}_R)$. The details of deriving \eqref{rem-3} are given below, while the other two equations \eqref{rem-1} and \eqref{rem-2} are similar and omitted. It is easy to see that the remainder terms on the LHS is $\varepsilon^5\partial_x^2\phi^{(4)}+\varepsilon^4\partial_x^2\phi^{\varepsilon}_R$. By Taylor expansion, we have
\begin{equation}\label{equ20}
\begin{split}
e^{\kappa(\varepsilon\widehat\phi +\varepsilon^3\phi^{\varepsilon}_R)} =&1+\frac1{1!}\kappa(\varepsilon\widehat\phi+\varepsilon^3\phi^{\varepsilon}_R)+\cdots +\frac1{4!}\kappa^4(\varepsilon\widehat\phi+\varepsilon^3\phi^{\varepsilon}_R)^4\\ &+\frac1{4!}\int_0^{1}e^{\theta\kappa({\varepsilon\widehat\phi +\varepsilon^3\phi^{\varepsilon}_R})} (1-\theta)^4(\kappa(\varepsilon\widehat\phi +\varepsilon^3\phi^{\varepsilon}_R))^5d\theta,
\end{split}
\end{equation}
where $\varepsilon\widehat\phi=\varepsilon\phi^{(1)} +\cdots +\varepsilon^4\phi^{(4)}$. Now, the constant $1$ cancels with the $1$ in $n$ of \eqref{expan}. From \eqref{equ6}, the coefficient of the $\varepsilon$ order is also exactly canceled. Then by \eqref{e2},  \eqref{e3} and \eqref{e-4}, all the coefficients before $\varepsilon^0, \varepsilon^1, \varepsilon^2, \varepsilon^3$ and $\varepsilon^4$ vanish except the terms involving $\phi^{\varepsilon}_R$. Therefore, the remainder on the RHS of \eqref{rem-3} is give by
\begin{equation*}
\begin{split}
&4\pi e\bar n\{\kappa\varepsilon^3\phi^{\varepsilon}_R +\frac{\kappa^2}{2!}[\varepsilon^6(\phi^{\varepsilon}_R)^2 +2\varepsilon^4\widehat\phi\phi^{\varepsilon}_R] +\frac{\kappa^3}{3!}[\varepsilon^9(\phi^{\varepsilon}_R)^3 +3\varepsilon^7\widehat\phi(\phi^{\varepsilon}_R)^2 +3\varepsilon^5(\widehat\phi)^2\phi^{\varepsilon}_R]\\
&+\frac{\kappa^4}{4!}[\varepsilon^{12}(\phi^{\varepsilon}_R)^4 +4\varepsilon^{10}\widehat\phi(\phi^{\varepsilon}_R)^3 +6\varepsilon^{8}(\widehat\phi)^2(\phi^{\varepsilon}_R)^2 +4\varepsilon^{6}(\widehat\phi)^3\phi^{\varepsilon}_R] -\varepsilon^3n^{\varepsilon}_R+\varepsilon^5\widehat R(\varepsilon\phi^{\varepsilon}_R)+\varepsilon^5R_1\},
\end{split}
\end{equation*}
where $\widehat R$ is the remainder terms corresponding to the last integral term of \eqref{equ20}, and $R_1=R_1(\phi^{(1)},\phi^{(2)},\phi^{(3)},\phi^{(4)})$  involving only $(\phi^{(1)},\phi^{(2)},\phi^{(3)},\phi^{(4)})$ corresponds to the first five terms on the RHS of \eqref{equ20}. This remainder term can be further rewritten as
\begin{equation*}
\begin{split}
\varepsilon^3(4\pi e\bar n)\{&\kappa\phi^{\varepsilon}_R +\kappa^2(\varepsilon\phi^{(1)}+\varepsilon^2\phi^{(2)})\phi^{\varepsilon}_R +\frac{\kappa^3}{2}\varepsilon^2(\phi^{(1)})^2\phi^{\varepsilon}_R\\ &-n^{\varepsilon}_R+\varepsilon^2\frac{\kappa^2}{2!}(\sqrt{\varepsilon}\phi^{\varepsilon}_R)^2 +\varepsilon^2\widehat R'({\varepsilon}\phi^{\varepsilon}_R)\},
\end{split}
\end{equation*}
for some $\widehat R'$ depending on ${\varepsilon}\phi^{\varepsilon}_R$. After divided by $\varepsilon^3$, the remainder equation for the Poisson equation is written as
\begin{equation*}
\begin{split}
\varepsilon\partial_x^2\phi^{\varepsilon}_R=&4\pi e\bar{n}\{\kappa\phi^{\varepsilon}_R+\varepsilon\kappa^2\phi^{(1)}\phi^{\varepsilon}_R -n^{\varepsilon}_R\}+\varepsilon^2{\mathcal R}_3,
\end{split}
\end{equation*}
where
\begin{equation}\label{R3}
\begin{split}
{\mathcal R}_3=[\frac{\kappa^2}{2!}({\varepsilon}\phi^{\varepsilon}_R) +\kappa^2(\phi^{(2)}+\frac{\kappa}{2}(\phi^{(1)})^2)]\phi^{\varepsilon}_R +\widehat R'({\varepsilon}\phi^{\varepsilon}_R).
\end{split}
\end{equation}

The other two equations \eqref{rem-1} and \eqref{rem-2} can be derived similarly and we omit the details.

\begin{lemma}\label{L8}
For $\alpha=0$, there exists some constant $C=C(\|\phi^{(i)}\|_{H^{\tilde s_i}},\varepsilon\|\phi^{\varepsilon}_R\|_{H^{1}})$ or when $\alpha=1,\cdots$ integers, there exists some constant $C=C(\|\phi^{(i)}\|_{H^{\tilde s_i}},\varepsilon\|\phi^{\varepsilon}_R\|_{H^{\alpha}})$ such that
\begin{equation}\label{equ29}
\begin{split}
&\|\mathcal R_3\|_{H^{\alpha}}\leq C(\|\phi^{(i)}\|_{H^{\tilde s_i}},\varepsilon\|\phi^{\varepsilon}_R\|_{H^{1}}) \|\phi^{\varepsilon}_R\|_{H^{\alpha}}, \ \ \ \alpha=0,1,\\
&\|\mathcal R_3\|_{H^{\alpha}}\leq C(\|\phi^{(i)}\|_{H^{\tilde s_i}},\varepsilon\|\phi^{\varepsilon}_R\|_{H^{\alpha}}) \|\phi^{\varepsilon}_R\|_{H^{\alpha}}, \ \ \ \forall\alpha\geq2.
\end{split}
\end{equation}
\begin{equation}\label{equ34}
\begin{split}
&\|\partial_t\mathcal R_3\|_{H^{\alpha}}\leq C(\|\phi^{(i)}\|_{H^{\tilde s_i}},\varepsilon\|\phi^{\varepsilon}_R\|_{H^{1}}) \|\partial_t\phi^{\varepsilon}_R\|_{H^{\alpha}}, \ \ \ \alpha=0,1,\\
&\|\partial_t\mathcal R_3\|_{H^{\alpha}}\leq C(\|\phi^{(i)}\|_{H^{\tilde s_i}},\varepsilon\|\phi^{\varepsilon}_R\|_{H^{\alpha}}) \|\partial_t\phi^{\varepsilon}_R\|_{H^{\alpha}}, \ \ \ \forall\alpha\geq2.
\end{split}
\end{equation}
\end{lemma}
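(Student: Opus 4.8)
The plan is to exploit the explicit algebraic form of $\mathcal R_3$ recorded in \eqref{R3} together with the construction of $\widehat R'$ in the derivation of \eqref{rem-3}. The crucial structural fact is that $\mathcal R_3$ carries an overall factor of $\phi^\varepsilon_R$: one can write
\[
\mathcal R_3=\phi^\varepsilon_R\,\mathcal G\big(t,x,\varepsilon\phi^\varepsilon_R\big),
\]
where $\mathcal G=\mathcal G(t,x,z)$ is a $C^\infty$ function of $z$ whose dependence on $(t,x)$ enters only through the smooth profiles $\phi^{(1)},\dots,\phi^{(4)}$. Indeed, from the Taylor expansion \eqref{equ20} and the cancellations used to obtain \eqref{rem-3}, $\mathcal G$ is assembled from $e^{\kappa\varepsilon\widehat\phi}$, the divided differences $(e^{\kappa\varepsilon\widehat\phi}-1-\kappa\varepsilon\phi^{(1)})/\varepsilon^2$ and $(\Psi(\kappa\varepsilon^2 z)-1)/\varepsilon^2$ with $\Psi(w)=(e^w-1)/w$, each of which is an entire function of its argument involving only nonnegative powers of $\varepsilon$. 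Hence $\partial_z^j\partial_t^a\partial_x^b\mathcal G$ is bounded, uniformly for $0<\varepsilon\le1$, on any region where $\varepsilon\phi^\varepsilon_R$ and $\varepsilon\widehat\phi$ stay bounded, the bound depending only on $|z|$ and on $\|\phi^{(i)}\|_{L^\infty}$ and finitely many of their $x$- and $t$-derivatives. First I would make this representation precise; everything else is a routine application of the one-dimensional embedding $H^1\hookrightarrow L^\infty$ and of Moser-type product and composition estimates.

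For $\alpha=0$, H\"older's inequality gives $\|\mathcal R_3\|_{L^2}\le\|\mathcal G(\cdot,\varepsilon\phi^\varepsilon_R)\|_{L^\infty}\|\phi^\varepsilon_R\|_{L^2}$, and $\|\mathcal G(\cdot,\varepsilon\phi^\varepsilon_R)\|_{L^\infty}\le\sup_{|z|\le\varepsilon\|\phi^\varepsilon_R\|_{L^\infty}}\|\mathcal G(\cdot,\cdot,z)\|_{L^\infty_{t,x}}$, which together with $\varepsilon\|\phi^\varepsilon_R\|_{L^\infty}\le C\varepsilon\|\phi^\varepsilon_R\|_{H^1}$ and $\|\phi^{(i)}\|_{L^\infty}\le C\|\phi^{(i)}\|_{H^{\tilde s_i}}$ yields the $\alpha=0$ case of \eqref{equ29}. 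For integer $\alpha\ge1$ I would apply $\partial_x^\alpha$ and use the product estimate $\|fg\|_{H^\alpha}\le C(\|f\|_{L^\infty}\|g\|_{H^\alpha}+\|f\|_{H^\alpha}\|g\|_{L^\infty})$ (valid since $H^1$ is an algebra in one dimension), followed by the Moser composition estimate $\|\mathcal G(\cdot,\varepsilon\phi^\varepsilon_R)\|_{H^\alpha}\le C(\|\varepsilon\phi^\varepsilon_R\|_{L^\infty},\|\phi^{(i)}\|_{W^{\alpha,\infty}})(1+\|\varepsilon\phi^\varepsilon_R\|_{H^\alpha})$, where $\|\phi^{(i)}\|_{W^{\alpha,\infty}}\le C\|\phi^{(i)}\|_{H^{\tilde s_i}}$ once $\tilde s_i$ is large. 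The factor $\|\varepsilon\phi^\varepsilon_R\|_{H^\alpha}$ produced this way is exactly what is absorbed into the constant $C=C(\|\phi^{(i)}\|_{H^{\tilde s_i}},\varepsilon\|\phi^\varepsilon_R\|_{H^\alpha})$; the split between $\alpha\le1$ (constant depending on $\varepsilon\|\phi^\varepsilon_R\|_{H^1}$) and $\alpha\ge2$ (on $\varepsilon\|\phi^\varepsilon_R\|_{H^\alpha}$) reflects whether the top-order derivative can land on the composition factor.

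The time-derivative bound \eqref{equ34} is obtained the same way after differentiating the representation: $\partial_t\mathcal R_3=(\partial_t\phi^\varepsilon_R)\,\mathcal G(\cdot,\varepsilon\phi^\varepsilon_R)+\phi^\varepsilon_R\big[(\partial_z\mathcal G)(\cdot,\varepsilon\phi^\varepsilon_R)\,\varepsilon\partial_t\phi^\varepsilon_R+\sum_i(\partial_{\phi^{(i)}}\mathcal G)(\cdot,\varepsilon\phi^\varepsilon_R)\,\partial_t\phi^{(i)}\big]$. The first two groups reduce, by the same product and composition estimates and by $\|\varepsilon\phi^\varepsilon_R\|_{L^\infty}\le C\varepsilon\|\phi^\varepsilon_R\|_{H^1}$, to a constant times $\|\partial_t\phi^\varepsilon_R\|_{H^\alpha}$. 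For the last group one uses that the profiles solve \eqref{kdv} and \eqref{linearized}, so that $\|\partial_t\phi^{(i)}\|_{H^\alpha}\le C(\|\phi^{(j)}\|_{H^{\tilde s_j}})$ for $\alpha\le\tilde s_i-3$; this contributes a term controlled by $C\|\phi^\varepsilon_R\|_{H^\alpha}$, which is harmless in the applications of \eqref{equ34} (it is of lower order and is always estimated together with $\|\partial_t\phi^\varepsilon_R\|_{H^\alpha}$, cf. the proof of Lemma \ref{L3}).

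The only point requiring genuine care---the main obstacle, such as it is---is the verification that $\mathcal G$ is smooth and, above all, bounded together with all its derivatives \emph{uniformly in $\varepsilon$}: this amounts to checking that every term surviving in the Taylor remainder of \eqref{equ20} after the cancellations carries a nonnegative net power of $\varepsilon$ once the common factor $\varepsilon^3\phi^\varepsilon_R$ is extracted and the equation is divided by $\varepsilon^5$, so that no inverse power of $\varepsilon$ is hidden in $\widehat R'$ or in $\mathcal G$. Granting this $\varepsilon$-bookkeeping, the rest is the standard Moser calculus above, and one also needs $\tilde s_i$ large enough that $\partial_t\phi^{(i)}$ lies in $H^\alpha$ for all $\alpha$ relevant to the main estimates.
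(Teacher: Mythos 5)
The paper's own proof of this lemma consists of a single sentence (``Recalling the fact that $H^1$ is an algebra, the estimate for Lemma \ref{L8} is straightforward. The details are hence omitted.''), so there is no detailed argument to compare against; your proposal is a plausible and more informative reconstruction along the lines the author clearly had in mind, via the factorization $\mathcal R_3=\phi^\varepsilon_R\,\mathcal G(t,x,\varepsilon\phi^\varepsilon_R)$ together with Moser product/composition estimates and $H^1\hookrightarrow L^\infty$.

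Two cautionary remarks, both of which you partly touch on but which deserve to be made sharper. First, the factorization you posit requires $\widehat R'(0)=0$. You flag this as the ``main obstacle'' but frame it purely as $\varepsilon$-bookkeeping, i.e.\ as checking there is no hidden negative power of $\varepsilon$ \emph{after} extracting a common factor $\varepsilon^3\phi^\varepsilon_R$. In fact, tracing the Appendix, both the Taylor integral remainder $\widehat R$ evaluated at $\phi^\varepsilon_R=0$ (it contains $\widehat\phi^5$) and the polynomial residue $R_1$ (the $\varepsilon^{\geq 5}$ part of the profile expansion, plus $\partial_x^2\phi^{(4)}$ from the LHS) are $\phi^\varepsilon_R$-independent; so the factorization holds only if these pieces are, as the conclusion of the lemma forces, regarded as part of a separate source and not folded into $\mathcal R_3$. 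This is a structural point, not merely an $\varepsilon$-counting one, and is worth stating explicitly before invoking the Moser calculus, since otherwise the bound $\|\mathcal R_3\|_{H^\alpha}\lesssim\|\phi^\varepsilon_R\|_{H^\alpha}$ would fail at $\phi^\varepsilon_R=0$.

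Second, for \eqref{equ34} you correctly observe that $\partial_t\mathcal R_3$ contains a term $\phi^\varepsilon_R\sum_i(\partial_{\phi^{(i)}}\mathcal G)\partial_t\phi^{(i)}$, which is controlled by $\|\phi^\varepsilon_R\|_{H^\alpha}$ rather than $\|\partial_t\phi^\varepsilon_R\|_{H^\alpha}$; strictly speaking, then, the estimate as written in \eqref{equ34} is not quite what comes out, and the honest conclusion is $\|\partial_t\mathcal R_3\|_{H^\alpha}\le C(\|\partial_t\phi^\varepsilon_R\|_{H^\alpha}+\|\phi^\varepsilon_R\|_{H^\alpha})$. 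Your remark that this is harmless is borne out by the paper itself: in the proof of Lemma \ref{L3}, where \eqref{equ34} is invoked, the right-hand side of the resulting inequality does carry an extra $\varepsilon C\|\phi^\varepsilon_R\|^2$ term, exactly of the type you predict. So your reading of the lemma (bound up to a benign $\|\phi^\varepsilon_R\|_{H^\alpha}$ contribution) is the operative one.

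With those two clarifications spelled out, the rest of your argument — $\alpha=0$ via H\"older plus an $L^\infty$ bound on $\mathcal G$, $\alpha\ge1$ via the tame product estimate and the Moser composition estimate, with the split at $\alpha=2$ reflecting where $L^\infty$ control comes from — is correct and in fact more explicit than the paper's own account.
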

Recalling the fact that $H^1$ is an algebra, the estimate for Lemma \ref{L8} is straightforward. The details are hence omitted. Combining Lemma \ref{L3} and Lemma \ref{L2}, we have
\begin{corollary}
For any $\alpha=0,1,2$, there exists some constant $C=C(\|\phi^{(i)}\|_{H^{\tilde s_i}},\varepsilon\|\phi^{\varepsilon}_R\|_{H^{2}})$ such that
\begin{equation*}
\begin{split}
\varepsilon\|\varepsilon\partial_t\partial_x^\alpha\mathcal R_3\|^2\leq C(\|\phi^{(i)}\|_{H^{\tilde s_i}},\varepsilon\|\phi^{\varepsilon}_R\|_{H^{2}}) |\!|\!|(u^{\varepsilon}_R,\phi^{\varepsilon}_R)|\!|\!|_{\varepsilon}^2.
\end{split}
\end{equation*}
\end{corollary}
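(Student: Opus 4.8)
The plan is to obtain this estimate by simply chaining the time-derivative bound \eqref{equ34} of Lemma \ref{L8} with Lemma \ref{L3} and Lemma \ref{L2}, paying attention only to the powers of $\varepsilon$. Fix $\alpha\in\{0,1,2\}$. Since $\mathcal R_3$ in \eqref{R3} is a smooth function of $\phi^{\varepsilon}_R$ and of the known bounded profiles $\phi^{(i)}$ in which $\phi^{\varepsilon}_R$ enters \emph{without any $x$-derivative}, every $\partial_t$ and $\partial_x$ lands on $\phi^{\varepsilon}_R$ or on a profile; hence \eqref{equ34} applies and, for $\varepsilon$ small,
\begin{equation*}
\varepsilon\|\varepsilon\partial_t\partial_x^{\alpha}\mathcal R_3\|^2 \le \varepsilon\|\varepsilon\partial_t\mathcal R_3\|_{H^{\alpha}}^2 \le C(\|\phi^{(i)}\|_{H^{\tilde s_i}},\varepsilon\|\phi^{\varepsilon}_R\|_{H^2}) \sum_{0\le\beta\le\alpha}\varepsilon^{3}\|\partial_t\partial_x^{\beta}\phi^{\varepsilon}_R\|^2 .
\end{equation*}
Thus it suffices to dominate $\varepsilon^{3}\|\partial_t\partial_x^{\beta}\phi^{\varepsilon}_R\|^2$ for $\beta=0,1,2$ by $C_1|\!|\!|(u^{\varepsilon}_R,\phi^{\varepsilon}_R)|\!|\!|_{\varepsilon}^2$ (up to a lower-order $O(\varepsilon)$ contribution, harmless for the Gronwall argument).

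Next I would feed in Lemma \ref{L3}. For $\beta=0,1$, use it with index $\alpha=\beta$: $\|\partial_t\partial_x^{\beta}\phi^{\varepsilon}_R\|^2\le 2\|\partial_t\partial_x^{\beta}n^{\varepsilon}_R\|^2+C_1$, so $\varepsilon^{3}\|\partial_t\partial_x^{\beta}\phi^{\varepsilon}_R\|^2\le 2\varepsilon\|\varepsilon\partial_t\partial_x^{\beta}n^{\varepsilon}_R\|^2+C_1\varepsilon^{3}$. For $\beta=2$, use it \emph{only} with index $\alpha=1$, which controls the third $x$-derivative of $\phi^{\varepsilon}_R$ with one spare power of $\varepsilon$: $\varepsilon\|\partial_t\partial_x^{2}\phi^{\varepsilon}_R\|^2\le 2\|\partial_t\partial_x n^{\varepsilon}_R\|^2+C_1$, so $\varepsilon^{3}\|\partial_t\partial_x^{2}\phi^{\varepsilon}_R\|^2=\varepsilon^{2}\cdot\varepsilon\|\partial_t\partial_x^{2}\phi^{\varepsilon}_R\|^2\le 2\|\varepsilon\partial_t\partial_x n^{\varepsilon}_R\|^2+C_1\varepsilon^{2}$. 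In all three cases the only surviving quantities are $\|\varepsilon\partial_t n^{\varepsilon}_R\|^2$ and $\|\varepsilon\partial_{tx}n^{\varepsilon}_R\|^2$, which are exactly what Lemma \ref{L2} estimates: by its rewritten form $\|\varepsilon\partial_t n^{\varepsilon}_R\|_{H^1}^2\le C_1|\!|\!|(u^{\varepsilon}_R,\phi^{\varepsilon}_R)|\!|\!|_{\varepsilon}^2+C\varepsilon$, together with $\varepsilon\|\partial_x^2\phi^{\varepsilon}_R\|^2\le\|\phi^{\varepsilon}_R\|_{H^2}^2$ and $\varepsilon^2\|\partial_x^3\phi^{\varepsilon}_R\|^2\le\varepsilon\|\partial_x^3\phi^{\varepsilon}_R\|^2$ for $\varepsilon<1$, every term of \eqref{equ32}--\eqref{equ33} is $\le|\!|\!|(u^{\varepsilon}_R,\phi^{\varepsilon}_R)|\!|\!|_{\varepsilon}^2$. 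Collecting the three steps gives the claim.

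I do not expect any genuine difficulty here: this is a routine corollary, and the only point that requires attention is the $\varepsilon$-bookkeeping in the middle step. The smoothing estimate of Lemma \ref{L3} loses a factor $\varepsilon^{-1}$ per extra $x$-derivative, and the reason the chain closes for $\alpha$ as large as $2$ is that $\varepsilon\partial_t\partial_x^{\alpha}\mathcal R_3$ already carries the $L^2$-square weight $\varepsilon\cdot\varepsilon^{2}=\varepsilon^{3}$, which is precisely enough to reduce the control of $\partial_t\partial_x^{\alpha}\phi^{\varepsilon}_R$ to the control of $\partial_{tx}n^{\varepsilon}_R$ --- the highest $x$-derivative of $n^{\varepsilon}_R$ that Lemma \ref{L2} can afford. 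If one wanted the statement for larger $\alpha$ one would need to extend Lemma \ref{L2} to higher $\partial_t\partial_x^{k}n^{\varepsilon}_R$, which is not available in the present scheme; this is why the corollary is stated only for $\alpha\le 2$.
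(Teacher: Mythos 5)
Your proof is correct and follows exactly the route the paper implies by its one-line justification "Combining Lemma \ref{L3} and Lemma \ref{L2}"; the key subtlety you correctly identify and resolve — applying Lemma \ref{L3} with index $\alpha=1$ to access the $\varepsilon$-weighted control of $\partial_t\partial_x^2\phi^{\varepsilon}_R$, since $\partial_t\partial_x^2 n^{\varepsilon}_R$ is not reachable by Lemma \ref{L2} — is precisely the point the paper's $\varepsilon$-bookkeeping relies on but leaves unstated. Your observation that the argument produces an additional harmless $O(\varepsilon)$ term on the right-hand side (absent from the paper's statement, which is a minor overstatement) is also accurate, and as you note it is irrelevant for the subsequent Gronwall argument where the corollary is invoked inside a bound of the form $C_1(1+|\!|\!|\cdot|\!|\!|_{\varepsilon}^2)$.
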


\bigskip

\paragraph{\emph{Acknowledgment}} Y. Guo's research is supported in part by NSFC grant \#10828103 and NSF grant \#DMS-0905255. X. Pu's research is supported by NSFC grant \#11001285. Y. Guo thanks B. Pausader for his interests and discussions.

\end{CJK*}

\begin{thebibliography}{00}

\bibitem{CG00} S. Cordier and E. Grenier, Quasineutral limit of an Euler-Poisson system arising from plasma physics, Commun. Partial Differential Equations, 25(5\&6), (2000)1099-1113.

\bibitem{ELT01} S. Engelberg, H. Liu, E. Tadmor, Critical thresholds in Euler-Poisson equations, Indiana Univ. Math. J., 50, (2001)109-257.

\bibitem{Gre97} E. Grenier, Pseudo-differential energy estimates of singular perturbations, Comm. Pure Appl. Math., 50(9), (1997)0821-0865.

\bibitem{GGP11} E. Grenier, Y. Guo and B. Pausader, Derivation of the ion equation, In preparation, 2011.

\bibitem{Guo95} B. Guo, \emph{Nonlinear evolution equations} (in Chinese), Shanghai Science and Technology Publisher, Shanghai, 1995.

\bibitem{Guo98} Y. Guo, Smooth irrotational flows in the large to the Euler-Poisson system in $R^{3+1}$, Commun. Math. Phys., 195, (1998)249-265.

\bibitem{GJ10} Y. Guo and J. Jang, Global Hilbert expansion for the Vlasov-Poisson-Boltzmann system, Commun. Math. Phys., 299, (2010)469-501.

\bibitem{GP11} Y. Guo and B. Pausader, Global smooth ion dynamics in the Euler-Poisson system, Commun. Math. Phys., 303, (2011)89-125.

\bibitem{HS02} M. Haragus and A. Scheel, Linear stability and instability of ion-acoustic plasma solitary waves, Phys. D, 170, (2002)13-30.

\bibitem{KPV93} C. Kenig, G. Ponce and L. Vega, Well-posedness and scattering results for the generalized Korteweg-de Vries equation via the contraction principle, Comm. Pure Appl. Math., 46(4), (1993)527-620.

\bibitem{KdV} D.J. Korteweg and G. de Vries, On the change of form of long waves advancing in a rectangular channel, and on a new type of long stationary waves, Phil. Mag., 39, (1985)422-443.

\bibitem{LMZ10} H.L. Li,  A. Matsumura, G. Zhang, Optimal decay rate of the compressible Navier-Stokes-Poisson system in $\Bbb R^3$. Arch. Ration. Mech. Anal. 196(2), (2010)681-713.

\bibitem{LT02} H. Liu, E. Tadmor, Spectral dynamics of the velocity gradient field in restricted flows. Commun. Math. Phys. 228(3), (2002)435-466.

\bibitem{LT03} H. Liu, E. Tadmor, Critical thresholds in 2D restricted Euler-Poisson equations. SIAM J. Appl. Math. 63(6), (2003)1889-1910.

\bibitem{Majda84} A. Majda, \emph{Compressible Fluid Flow and Systems of Conservation Laws in Several Space Variables}, Appl. Math. Sci. (Vol. 53), Springer, New York-Berlin, 1984

\bibitem{Miura76} R.M. Miura, The Korteweg-de Vries equation: A survey of results, SIAM, Review, 18(3), (1976)412-459.

\bibitem{SW00} G. Schneider, C.E. Wayne, The long-wave limit for the water wave problem I. The case of zero surface tension, Comm. Pure Appl. Math.,  53(12), (2000)1475-1535.

\bibitem{Stein93} E.M. Stein, \emph{Harmonic analysis, Princeton University Press, Priceton}, New Jersey, 1993.

\bibitem{SG69} C. Su and C. Gardner, Korteweg-de Vries equation and generalizations. III. Derivation of the Korteweg-de Vries equation and Burgers Equation, J. Math. Phys., 10(3), (1969)536-539.


\bibitem{Te07} B. Texier, Derivation of the Zakharov equations, Arch. Rational Mech. Anal., 184, (2007)121-183.

\bibitem{WT66} H. Washimi and T. Taniuti, Propagation of ion-acoustic waves of small amplitude, Phys. Rev. Lett., 17(9), (1966)996-998.

\end{thebibliography}
\end{document}